\providecommand{\tabularnewline}{\\}
\numberwithin{equation}{section}
\numberwithin{figure}{section}
\theoremstyle{plain}
\newtheorem{thm}{\protect\theoremname}
  \theoremstyle{plain}
  \newtheorem{lem}[thm]{\protect\lemmaname}
  \theoremstyle{plain}
  \newtheorem{prop}[thm]{\protect\propositionname}
  \theoremstyle{remark}
  \newtheorem{rem}[thm]{\protect\remarkname}
  \theoremstyle{definition}
  \theoremstyle{plain}
  \newtheorem{cor}[thm]{\protect\corollaryname}
  \newtheorem{thmx}{Theorem}
  \providecommand{\corollaryname}{Corollary}
  \providecommand{\definitionname}{Definition}
  \providecommand{\lemmaname}{Lemma}
  \providecommand{\propositionname}{Proposition}
  \providecommand{\remarkname}{Remark}
\providecommand{\theoremname}{Theorem}
\begin{document}

\title[The Bolza curve and some orbifold ball quotient surfaces]{The Bolza curve and some orbifold  ball quotient surfaces}

\addtolength{\textwidth}{10mm}
\addtolength{\hoffset}{0mm} 
\addtolength{\textheight}{5mm}
\addtolength{\voffset}{-0mm} 

\global\long\global\long\def\Alb{{\rm Alb}}
 \global\long\global\long\def\Jac{{\rm Jac}}
\global\long\global\long\def\Disc{{\rm Disc}}

\global\long\global\long\def\Tr{{\rm Tr}}
 \global\long\global\long\def\NS{{\rm NS}}
\global\long\global\long\def\PicVar{{\rm PicVar}}
\global\long\global\long\def\Pic{{\rm Pic}}
\global\long\global\long\def\Br{{\rm Br}}
 \global\long\global\long\def\Pr{{\rm Pr}}

\global\long\global\long\def\Hom{{\rm Hom}}
 \global\long\global\long\def\End{{\rm End}}
 \global\long\global\long\def\aut{{\rm Aut}}
 \global\long\global\long\def\NS{{\rm NS}}
 \global\long\global\long\def\SSm{{\rm S}}
 \global\long\global\long\def\psl{{\rm PSL}}
 \global\long\global\long\def\CC{\mathbb{C}}
 \global\long\global\long\def\BB{\mathbb{B}}
 \global\long\global\long\def\PP{\mathbb{P}}
 \global\long\global\long\def\QQ{\mathbb{Q}}
 \global\long\global\long\def\RR{\mathbb{R}}
 \global\long\global\long\def\FF{\mathbb{F}}
 \global\long\global\long\def\DD{\mathbb{D}}
 \global\long\global\long\def\NN{\mathbb{N}}
 \global\long\global\long\def\ZZ{\mathbb{Z}}
 \global\long\global\long\def\HH{\mathbb{H}}
 \global\long\global\long\def\Gal{{\rm Gal}}
 \global\long\global\long\def\OO{\mathcal{O}}
 \global\long\global\long\def\pP{\mathfrak{p}}
 \global\long\global\long\def\pPP{\mathfrak{P}}
 \global\long\global\long\def\qQ{\mathfrak{q}}
 \global\long\global\long\def\mm{\mathcal{M}}
 \global\long\global\long\def\aaa{\mathfrak{a}}
\global\long\def\a{\alpha}
 \global\long\def\b{\beta}
 \global\long\def\d{\delta}
 \global\long\def\D{\Delta}
\global\long\def\L{\Lambda}
 \global\long\def\g{\gamma}
 \global\long\def\G{\Gamma}
 \global\long\def\d{\delta}
 \global\long\def\D{\Delta}
 \global\long\def\e{\varepsilon}
 \global\long\def\k{\kappa}
 \global\long\def\l{\lambda}
 \global\long\def\m{\mu}
 \global\long\def\o{\omega}
 \global\long\def\p{\pi}
 \global\long\def\P{\Pi}
 \global\long\def\s{\sigma}
 \global\long\def\S{\Sigma}
 \global\long\def\t{\theta}
 \global\long\def\T{\Theta}
 \global\long\def\f{\varphi}
 \global\long\def\deg{{\rm deg}}
 \global\long\def\det{{\rm det}}
 \global\long\def\dps{{\displaystyle }}
 \global\long\def\ker{{\rm Ker\,}}
 \global\long\def\im{{\rm Im\,}}
 \global\long\def\rk{{\rm rk\,}}
 \global\long\def\car{{\rm car}}
\global\long\def\fix{{\rm Fix( }}
 \global\long\def\card{{\rm Card\  }}
 \global\long\def\codim{{\rm codim\,}}
 \global\long\def\coker{{\rm Coker\,}}
 \global\long\def\mod{{\rm mod }}
 \global\long\def\pgcd{{\rm pgcd}}
 \global\long\def\ppcm{{\rm ppcm}}
 \global\long\def\la{\langle}
 \global\long\def\ra{\rangle}

\author{Vincent Koziarz, Carlos Rito, Xavier Roulleau}
\begin{abstract}
We study Deraux's non-arithmetic orbifold ball quotient surfaces obtained as
 birational transformations of a quotient $X$ of a particular Abelian surface $A$.
 Using the fact that $A$ is the Jacobian of the Bolza genus $2$ curve, 
we identify $X$ as the weighted projective plane $\PP(1,3,8)$.
We compute the equation of the mirror $M$ of the orbifold ball quotient $(X,M)$
and by taking the quotient by an involution,  we obtain an orbifold ball quotient surface with mirror 
birational to an interesting configuration of plane curves of degrees $1,2$ and $3$. 
We also exhibit an arrangement of four conics in the plane which provides the 
above-mentioned ball quotient orbifold surfaces. \newline $2010$ MSC: 22E40 (14L30 20H15 14J26)
\newline Key words: Ball quotient surfaces, Lattices in $PU(2,1)$, Orbifolds
\end{abstract}

\maketitle

\section{Introduction.}
Chern numbers of smooth complex surfaces of general type $X$ satisfy the Bogomolov-Miyaoka-Yau inequality $c_1 ^2(X) \leq 3c_2(X)$.
Surfaces for which the equality 
 is reached
are ball quotient surfaces: there exists a cocompact torsion-free lattice $\Gamma$ in the automorphism
 group $PU(2,1)$ of the ball $B_2$ such that $X=B_2 /\Gamma$. 
This description of ball quotient surfaces by uniformisation is of transcendental nature, and  
in fact among ball-quotient surfaces, very few are constructed geometrically (e.g. by taking cyclic covers 
 of known surfaces or  by explicit equations of an embedding in a projective space).
 
 Among lattices in $PU(2,1)$, only $22$ commensurability classes are known to be non-arithmetic. The first examples of such lattices were given by Mostow and Deligne-Mostow (see \cite{Mo}  and \cite{DM}), and
 recently Deraux, Parker and Paupert \cite{DPP,DPP2} constructed some more, sometimes related to an earlier work of Couwenberg, Heckman and Looijenga~\cite{CHL}. 
 
 Being rare and difficult to produce, these examples are particularly interesting and 
 one would like a geometric description of them.
 To do so, Deraux \cite{Deraux} studies the quotient  of the Abelian surface 
$A=E\times E$, where $E$ is the elliptic curve $E=\CC/\ZZ[i\sqrt{2}]$, by an order $48$ automorphism
group isomorphic to $GL_{2}(\FF_{3})$ that we will denote by $G_{48}$. 
The ramification locus of the quotient map $A\to A/G_{48}$ is the union of $12$ elliptic curves 
and two orbits of 
 isolated  fixed points. 
The images of these two orbits are singularities of type $A_2$ and $\frac{1}{8}(1,3)$, respectively.

Then Deraux proves that (on some birational transforms) 
the 1-dimensional branch locus $M_{48}$ 
of the quotient map $A\to A/G_{48}$ 
and the two singularities are the support of four 
ball-quotient orbifold structures, three of these corresponding to non-arithmetic lattices in $PU(2,1)$. 
Knowing the branch locus $M_{48}$ is therefore important for these ball-quotient orbifolds, since
it gives an explicit geometric description of the uniformisation maps from the ball to the surface.

Deraux also remarks in \cite{Deraux} that the invariants of $A/G_{48}$ and its singularities are the same 
as for the weighted projective plane $\PP(1,3,8)$ and, in analogy with cases in 
\cite{DM2} and  \cite{Deraux2}  where weighted projective planes appear in the context of ball-quotient surfaces, 
he asks whether the two surfaces are isomorphic.

In fact, the quotient $A/G_{48}$ can also be seen as a quotient 
$\CC^2/G$ where $G$ is an affine crystallographic complex reflection group. 
The Chevalley Theorem assert that if $G'$ is a finite reflection group acting on a space $V$ 
then the quotient $V/G'$ is a weighted projective space.
Using theta functions, Bernstein and Schwarzman \cite{BS}   
observed  that for many examples 
of affine crystallographic complex reflection groups $G$ acting on a space $V$,  
 the quotient $V/G$ is
 a also weighted projective space. Kaneko, Tokunaga and Yoshida \cite{KTY} worked 
 out some other cases, and it is believed 
 that this analog of the Chevalley Theorem always happens (see \cite{BS},  \cite[p. 17]{Dolg}), 
 although no general method is known (see also the presentation of the problem 
 given by Deraux in \cite{Deraux},  where more details can be found).  

In this paper we prove that indeed:

\begin{thmx}\label{thm:iso}
The surface $A/G_{48}$ is isomorphic to $\PP(1,3,8).$
\end{thmx}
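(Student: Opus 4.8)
\section*{Proof proposal}

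The plan is to realise $A/G_{48}$ as $\mathrm{Proj}$ of an explicit graded ring of invariants and to show that this ring is freely generated in degrees $1,3,8$. The first step is to pin down the pair $(A,G_{48})$ geometrically: the Bolza curve $C$ (say $y^{2}=x^{5}-x$) is the genus $2$ curve with maximal automorphism group, $\aut(C)\cong GL_{2}(\FF_{3})$, and $\Jac(C)\cong E\times E$. I would check that Deraux's pair $(A,G_{48})$ is $GL_{2}(\FF_{3})$-equivariantly isomorphic to $(\Jac(C),\aut(C))$ acting in the natural way, the hyperelliptic involution $\iota=-1$ being the centre of $GL_{2}(\FF_{3})$. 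Two features of $\aut(C)$ will drive the weights $1,3,8$: the octahedral symmetry of the six Weierstrass points $\{0,\infty,\pm1,\pm i\}$ produces an order-$8$ automorphism $\rho:(x,y)\mapsto(ix,\zeta_{8}y)$ with $\rho^{4}=\iota$, acting on $H^{0}(C,\Omega^{1})=\langle dx/y,\,x\,dx/y\rangle$ with eigenvalues $\zeta_{8},\zeta_{8}^{3}$, and there is an order-$3$ automorphism coming from a $3$-cycle of the octahedral axes. Their fixed points on $A$ therefore carry stabilisers $\mu_{8}$ and $\mu_{3}$ with tangent weights $(1,3)$ and $(1,2)$, matching exactly the local models $\frac{1}{8}(1,3)$ and $\frac{1}{3}(1,2)=A_{2}$ of the two singular points of $\PP(1,3,8)$.

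Next I would fix a $G_{48}$-linearisation of the symmetric principal polarisation $\mathcal O(\Theta)$ (with $\Theta\cong C$) and form $R=\bigoplus_{n\ge0}H^{0}(A,\mathcal O(n\Theta))^{G_{48}}$, where $\dim H^{0}(A,\mathcal O(n\Theta))=n^{2}$. Since $Y:=A/G_{48}$ is a normal projective surface with only the two quotient singularities above, one has $Y\cong\mathrm{Proj}\,R$, and the claim reduces to the algebraic statement $R\cong\CC[u,v,w]$ with $\deg(u,v,w)=(1,3,8)$. To identify $R$ I would compute its Hilbert--Poincar\'e series $P(t)=\sum_{n}(\dim R_{n})t^{n}$, either from the explicit $GL_{2}(\FF_{3})$-module structure of the spaces of theta functions or via the holomorphic Atiyah--Bott--Lefschetz fixed point formula summed over the conjugacy classes of $G_{48}$, and verify that $P(t)=\frac{1}{(1-t)(1-t^{3})(1-t^{8})}$. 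In parallel I would use the reduction to the Kummer surface: since $\iota=-1$ acts trivially on the even sections, $R$ is a ring of $S_{4}=G_{48}/\{\pm1\}$-invariants on the graded ring of $\mathrm{Km}(A)$, which makes the representation theory tractable and lets one write the generators $u,v,w$ in terms of octahedral invariants attached to the six Weierstrass points.

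With the Poincar\'e series in hand, I would exhibit explicit invariants $u\in R_{1}$, $v\in R_{3}$, $w\in R_{8}$ and check that they are algebraically independent, e.g.\ that the induced rational map $A\dashrightarrow\PP(1,3,8)$ is dominant and generically one-to-one on $G_{48}$-orbits. Algebraic independence of three elements of degrees $1,3,8$ together with the matching series $\frac{1}{(1-t)(1-t^{3})(1-t^{8})}$ forces $R=\CC[u,v,w]$, whence $Y\cong\PP(1,3,8)$. As a cross-check, the resulting morphism $A/G_{48}\to\PP(1,3,8)$ is finite and birational between normal surfaces and is a local isomorphism at the two singular points (the $\mu_{8}$- and $\mu_{3}$-fixed points mapping to the weight-$8$ and weight-$3$ vertices), so Zariski's main theorem gives the isomorphism.

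The main obstacle is twofold, and is precisely what the Jacobian-of-Bolza description is meant to overcome. First, one must control the characters by which $G_{48}$ acts on the one-dimensional spaces $H^{0}(A,\mathcal O(n\Theta))$ for small $n$: the existence of the degree-$1$ generator hinges on $G_{48}$ acting trivially on the line $H^{0}(A,\mathcal O(\Theta))$, and a careless identification of the relevant representation yields the wrong weights (for instance, were $H^{0}(A,\mathcal O(2\Theta))$ the permutation representation of $S_{4}$ one would be led to $\PP(1,2,3)$, whose singularities $A_{1}+A_{2}$ do not match), so the module structure must be determined honestly from the explicit model. Second, the Hilbert-series computation, whether via the theta-module decomposition or via the fixed point formula, is the technical heart; once it returns $\frac{1}{(1-t)(1-t^{3})(1-t^{8})}$ the remaining steps are formal.
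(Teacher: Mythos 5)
Your route is genuinely different from the paper's. The paper also begins by identifying $(A,G_{48})$ with the Jacobian of the Bolza curve together with the action induced by $\aut(\theta)$ (checking that this action has no global fixed point, hence is conjugate to Deraux's), but it then stays entirely within birational geometry: the image of the curve $\theta$ itself in $A/G_{48}$ has strict transform a $(-1)$-curve $C_0$ in the minimal resolution, meeting exactly one exceptional curve over each of the two singular points transversally; contracting $C_0$ and two further curves produces a rational surface with $K^2=2c_2=8$ containing a $(-3)$-curve, hence $\FF_3$, and the uniqueness of the reverse blow-up/blow-down process (since $\aut(\FF_n)$ has only two orbits) identifies $A/G_{48}$ with $\PP(1,3,8)$. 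What you propose instead is the Bernstein--Schwarzman/Kaneko--Tokunaga--Yoshida route through the graded ring of invariant theta functions. The formal skeleton is sound: if $R=\bigoplus_n H^0\bigl(A/G_{48},\OO(n\theta_{48})\bigr)$ has Poincar\'e series $1/\bigl((1-t)(1-t^3)(1-t^8)\bigr)$ and contains algebraically independent elements of degrees $1,3,8$, then $R=\CC[u,v,w]$ and ${\rm Proj}\,R\cong\PP(1,3,8)$; and your identification of the stabilizer weights $(1,3)$ and $(1,2)$ at the two special orbits is correct and consistent with Deraux's tables.

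The gap is that the decisive step is promised rather than performed, and it is precisely the step whose difficulty motivated the paper: the introduction records that this theta-function analogue of Chevalley's theorem is only ``believed'' to hold in general, that ``no general method is known'', and that Deraux posed the isomorphism as an open question because the invariant-ring computation for this crystallographic group had not been carried out. Concretely, three things remain unaddressed. First, the linearization: since elements of $G_{48}$ have nontrivial translation parts, lifting the action to $\OO_A(\Theta)$ goes through the theta (Heisenberg) group and a possible $2$-cocycle; one can instead define $R_n$ directly on the quotient, but then $\theta_{48}$ passes through \emph{both} singular points of $A/G_{48}$, so $\OO(n\theta_{48})$ is only reflexive (as it must be, to match $\OO(n)$ on $\PP(1,3,8)$), and the identification of $R_n$ with a space of invariant theta functions needs care there. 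Second, the Lefschetz/Molien computation is not a sum over isolated fixed points only: the twelve reflections fix elliptic curves, so their contributions are integrals over one-dimensional fixed loci, and the fixed-point-free elements contribute zero, so one must actually enumerate the conjugacy classes with their translation parts and fixed loci. Third, even granting the series, you still need $\dim R_1=1$ (the character of $G_{48}$ on $H^0(\Theta)$ must be trivial) and explicit independent generators in degrees $3$ and $8$. None of these is obviously fatal, but together they are the actual content of the theorem; as written, the proposal establishes the reduction to a computation rather than the theorem itself.
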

We obtain this result by exploiting the fact that $A$ is the Jacobian of a smooth genus $2$ 
curve $\t$, a curve which was first studied by Bolza \cite{Bol}. The automorphism group of the curve $\t$ 
induces the action of $G_{48}$ on the Jacobian $A$. The main idea to obtain 
Theorem \ref{thm:iso}  is to understand 
the image of the curve $\t$ in $A$  by the quotient map $A\to A/G_{48}$ 
and to prove that its strict transform in the minimal resolution is a $(-1)$-curve.

We then construct birational 
transformations of $\PP(1,3,8)$ to $\PP^1 \times \PP^1,$ $\PP^2$ 
and obtain the equations of the images 
$M_{\PP^1 \times \PP^1},\, M_{\PP^2}$ of the branch curve $M_{48}$ in these surfaces 
(and also $M_{48}\subset \PP(1,3,8)$). In particular: 

\begin{thmx} \label{thm:uni}
In the projective plane, the mirror $M_{\PP^2}$ is the quartic curve
$$(x^{2}+xy+y^{2}-xz-yz)^{2}-8xy(x+y-z)^{2}=0.$$
This curve   has  two smooth flex points and singular set 
$\mathfrak{a}_1+2\mathfrak{a}_2$ (where an $\mathfrak{a}_{k}$ singularity
has local equation $y^{2}-x^{k+1}=0$).
The line $L_0$ through the two residual points of the flex lines  $F_1,\,F_2$ contains the node (by flex line we mean the tangent line to a flex point).
\end{thmx}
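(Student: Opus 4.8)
The plan is to handle the two assertions—the singularity type and the flex/collinearity statement—by different means: the first by a local analysis closed off with a genus count, the second by producing an explicit rational parametrisation.

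I would first locate the singularities. Writing $Q=x^{2}+xy+y^{2}-xz-yz$ and $R=x+y-z$, so that the quartic is $F=Q^{2}-8xyR^{2}$, one notes that on $\{Q=0\}$ one has $F=-8xyR^{2}$, and the three points $[0:0:1]$, $[1:0:1]$, $[0:1:1]$ all satisfy $Q=0$ together with $xyR=0$. Expanding $F$ locally shows that at $[0:0:1]$ the tangent cone is $x^{2}-6xy+y^{2}$ (two distinct lines), hence an ordinary node $\mathfrak{a}_1$, while at $[1:0:1]$ and $[0:1:1]$ (swapped by $x\leftrightarrow y$) the $2$-jet is a perfect square and completing the square brings $F$ to the form $u^{2}-8x^{3}+\cdots$, i.e.\ an $\mathfrak{a}_2$ cusp. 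To rule out further singular points I would first argue that $F$ is irreducible: a cusp is unibranch, so it can be neither an intersection of two smooth components (which gives a node or tacnode) nor a singular point of a line or an irreducible conic (which are smooth), and an irreducible cubic carries at most one cusp; thus no reduced reducible quartic can have two cusps. The genus count then finishes: a plane quartic has arithmetic genus $3$, the node and the two cusps each contribute $\delta=1$, so the geometric genus is $0$ and an extra singular point would force $p_{g}<0$. Hence the singular set is exactly $\mathfrak{a}_1+2\mathfrak{a}_2$, and Pl\"ucker's formula gives $3\cdot4\cdot2-6\cdot1-8\cdot2=2$ smooth flexes.

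For the collinearity statement I would produce a parametrisation. Projecting from the node $[0:0:1]$ exhibits the rational curve as a double cover of $\PP^{1}$ whose only branch points are the two cusps; writing $y=tx$ and setting $t=s^{2}$ rationalises the cover and, after removing the common factor, yields $\f=(X,Y,Z)$ with
$$\f(s)=\bigl[\,s^{2}-2\sqrt2\,s+1\;:\;s^{4}-2\sqrt2\,s^{3}+s^{2}\;:\;s^{4}-2\sqrt2\,s^{3}+s^{2}-2\sqrt2\,s+1\,\bigr],$$
the cusps sitting at $s=0,\infty$, the node at $s=\sqrt2\pm1$, and the symmetry $x\leftrightarrow y$ realised by $s\mapsto 1/s$. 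Computing the Wronskian gives $\det[\f,\f',\f'']=-4s^{2}(3\sqrt2\,s^{2}-8s+3\sqrt2)$; the factor $s^{2}$ together with the degree drop at $\infty$ accounts for the two cusps, so the flexes are $\f(s_{f})$ with $3\sqrt2\,s_{f}^{2}-8s_{f}+3\sqrt2=0$, namely the complex-conjugate pair $s_{f}=\tfrac13(2\sqrt2\pm i)$ (exchanged by $s\mapsto 1/s$, as their product is $1$).

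The conclusion is then a one-line computation. Since the last two coordinates of $\f$ both begin with $s^{4}-2\sqrt2\,s^{3}+\cdots$, for every line $[A:B:C]$ with $B+C\neq0$ (equivalently, not through the cusp $\f(\infty)=[0:1:1]$) the degree-$4$ polynomial $AX+BY+CZ$ has $s^{3}$- and $s^{4}$-coefficients in ratio $-2\sqrt2$, so its four roots sum to $2\sqrt2$; the flex lines are of this type. Applying this to a flex line, whose intersection divisor with the curve is $3s_{f}+s_{r}$, gives $s_{r}=2\sqrt2-3s_{f}=\mp i$, whence the residual points are $\f(\mp i)=[\pm2\sqrt2\,i:\mp2\sqrt2\,i:1]$, both lying on $\{x+y=0\}$. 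Therefore $L_{0}=\{x+y=0\}$, which manifestly contains the node $[0:0:1]$. The one genuinely delicate step is obtaining the parametrisation $\f$—recognising the projection from the node as a double cover branched only at the two cusps, and the attendant appearance of $\sqrt2$ (so that the flexes are defined over $\QQ(\sqrt2,i)$); once $\f$ is in hand, both the flex computation and the collinearity reduce to the elementary root-sum identity above.
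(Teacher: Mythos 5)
Your computations are correct as far as they go (I checked the tangent cones, the genus/Pl\"ucker counts, the parametrisation $\f$ --- indeed $Q=2\sqrt2\,s^3X$ and $X+Y-Z=s^2$ along $\f$, so $F\circ\f\equiv 0$ --- the Wronskian, and the root-sum argument giving $s_r=\mp i$ and $L_0=\{x+y=0\}$). But there is a genuine gap: you have proved only the second and third sentences of the theorem, namely that the \emph{given} quartic has singular set $\mathfrak{a}_1+2\mathfrak{a}_2$, two ordinary flexes, and the collinearity property. You have not touched the first and main assertion, that the mirror $M_{\PP^2}$ --- the image of the branch curve of $A\to A/G_{48}$ under the chain of birational maps $\PP(1,3,8)\dashrightarrow\FF_3\dashrightarrow\PP^1\times\PP^1\dashrightarrow\PP^2$ --- \emph{is} this quartic. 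In the paper that identification rests on two inputs your proposal omits entirely: (i) the birational analysis (built on Theorem \ref{thm:iso} and subsections 5.1.1--5.1.3, culminating in Proposition \ref{prop:SingMP2}) showing that $M_{\PP^2}$ is \emph{some} quartic with a node, two cusps, and two flexes whose residual points are collinear with the node; and (ii) the uniqueness statement (Theorem \ref{thm:unicity of quartic}), proved by a Magma elimination over the pencil $Q^2+a\,xyR^2$, that up to projective equivalence there is only one such configuration, forcing $a=-8$. Verifying that $a=-8$ works does not identify it with the mirror unless you also rule out every other quartic with the same configuration; as written, your argument proves an existence/consistency statement, not the theorem.

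That said, what you do prove is done by a genuinely different and attractive route. The paper obtains the properties of $M_{\PP^2}$ abstractly from the birational construction and finds the equation by computer algebra; you work directly from the equation, and your key device --- projecting from the node, recognising the double cover of $\PP^1$ branched only at the two cusps, rationalising with $t=s^2$, and then reading off residual intersection points from the coefficient identity ``sum of parameters on any line not through $[0:1:1]$ equals $2\sqrt2$'' --- is an elegant, computer-free verification that also pins down the flexes and the points $p_6,p_7=[\pm2\sqrt{-2}:\mp2\sqrt{-2}:1]$ by hand. To close the gap along your lines you would need to (a) import the paper's Proposition \ref{prop:SingMP2}, and (b) rerun your parametrisation argument for the general member $Q^2+a\,xyR^2$ of the pencil (every member of which has the node and the two cusps) and show that the flex-collinearity condition forces $a=-8$; that would in fact replace the several-hour Magma elimination by a short hand computation, but it is not in your proposal.
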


The curve $M_{\PP^2}$ with the two flex lines $F_1,F_2$ gives rise to the four orbifold 
ball-quotient surfaces (previously described by Deraux \cite{Deraux})
on suitable birational transformations of the plane.
We prove that the configuration of curves described in Theorem \ref{thm:uni} is unique up to projective equivalence. 

In \cite{Hirz}, Hirzebruch constructed ball quotient surfaces using arrangements of lines and 
performing Kummer coverings.
It is a well-known question 
whether one can construct other ball quotient surfaces
using higher degree curves, the next case being arrangements of conics. 

Let $\varphi$ be the Cremona transformation of the plane centered 
at the three singularities of  $M_{\PP^2}$. The image  
by $\varphi$ of the curves  $M_{\PP^2},\,F_1,\,F_2,\,L_0$ described in Theorem \ref{thm:uni} is a special arrangement of four plane conics.
We remark that by performing birational transforms 
of $\PP^2$ and by taking the images of the $4$ conics, one can obtain the orbifold ball-quotients of \cite{Deraux}.
To our knowledge that gives the first example of orbifold ball quotients obtained from a configuration of conics
(ball quotient orbifolds obtained from a configuration of a conic and three tangent lines 
are studied in \cite{Holzapfeld} and  \cite{Uludag}).
However we do not know whether one can obtain ball quotient surfaces by 
performing Kummer  coverings branched at these conics.

When preparing this paper, we observed that the mirror $M_{\PP^1 \times \PP^1}$ and one related
 orbifold ball quotient surface among the four might be invariant by an order $2$ automorphism.
Using the equation we have obtained for $M_{\PP^2}$, we prove that this is actually the case: 
there is an involution $\s$ 
on $\PP^1 \times \PP^1$  with fixed point set a $(1,1)$-curve $D_i$ such that the quotient surface is $\PP^2$, 
moreover the image of $D_i$ is a conic $C_o$ and the image of $M_{\PP^2}$ is the unique cuspidal cubic curve $C_u$. 
In the last section we obtain and describe the following result:

\begin{thmx} \label{thm:trois}
There is an orbifold ball-quotient structure on a surface $W$ birational to $\PP^2$ such that the strict transforms  on $W$ of $C_o,C_u$ have weights $2,\infty$ respectively. 
\end{thmx}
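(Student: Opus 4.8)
The strategy is to obtain the asserted structure as a quotient, exploiting the principle that a finite quotient of a ball-quotient orbifold by an orbifold automorphism is again a ball-quotient orbifold. Among Deraux's four structures supported on $M_{48}$ and the two singular points, I would fix the one whose mirror carries weight $\infty$, transported to the model on $\PP^1\times\PP^1$; write $\Gamma\subset PU(2,1)$ for the corresponding non-cocompact finite-covolume lattice, so that $B_2/\Gamma$ is orbifold-isomorphic to the chosen birational model of $\bigl(\PP^1\times\PP^1,\,M_{\PP^1\times\PP^1}\bigr)$. It is worth noting that $\sigma$ is, up to the explicit change of coordinates, the factor-swap of $\PP^1\times\PP^1$, so that $(\PP^1\times\PP^1)/\sigma=\mathrm{Sym}^2\,\PP^1=\PP^2$, the fixed $(1,1)$-curve $D_i$ is the diagonal, and its image $C_o$ is the discriminant conic.

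First I would verify that $\sigma$ is an automorphism of this ball-quotient orbifold. Using the equation of $M_{\PP^2}$ from Theorem~\ref{thm:uni} together with the double cover $\PP^1\times\PP^1\to\PP^2$, one checks that $\sigma$ preserves $M_{\PP^1\times\PP^1}$ with its weight and maps the two orbits of isolated orbifold points to themselves with matching local weights. Since $B_2$ is the universal orbifold cover of $B_2/\Gamma$, automorphisms lift to it while normalising the deck group; this produces a holomorphic lift $\tilde\sigma\in PU(2,1)$ normalising $\Gamma$, whose image in $N_{PU(2,1)}(\Gamma)/\Gamma$ has order $2$. Because $\sigma$ fixes the curve $D_i$, the lift may be taken to be a complex reflection of order $2$, whose fixed locus in $B_2$ is a totally geodesic complex curve mapping onto $C_o$.

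Next I would set $\Gamma'=\langle\Gamma,\tilde\sigma\rangle$, a lattice containing $\Gamma$ with index $2$; then $B_2/\Gamma'=(B_2/\Gamma)/\sigma$ is again an orbifold ball quotient. Its underlying surface is the chosen birational model of $(\PP^1\times\PP^1)/\sigma=\PP^2$, that is, a surface $W$ birational to $\PP^2$. The image of the fixed geodesic of $\tilde\sigma$ is $C_o$, and as the orbifold double cover $B_2/\Gamma\to B_2/\Gamma'$ is ramified to order $2$ along it, the strict transform of $C_o$ on $W$ acquires weight $2$; meanwhile $M_{\PP^1\times\PP^1}$ descends to $C_u$ and keeps its weight $\infty$. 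This yields precisely the weights $2$ and $\infty$ claimed in the statement.

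The conceptual scheme reduces everything to two explicit verifications, and the real obstacle is the bookkeeping in them. Confirming that $\sigma$ preserves \emph{all} of the orbifold data — not only $M_{\PP^1\times\PP^1}$ but also the two orbits of isolated orbifold points and their local weights — relies on the precise geometry furnished by Theorem~\ref{thm:uni}. Moreover, although $(\PP^1\times\PP^1)/\sigma$ is already the smooth surface $\PP^2$, the configuration $C_o\cup C_u$ is not log-smooth there: the cusp of $C_u$ and the points of $C_o\cap C_u$ must be resolved by a sequence of blow-ups in order to reach the orbifold-smooth model $W$, and one has to check that the weights $2$ and $\infty$ persist under these modifications while the exceptional curves receive exactly the weights imposed by the ball-quotient structure. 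Arranging this resolution so that it is simultaneously compatible with the two pre-existing orbifold points is where the genuine work lies.
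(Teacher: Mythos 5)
Your route is genuinely different from the paper's. The paper does not realise the orbifold as a quotient in its proof: it builds the birational model $W$ explicitly (blow-ups over the cusp $c_1$ of $C_u$, the flex $c_2$, and the tangency point $c_3$ of $C_o\cap C_u$, followed by contractions producing singularities $A_1+A_1+A_2+A_3+\frac{1}{3}(1,1)$), assigns weight $\infty$ to $\bar C_u$ and weight $2$ to $\bar C_o$ \emph{and} to two exceptional curves $F_d$, $H$, determines the local isotropy groups at the five singular points (orders $16,4,3,8,6$, including a semidihedral group $SD_{16}$ where $\bar C_o$ acquires a cusp at an $A_1$ point), and then computes $c_1^2(\mathcal{W})=3c_2(\mathcal{W})=\frac{9}{16}$ so that the Kobayashi--Nakamura--Sakai numerical criterion applies. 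That argument is self-contained modulo KNS and independent of Deraux's uniformisation theorem. Your argument instead takes Deraux's theorem as input and pushes it down through $\sigma$; the paper endorses exactly this picture, but only as a closing Remark ($\mathcal{W}$ ``can be seen as the quotient of $W'$ by the involution $\sigma$''), not as the proof. What your approach buys is conceptual clarity and the lattice $\Gamma'=\langle\Gamma,\tilde\sigma\rangle$ with $[\Gamma':\Gamma]=2$; what the paper's buys is logical independence from Deraux and, as a by-product, the complete local data and the orbifold Euler number $\frac{3}{16}$.

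As written, though, your proof has a genuine gap at the step you yourself flag: the $\sigma$-invariance of the orbifold data. According to the paper, only \emph{one} of Deraux's four structures is $\sigma$-invariant, and the obstruction is not the weight of the mirror but an auxiliary exceptional divisor $E$ in Deraux's configuration, which breaks the symmetry unless it carries weight $1$. So your selection criterion (``the one whose mirror carries weight $\infty$'') does not by itself isolate the correct orbifold, and for the other three structures the lift $\tilde\sigma$ cannot exist because $\sigma$ fails to preserve the branch divisor. Two further verifications are needed before the quotient can be identified with the paper's $W$: that Deraux's birational model is $\sigma$-equivariant (the centres of its blow-ups must be permuted by $\sigma$ --- e.g.\ the two fibers $C_1',C_2'$ tangent to the mirror are exchanged and descend to the single flex line $F_l$), and that the quotient produces the correct weighted exceptional divisors $F_d$, $H$ and the isotropy $SD_{16}$ at the cusp of $\bar C_o$. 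None of these steps seems likely to fail, but they are exactly where the content of the theorem sits, and they remain unexecuted in your proposal.
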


The paper is structured as follows:

In section 2, we recall some results of Deraux on the quotient surface $A/G_{48}$ 
and introduce some notation.
In section 3, we study properties of the surface $\PP(1,3,8)$.
In section 4, we introduce the Bolza curve $\t$ and prove that $A/G_{48}$ is isomorphic to $\PP(1,3,8)$.
Section 5 is devoted to the equation of the mirror $M_{\PP^2}$. 
Moreover we describe the four conics configuration.
Section 6 deals with Theorem \ref{thm:trois}.

Some of the proofs in sections 5 and 6 use the computational algebra system Magma, version V2.24-5. A text file containing only the Magma code that appear below is available as an auxiliary file on arXiv and at~\cite{rito}.

Along this paper we use intersection theory on normal surfaces as defined 
by Mumford in \cite[Section 2]{Mumford}.

\textbf{ Acknowledgements}
The last author thanks Martin Deraux for discussions on the problem of proving the isomorphism of $A/G_{48}$ with $\PP(1,3,8).$ 

The second author was supported by FCT (Portugal) under the project PTDC/ MAT-GEO/2823 /2014,
the fellowship SFRH/BPD/111131/2015 and by CMUP (UID/MAT/00144/ 2019),
which is funded by FCT with national (MCTES) and European structural funds through the programs FEDER,
under the partnership agreement PT2020.

\section{Quotient of $A$ by $G_{48}$ and image of the mirrors}

\subsection{\label{subsec:The-images-ofmirr}Properties of $A/G_{48}$ and image of the mirrors}

In this section, we collect some facts from \cite{Deraux} about the action of the automorphism subgroup $G_{48}$ on the Abelian surface \[
A:=\CC^{2}/(\ZZ[i\sqrt{2}])^{2}.
\]

There exists a group $G_{48}$ of order $48$ acting on $A$ 
which is isomorphic to $GL_{2}(\FF_{3})$ (see \cite[Section 3.1]{Deraux} for generators).  
The action of $G_{48}$ on $A$ has no global fixed points
 (in particular some elements have a non-trivial translation part).
 
The group $G_{48}$ contains $12$ order $2$ reflections, i.e. their linear parts acting on 
the tangent space $T_A\simeq \CC^2$  are complex order $2$ reflections. 
The fix point set of a reflection being usually called a mirror, 
we similarly call the fixed point set of a reflection $\tau$ of $G_{48}$ a {\it mirror}. 
The mirror of such a $\tau$ is an elliptic curve on $A$. 
The group $G_{48}$ acts transitively on the set of the $12$ mirrors whose list can be found in~\cite[Table~1]{Deraux}.

We denote by $M$ the union of the mirrors in $A$ and by $M_{48}$ the image of $M$ 
in the quotient surface $A/G_{48}$.  The curve $M_{48}$ is also called the mirror of $A/G_{48}$.

Except the points on $M$, there are two orbits of points in $A$ with non-trivial isotropy, 
one with isotropy group of order 3 at each point, the other with isotropy group of order 8, see~\cite[Proposition 4.4]{Deraux}. 
Correspondingly, the quotient $A/G_{48}$ has two singular points, which are the images of the two special orbits.
\begin{prop}
The surface $A/G_{48}$ is rational and its singularities  are of type $A_{2}+\frac{1}{8}(1,3)$.
\\
The minimal resolution $p:X_{48}\to A/G_{48}$ of the surface $A/G_{48}$
has invariants $K_{X_{48}}^{2}=5$ and $c_{2}(X_{48})=7$. 
\end{prop}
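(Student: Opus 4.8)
The plan is to establish rationality via Castelnuovo's criterion and then to pin down the two numerical invariants by a single Euler-number computation combined with Noether's formula. Since $A/G_{48}$ has only quotient singularities, it has rational singularities, so $h^{i}(\OO_{X_{48}})=h^{i}(\OO_{A/G_{48}})=h^{i}(\OO_{A})^{G_{48}}$ for all $i$. First I would compute invariant forms on $A$: the holomorphic $1$-forms are spanned by $dz_{1},dz_{2}$, on which $G_{48}$ acts through the irreducible $2$-dimensional representation giving the linear parts, so there are no invariants and $q(X_{48})=0$. A reflection $\tau\in G_{48}$ has linear part with eigenvalues $1,-1$, hence acts on $dz_{1}\wedge dz_{2}$ by $\det=-1$; thus there is no invariant holomorphic $2$-form, and since the discrepancies satisfy $a_{i}\le 0$ the sections of $K_{X_{48}}$ inject into $(\Omega^{2}_{A})^{G_{48}}=0$, giving $p_{g}(X_{48})=0$.

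The key point for rationality is $P_{2}(X_{48})=0$. Because the abelianization of $GL_{2}(\FF_{3})$ is $\ZZ/2$, every linear part has $\det=\pm1$, so $(dz_{1}\wedge dz_{2})^{\otimes 2}$ is $G_{48}$-invariant and spans the one-dimensional space of invariant bicanonical forms. This form extends across the resolution of the canonical $A_{2}$ point, but not across the $\frac18(1,3)$ point: its Hirzebruch--Jung resolution is a chain of two $(-3)$-curves $E_{1},E_{2}$ with discrepancies $a_{1}=a_{2}=-\tfrac12$, so near this point $2K_{X_{48}}=p^{*}(2K_{A/G_{48}})-E_{1}-E_{2}$. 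As the invariant form is nonvanishing there, it acquires a pole along $E_{1},E_{2}$ and defines no global section of $2K_{X_{48}}$. Hence $P_{2}(X_{48})=0$, and together with $q=0$ Castelnuovo's criterion shows $X_{48}$, hence $A/G_{48}$, is rational. The two singularity types follow from the local isotropy representations at the special orbits (order $3$ with eigenvalues $\zeta_{3},\zeta_{3}^{-1}$, giving $A_{2}=\frac13(1,2)$; order $8$ with eigenvalues $\zeta_{8},\zeta_{8}^{3}$, giving $\frac18(1,3)$), as recorded in \cite[Proposition 4.4]{Deraux}.

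For the numerical invariants, $q=p_{g}=0$ gives $\chi(\OO_{X_{48}})=1$, so Noether's formula reads $K_{X_{48}}^{2}+c_{2}(X_{48})=12$; it therefore suffices to compute $c_{2}(X_{48})=e(X_{48})$. Resolving each singular point replaces a point by a chain of two rational curves (Euler number $3$), raising the Euler number by $2$ at each of the two singularities, so $e(X_{48})=e(A/G_{48})+4$. I would evaluate $e(A/G_{48})=\frac1{48}\sum_{g\in G_{48}}e(A^{g})$: the identity contributes $e(A)=0$, each of the $12$ reflections contributes $e$ of its mirror elliptic curve, namely $0$, and every remaining $g$ has isolated fixed points contributing $\#A^{g}=|\det_{\RR}(L_{g}-\mathrm{Id})|=|(\lambda_{1}-1)(\lambda_{2}-1)|^{2}$, where $\lambda_{1},\lambda_{2}$ are the eigenvalues of the linear part $L_{g}$. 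Summing over the conjugacy classes of $GL_{2}(\FF_{3})$ should yield $e(A/G_{48})=3$, whence $e(X_{48})=c_{2}(X_{48})=7$ and $K_{X_{48}}^{2}=5$.

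The main obstacle is this last bookkeeping step: one must run through the conjugacy classes of $GL_{2}(\FF_{3})$, determine for each the eigenvalues of the complex linear part (equivalently, which classes fix curves versus isolated points), and count fixed points correctly, taking care that elements with an eigenvalue equal to $1$ but a nontrivial translation part contribute nothing. I expect the identity and the $12$ reflections to be the only classes with positive-dimensional fixed loci, so that the whole Euler number comes from the central order-$2$ element together with the order-$3,4,6,8$ elements with isolated fixed points; verifying that their contributions total $48\cdot 3=144$ is the one genuinely computational input, and it is here that the precise description of the $G_{48}$-action on $A$ from \cite{Deraux} is needed.
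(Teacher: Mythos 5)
Your proposal is correct in outline but runs in the opposite direction to the paper's proof, and it leaves its one genuinely new computation unperformed. The paper computes $K_{X_{48}}^{2}$ \emph{directly}: from $K_{A}=\pi^{*}K_{A/G_{48}}+M$ and Deraux's value $M_{i}M=24$ it gets $M^{2}=288$, hence $(K_{A/G_{48}})^{2}=6$, then subtracts $1$ for the half-discrepancies of the two $(-3)$-curves over $\frac{1}{8}(1,3)$ to get $K_{X_{48}}^{2}=5$; rationality is obtained by exhibiting a dominating rational quotient $A/\langle\tau,[-1]_{A}\rangle$, and only then is $c_{2}=7$ read off from Noether's formula. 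You instead compute $c_{2}$ directly via the equivariant Euler characteristic $e(A/G_{48})=\frac{1}{48}\sum_{g}e(A^{g})$ and recover $K^{2}$ from Noether, proving rationality by Castelnuovo ($q=P_{2}=0$). Both routes are legitimate, and yours has the virtue of making $q=p_{g}=0$ (hence $\chi(\OO)=1$, which the paper uses implicitly) explicit via the representation theory of the linear parts; the paper's route has the virtue that its external input ($M_{i}M=24$) is a single cited number, whereas yours requires running through all conjugacy classes of $GL_{2}(\FF_{3})$, deciding which affine elements have empty fixed locus because of their translation parts, and verifying that the isolated fixed points total $144$. You explicitly defer exactly this step ("I expect\dots"), so as written the value $c_{2}=7$ is not yet established by your argument; the count does come out to $144$, but it must be done (or imported from Deraux's fixed-point tables) before the proof is complete.

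Two smaller points. First, in your $P_{2}=0$ argument the obstruction to descending $(dz_{1}\wedge dz_{2})^{\otimes2}$ already appears in codimension one: since $\pi^{*}(2K_{A/G_{48}})=2K_{A}-2M$, a section of $2K_{X_{48}}$ pulls back to an invariant bicanonical form on $A$ vanishing to order $2$ along the mirrors, and the unique invariant form is nowhere zero --- this kills $P_{2}$ immediately, without any analysis at the $\frac{1}{8}(1,3)$ point (your local pole computation there is correct but is neither necessary nor, on its own, the place where the descent first fails). Second, your formula $\#A^{g}=|{\det}_{\RR}(L_{g}-\mathrm{Id})|$ is right when $L_{g}-\mathrm{Id}$ is invertible, but you should state that the reflections' fixed loci could a priori be disjoint unions of several translated elliptic curves; this does not affect the Euler number (still $0$) but matters if you try to cross-check against the list of $12$ mirrors.
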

\begin{proof}
Let us compute the invariants of $X_{48}$. Let $\pi:A\to A/G_{48}$
be the quotient map. One has 
\begin{equation}
\OO_{A}=K_{A}=\pi^{*}K_{A/G_{48}}+M,\label{eq:OA}
\end{equation}
moreover, according to \cite[\textsection 4]{Deraux}, each mirror
$M_{i},\,i=1,...,12$, satisfies $M_{i}M=24$, therefore $M^{2}=288$
and 
\[
(K_{A/G_{48}})^{2}=\frac{1}{48}M^{2}=6.
\]
We observe that $M=\pi^{*}\left(\frac{1}{2}M_{48}\right)$, thus by (\ref{eq:OA}),
one gets $M_{48}=-2K_{A/G_{48}}.$ 

The singularities of the quotient surface $A/G_{48}$ are computed in~\cite[Table~2]{Deraux}. Let $C_{1},C_{2}$ be the two $(-3)$-curves above the singularity $\frac{1}{8}(1,3)$; they are  such that $C_1C_2=1$.
Since the singularity of type $A_2$ is an $ADE$ singularity, we obtain:
\[
K_{X_{48}}=p^{*}K_{A/G_{48}}-\frac{1}{2}(C_{1}+C_{2})
\]
and $(K_{X_{48}})^{2}=5$.

Let $\tau$ be a reflection in $G_{48}$ and let $G$ be the Klein group of order $4$ generated by $\tau$ and 
the involution $[-1]_A\in G_{48}$.   One can check that the quotient surface $A/G$ is rational. 
Being dominated by the rational surface $A/G$, the surface $A/G_{48}$  is also rational. 
Thus the second Chern number is $c_{2}(X_{48})=7$ by Noether's formula.
\end{proof}
The mirror $M_{48}$ (the image of $M$ by the quotient map) does not contain singularities of $A/G_{48}$, moreover:
\begin{lem}
The pull-back $\tilde{M}_{48}$ of the  mirror $M_{48}$ 
by the resolution map $p:X_{48}\to A/G_{48}$ has self-intersection
$24$. Its singular set is
\[
2\frak{a}_{2}+\frak{a}_{3}+\frak{a}_{5},
\]
where $\mathfrak{a}_{k}$ denotes a singularity with local equation $y^{2}-x^{k+1}=0$.
\end{lem}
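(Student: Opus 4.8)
The self-intersection is the easy half. Since $M_{48}$ does not meet the two singular points of $A/G_{48}$ (as recorded just before the statement), the resolution $p$ restricts to an isomorphism over a neighbourhood of $M_{48}$; hence $\tilde{M}_{48}$ is analytically isomorphic to $M_{48}$, so $\tilde{M}_{48}^{2}=M_{48}^{2}$ and the two curves share the same singularities. For the value I would simply use $M_{48}=-2K_{A/G_{48}}$ together with $K_{A/G_{48}}^{2}=6$ from the Proposition, which gives $M_{48}^{2}=4K_{A/G_{48}}^{2}=24$; equivalently one may pull back by $\pi$ using $M=\pi^{*}(\tfrac{1}{2}M_{48})$, $M^{2}=288$ and $\deg\pi=48$.

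For the singularities the plan is to read off the local analytic type of $M_{48}$ at each of its singular points from the stabiliser of a preimage. Fix $x\in M\subset A$ and let $H_{x}\le G_{48}$ be its stabiliser. Since $\pi(x)\in M_{48}$ is a smooth point of $A/G_{48}$, the group $H_{x}$ acts on $T_{A,x}\simeq\CC^{2}$ as a reflection group; as the only reflections in $G_{48}$ are the twelve order-$2$ ones (the order-$3$ and order-$8$ isotropy elements act as $\tfrac{1}{3}(1,2)$ and $\tfrac{1}{8}(1,3)$, hence are not reflections), $H_{x}$ is generated by order-$2$ reflections and is therefore dihedral, of order $2m$ with $m$ mirror lines. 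The branches of $M$ at $x$ are exactly the fixed loci of the reflections of $H_{x}$, so precisely $m$ mirrors pass through $x$. Linearising the $H_{x}$-action by Cartan's lemma, I can choose coordinates $(u,v)$ at $x$ in which the reflections are $(u,v)\mapsto(\zeta^{k}v,\zeta^{-k}u)$ with $\zeta=e^{2\pi i/m}$, so that each $\mathrm{Fix}(\tau_{k})$ is exactly the line $u=\zeta^{k}v$ and $M$ is exactly the arrangement $u^{m}-v^{m}=0$. The basic invariants are $p=uv$ and $q=u^{m}+v^{m}$, giving $\CC[p,q]$ as the ring of invariants, and on $\{u^{m}=v^{m}\}$ one has $q=2u^{m}$ and $p^{m}=u^{m}v^{m}=q^{2}/4$. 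Thus the image of $M$ in the smooth quotient $\mathrm{Spec}\,\CC[p,q]=\CC^{2}$ is $q^{2}-4p^{m}=0$, an $\mathfrak{a}_{m-1}$ singularity. So a point where $m$ mirrors meet produces an $\mathfrak{a}_{m-1}$ point on $M_{48}$, and in particular $m=3,4,6$ give $\mathfrak{a}_2,\mathfrak{a}_3,\mathfrak{a}_5$.

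It then remains to determine the incidence of the twelve mirrors in $A$: which of them pass through a common point, and the order $2m$ of the corresponding stabiliser. This is the step I expect to be the real work, and I would carry it out from Deraux's explicit generators of $G_{48}$ and the mirror list in \cite[Table~1]{Deraux}, computing the $G_{48}$-orbits of intersection points and their stabilisers. The claim to establish is that, up to $G_{48}$, there are four orbits of intersection points, with $m=3$ (two orbits), $m=4$ and $m=6$, yielding $2\mathfrak{a}_{2}+\mathfrak{a}_{3}+\mathfrak{a}_{5}$; in particular no two mirrors cross with a commuting pair of reflections, so no node $\mathfrak{a}_{1}$ occurs. As a consistency check I would verify the intersection count: each mirror is a smooth elliptic curve, so $M_{i}^{2}=0$ and $\sum_{i<j}M_{i}M_{j}=\tfrac{1}{2}M^{2}=144$; a point where $m$ mirrors meet transversally contributes $\binom{m}{2}$, and summing over orbits of sizes $48/2m$ gives $2\cdot 8\cdot 3+6\cdot 6+4\cdot 15=144$, while from a single mirror (which passes through two points of each type) one gets $2(2+2+3+5)=24=M_{i}M$. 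The main obstacle is thus not the local analysis but the explicit bookkeeping of the mirror configuration, where one must be certain to locate every intersection orbit and compute its stabiliser correctly.
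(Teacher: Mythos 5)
Your proof takes a genuinely different route from the paper's. The paper disposes of the lemma in two sentences: it observes that $\tilde{M}_{48}=p^{*}M_{48}$ has the same singularities and self-intersection as $M_{48}$ because $M_{48}$ avoids the singular points of $A/G_{48}$, and then simply cites Deraux (\cite[Table 3]{Deraux} for the singularities, \cite[\S 6.2]{Deraux} for $M_{48}^{2}=24$). Your computation of the self-intersection from $M_{48}=-2K_{A/G_{48}}$ and $K_{A/G_{48}}^{2}=6$ is correct and self-contained. More interestingly, your local analysis actually explains \emph{why} Deraux's table holds: by Chevalley--Shephard--Todd the stabiliser $H_{x}$ of a point of $M$ mapping to a smooth point must be generated by reflections, hence (the only reflections in $G_{48}$ being the twelve involutions) is a dihedral group $G(m,m,2)$ with invariants $p=uv$, $q=u^{m}+v^{m}$, and the mirror arrangement $u^{m}-v^{m}=0$ maps to $q^{2}-4p^{m}=0$, an $\mathfrak{a}_{m-1}$ point. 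This is a nice conceptual supplement that the paper does not contain. Your consistency checks ($\sum_{i<j}M_{i}M_{j}=144$ and $M_{i}M=24$) are also correctly computed and do match the claimed orbit structure.

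The one genuine gap is the step you yourself flag: the determination that the mirror intersection points fall into exactly four $G_{48}$-orbits with $m=3,3,4,6$ is asserted, not established. The numerical consistency check is necessary but not sufficient (it does not by itself exclude, say, configurations containing nodal orbits with $m=2$), so until the orbit/stabiliser bookkeeping from Deraux's explicit generators and mirror list is actually carried out, the singularity type $2\mathfrak{a}_{2}+\mathfrak{a}_{3}+\mathfrak{a}_{5}$ is not proved. To be fair, this is precisely the information the paper itself outsources to \cite[Table 3]{Deraux} rather than reproving; so your argument, once that finite computation is executed (or once you simply cite Deraux for the incidence data, as the paper does), is complete and arguably more illuminating than the published proof. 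You should also make explicit the small point that $H_{x}$, being a rank-two group generated by order-two reflections inside $GL_{2}(\FF_{3})$ with no global fixed point, really is of the imprimitive dihedral type $G(m,m,2)$ rather than one of the exceptional rank-two reflection groups; in this setting that follows from the subgroup structure of $GL_{2}(\FF_{3})$, but it deserves a sentence.
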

\begin{proof}
The singularities of $\tilde{M}_{48}=p^{*}M_{48}$ are the same
as the singularities of $M_{48}$ since $M_{48}$ is in the smooth
locus of $A/G_{48}$. For the computation of the singularities of $M_{48}$,
we refer to \cite[Table 3]{Deraux}, and for the self-intersection of 
$\tilde{M}_{48}$ (which is the same as the one of $M_{48}$) to \cite[\textsection 6.2]{Deraux}.

\end{proof}

\section{The weighted projective space $\protect\PP(1,3,8)$.\label{sec:The-weighted-projective}}

Since we aim to prove that the quotient surface $A/G_{48}$  is isomorphic to $\PP(1,3,8)$, 
one first  has to study that weighted projective space: this is the goal of this (technical) section. 
The reader might at first browse through the main 
results and notation and proceed to the next section.

\subsection{The surface $\PP(1,3,8)$ and its minimal resolution}
The weighted projective space $\PP(1,3,8)$ is the quotient of $\PP^{2}$
by the group $\ZZ_{3}\times\ZZ_{8}$ generated by 
\[
\sigma=\left(\begin{array}{ccc}
1 & 0 & 0\\
0 & j & 0\\
0 & 0 & \zeta
\end{array}\right)\in PGL_{3}(\CC),
\]
where $j^{2}+j+1=0$ and $\zeta$ is a primitive $8^{th}$ root of
unity. The fixed point set of the order $24$ element $\s$ is 
\[
p_{1}=(1:0:0),\,p_{2}=(0:1:0),\,p_{3}=(0:0:1).
\]
For $i,j\in\{1,2,3\}$ with $i\neq j$ let $L_{ij}'$ be the line
through $p_{i}$ and $p_{j}$. The fixed point set of an order $3$
element (e.g. $\sigma^{8}$) is $p_{2}$ and the line $L_{13}'$.
The fixed point set of an order $8$ element (e.g. $\sigma^{3}$) and its non-trivial powers
is $p_{3}$ and the line $L_{12}'$. Let $\pi:\PP^{2}\to\PP(1,3,8)$
be the quotient map: $\pi$ is ramified with order $3$ over $L_{13}'$
and with order $8$ over $L_{12}'$. The surface $\PP(1,3,8)$ has two singularities, images of $p_{2}$
and $p_{3}$, which are respectively a cusp $A_{2}$ and a singularity
of type $\frac{\ensuremath{1}}{8}(1,3)$.
We denote by $p:Z\to\PP(1,3,8)$
the minimal desingularization map. 
The singularity of type $\frac{\ensuremath{1}}{8}(1,3)$ is resolved by two
rational curves $C_{1},C_{2}$ with $C_{1}C_{2}=1$, $C_{1}^{2}=C_{2}^{2}=-3$, and the singularity $A_2$ is resolved by two
rational curves $C_{3},C_{4}$ with $C_{3}C_{4}=1$, $C_{3}^{2}=C_{4}^{2}=-2$, (see e.g. \cite[Chapter III]{BHPV}).

\begin{lem}~\label{lem:numP138}
The invariants of the resolution $Z$ are 
\[
K_{Z}^{2}=5,\,c_{2}(Z)=7,\,p_{q}=q=0.
\]
\end{lem}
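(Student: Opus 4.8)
The plan is to compute the four invariants $K_Z^2$, $c_2(Z)$, $p_g$, and $q$ by pulling everything back to $\PP^2$ via the quotient map $\pi:\PP^2\to\PP(1,3,8)$ and then using the resolution $p:Z\to\PP(1,3,8)$ together with the known configuration of exceptional curves $C_1,C_2,C_3,C_4$. Since $\PP(1,3,8)$ is a quotient of the rational surface $\PP^2$, and $Z$ is birational to $\PP(1,3,8)$, the surface $Z$ is rational, so immediately $p_g=q=0$; this takes care of two of the four numbers and fixes the topological Euler characteristic constraint $\chi(\OO_Z)=1$. The remaining task is to pin down $K_Z^2$ (equivalently $c_2(Z)$, since the two are tied by Noether's formula $K_Z^2+c_2(Z)=12\chi(\OO_Z)=12$).

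First I would compute $K_{\PP(1,3,8)}^2$ on the singular surface in the sense of Mumford's intersection theory (as the paper declares it uses). The cleanest route is the ramification formula for the degree-$24$ cover $\pi$: one has $K_{\PP^2}=\pi^*K_{\PP(1,3,8)}+R$, where $R$ is the ramification divisor supported on the two lines $L_{13}'$ (ramification order $3$) and $L_{12}'$ (ramification order $8$), so $R=2L_{13}'+7L_{12}'$ as divisors on $\PP^2$. Writing $K_{\PP^2}=-3H$ and using $\pi^*K_{\PP(1,3,8)}=-3H-R=-3H-2L_{13}'-7L_{12}'$ together with $\deg\pi=24$ and the projection formula $\pi^*K\cdot\pi^*K=24\,K^2$, I would solve for $K_{\PP(1,3,8)}^2$.

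Next I would pass from the singular surface to its resolution $Z$. Since the $A_2$ singularity is an ADE (canonical/rational double point) singularity, the two $(-2)$-curves $C_3,C_4$ contribute nothing to the canonical pullback: $K_Z=p^*K_{\PP(1,3,8)}$ near that point. For the $\tfrac18(1,3)$ singularity, which is not canonical, I would write $K_Z=p^*K_{\PP(1,3,8)}+a_1C_1+a_2C_2$ with discrepancies $a_1,a_2$ determined by the adjunction conditions $K_Z\cdot C_i=-2-C_i^2$ for the rational $(-3)$-curves (i.e. $K_Z\cdot C_i=1$ for $i=1,2$), solving the linear system using $C_1^2=C_2^2=-3$, $C_1C_2=1$. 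Then $K_Z^2=(p^*K_{\PP(1,3,8)})^2+(a_1C_1+a_2C_2)^2=K_{\PP(1,3,8)}^2+(a_1C_1+a_2C_2)^2$, since $p^*K\cdot C_i=0$. This should yield $K_Z^2=5$, after which $c_2(Z)=12-K_Z^2=7$ follows from Noether's formula.

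The step I expect to be the main obstacle is getting the discrepancy computation for the $\tfrac18(1,3)$ singularity exactly right, since this is the only non-canonical contribution and the correct bookkeeping of $(a_1C_1+a_2C_2)^2$ against $K_{\PP(1,3,8)}^2$ is where sign or fraction errors creep in. A useful cross-check, and perhaps the more conceptual route, is that the same surface $Z$ is (by the target Theorem~\ref{thm:iso}) the resolution $X_{48}$ of $A/G_{48}$, whose invariants $K^2=5$, $c_2=7$ are already established in the Proposition of Section~2 by an independent Abelian-surface computation; so I would verify that my intrinsic calculation on $\PP(1,3,8)$ reproduces exactly those numbers, which both confirms the arithmetic and foreshadows the isomorphism.
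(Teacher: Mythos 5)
Your proposal is correct and follows essentially the same route as the paper: the ramification formula for the degree-$24$ cover gives $\pi^*K_{\PP(1,3,8)}\equiv-12L$ hence $K_{\PP(1,3,8)}^2=6$, and the adjunction conditions on the $(-3)$-curves $C_1,C_2$ give the discrepancy $-\frac{1}{2}(C_1+C_2)$ and $K_Z^2=5$, with rationality supplying $p_g=q=0$. The only (minor) divergence is that the paper obtains $c_2(Z)=7$ by a direct orbifold Euler-characteristic count using a formula from \cite{RoulleauQuo}, whereas you deduce it from Noether's formula; both are valid.
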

\begin{proof}
We have:
\[
K_{\PP^{2}}\equiv \pi^{*}K_{\PP(1,3,8)}+2L_{13}'+7L_{12}',
\]
therefore since $K_{\PP^{2}}\equiv -3L$, we obtain $\pi^{*}K_{\PP(1,3,8)}\equiv -12L$
and
\[
(K_{\PP(1,3,8)})^{2}=\frac{(-12L)^{2}}{24}=6.
\]

We have 
$$K_{Z}\equiv p^{*}K_{\PP(1,3,8)}-\sum_{i=1}^4a_iC_{i}
$$
where the $a_i$ are rational numbers.
The divisor $K_{Z}$ must satisfy the adjunction formula i.e. one must have
$C_{i}K_{Z}=-2-C_i ^2$ for $i\in\{1,2,3,4\}$. That gives:
\[
K_{Z}=p^{*}K_{\PP(1,3,8)}-\frac{1}{2}(C_{1}+C_{2})
\]
 and therefore $K_{Z}^{2}=5$. For the Euler number, one may use the formula
in \cite[Lemma 3]{RoulleauQuo}:
\[
e(\PP(1,3,8))=\frac{1}{24}(3+2(2-2)+7(2-2)+23\cdot3)=3.
\]
Thus $e(Z)=e(\PP(1,3,8))-2+3+3=7$. Since $\PP(1,3,8)$ is dominated
by $\PP^{2},$ the surface $Z$ is rational, so that $q=p_{g}=0$.
\end{proof}

\subsection{The branch curves in $\PP(1,3,8)$ and their pullback in the resolution}
Let $L_{ij}$ be the image of the line $L_{ij}'$
on $\PP(1,3,8)$ and let $\bar{L}_{ij}$ be the strict transform of
$L_{ij}$ in $Z$. 
\begin{prop}

\label{prop:The-curve-L23 is a (-1)} We have:
\[
\begin{array}{c}
\bar{L}_{23}^2=-1,\,\ \bar{L}_{23}C_{1}=\bar{L}_{23}C_{3}=1,\,\,\bar{L}_{23}C_{2}=\bar{L}_{23}C_{4}=0,\\[\medskipamount]
\bar{L}_{13}^{2}=0,\,\ \bar{L}_{13}C_{2}=1,\,\,\bar{L}_{13}C_{1}=\bar{L}_{13}C_{3}=\bar{L}_{13}C_{4}=0,\\[\medskipamount]
\bar{L}_{12}^{2}=2, \,\, \bar{L}_{12}C_{4}=1,\,\,\bar{L}_{12}C_{1}=\bar{L}_{12}C_{2}=\bar{L}_{12}C_{3}=0.
\end{array}
\]
\end{prop}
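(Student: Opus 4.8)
The plan is to compute everything by comparing intersection numbers on $Z$ with those on $\PP(1,3,8)$ and on the cover $\PP^{2}$, the only genuinely geometric input being the local shape of the two quotient singularities. First I would record the self-intersections of the three lines on $\PP(1,3,8)$ itself. Since $\pi\colon\PP^{2}\to\PP(1,3,8)$ has degree $24$ and each $L_{ij}$ is the image of the single line $L_{ij}'$ (which is preserved setwise by the diagonal $\sigma$, so that $\pi^{-1}(L_{ij})=L_{ij}'$), one has $\pi^{*}L_{ij}=e_{ij}\,L_{ij}'$, where $e_{ij}$ is the order of the inertia group of a general point of $L_{ij}'$: namely $e_{23}=1$, $e_{13}=3$, $e_{12}=8$. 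From the projection formula $(\pi^{*}L_{ij})^{2}=24\,L_{ij}^{2}$ and $(L_{ij}')^{2}=1$ this gives $L_{ij}^{2}=e_{ij}^{2}/24$, i.e. $L_{23}^{2}=\tfrac1{24}$, $L_{13}^{2}=\tfrac38$, $L_{12}^{2}=\tfrac83$.

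Next I would determine the incidences with the exceptional locus. Reading off the coordinate points, $L_{23}$ passes through both singular points, $L_{13}$ only through the $\tfrac18(1,3)$ point, and $L_{12}$ only through the $A_{2}$ point. The key local step is to check that at each singularity the relevant line is one of the two \emph{axes} of the cyclic quotient, and that the two lines through a given singular point are the two \emph{distinct} axes. This can be seen by passing to local coordinates at $p_{2}$ and $p_{3}$, quotienting first by the pseudo-reflection subgroup (the one fixing the ramified line $L_{13}'$, resp.\ $L_{12}'$, pointwise) to reach the genuine $\tfrac13(1,2)$, resp.\ $\tfrac18(1,3)$, singularity, and observing that the two lines become the two coordinate axes. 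In a toric resolution of $\tfrac1n(1,q)$ the strict transforms of the two boundary axes meet the two opposite ends of the exceptional chain, each transversally; as both chains here have length $2$, each $\bar{L}_{ij}$ meets exactly one $C_{k}$ per relevant resolution with $\bar{L}_{ij}C_{k}=1$. Fixing the labelling so that $C_{1},C_{3}$ are the ends met by $\bar{L}_{23}$ and $C_{2},C_{4}$ the ends met by $\bar{L}_{13},\bar{L}_{12}$ yields precisely the off-diagonal entries of the table.

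Finally I would recover the self-intersections. Writing $p^{*}L_{ij}=\bar{L}_{ij}+\sum_{k}a^{(ij)}_{k}C_{k}$ and imposing $p^{*}L_{ij}\cdot C_{k}=0$ for all $k$, the already-known values $\bar{L}_{ij}C_{k}$ together with the intersection matrix of the exceptional curves ($C_{1}C_{2}=C_{3}C_{4}=1$, $C_{1}^{2}=C_{2}^{2}=-3$, $C_{3}^{2}=C_{4}^{2}=-2$) determine the coefficients $a^{(ij)}_{k}$ via one $2\times2$ linear system per singularity. Since $p$ is birational, $(p^{*}L_{ij})^{2}=L_{ij}^{2}$ and $p^{*}L_{ij}\cdot C_{k}=0$ give $\bar{L}_{ij}^{2}=L_{ij}^{2}+\big(\sum_{k}a^{(ij)}_{k}C_{k}\big)^{2}$, and substituting the solved coefficients produces $\bar{L}_{23}^{2}=-1$, $\bar{L}_{13}^{2}=0$, $\bar{L}_{12}^{2}=2$. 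I expect the only real obstacle to be the local step of the middle paragraph: verifying transversality and, above all, that $\bar{L}_{23}$ and $\bar{L}_{13}$ (resp.\ $\bar{L}_{23}$ and $\bar{L}_{12}$) attach to \emph{different} ends of a chain rather than the same one. Everything else is the degree-$24$ ramification bookkeeping and two small linear systems.
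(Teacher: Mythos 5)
Your proposal is correct and arrives at all the stated numbers, but the step where you determine the incidences $\bar{L}_{ij}C_{k}$ takes a genuinely different route from the paper. The paper does no local analysis at the singular points at all: it sets $u_{k}=\bar{L}_{ij}C_{k}$ as unknown non-negative integers, writes $\bar{L}_{ij}=p^{*}L_{ij}-\sum a_{k}C_{k}$ exactly as you do, solves for the $a_{k}$ in terms of the $u_{k}$ via the inverted intersection matrices, and then expresses both $K_{Z}\bar{L}_{ij}$ (using $K_{Z}=p^{*}K_{\PP(1,3,8)}-\frac{1}{2}(C_{1}+C_{2})$) and $\bar{L}_{ij}^{2}$ as explicit functions of the $u_{k}$; the adjunction identity $K_{Z}\bar{L}_{23}+\bar{L}_{23}^{2}=-2$ (resp.\ the inequality $\geq-2$ for the other two lines), combined with integrality and positive-definiteness of the local quadratic forms, then forces $\{u\}=\{0,1\}$ at each singularity the line passes through. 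You instead quotient first by the pseudo-reflection subgroups to exhibit the lines as the two toric boundary axes of the residual $\frac{1}{3}(1,2)$ and $\frac{1}{8}(1,3)$ singularities and read off transverse intersection with opposite ends of the Hirzebruch--Jung chains. Both arguments are sound. Yours requires the honest local verification you flag yourself (distinct axes, opposite ends of the chain, transversality) but in exchange determines which exceptional curve each line meets; the paper's numerical argument avoids all local geometry, but only pins down the unordered pair $\{u_{1},u_{2}\}=\{0,1\}$ --- so the labelling of $C_{1},C_{2}$ (resp.\ $C_{3},C_{4}$) is a convention in either treatment --- and it needs to know in advance that $\bar{L}_{23}$ is a smooth rational curve (it is, being the strict transform of $L_{23}'/\ZZ_{24}\cong\PP^{1}$) for adjunction to read $-2$ exactly. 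Your first paragraph (the degree-$24$ ramification bookkeeping giving $L_{23}^{2}=\frac{1}{24}$, $L_{13}^{2}=\frac{3}{8}$, $L_{12}^{2}=\frac{8}{3}$) and your final pullback computation coincide with the paper's.
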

\begin{figure}[h]
\caption{\label{fig:Image-of-theTriangle}Image of the lines $L_{ij}'$ in
the desingularisation of $\protect\PP(1,3,8)$}

\includegraphics[scale=0.9]{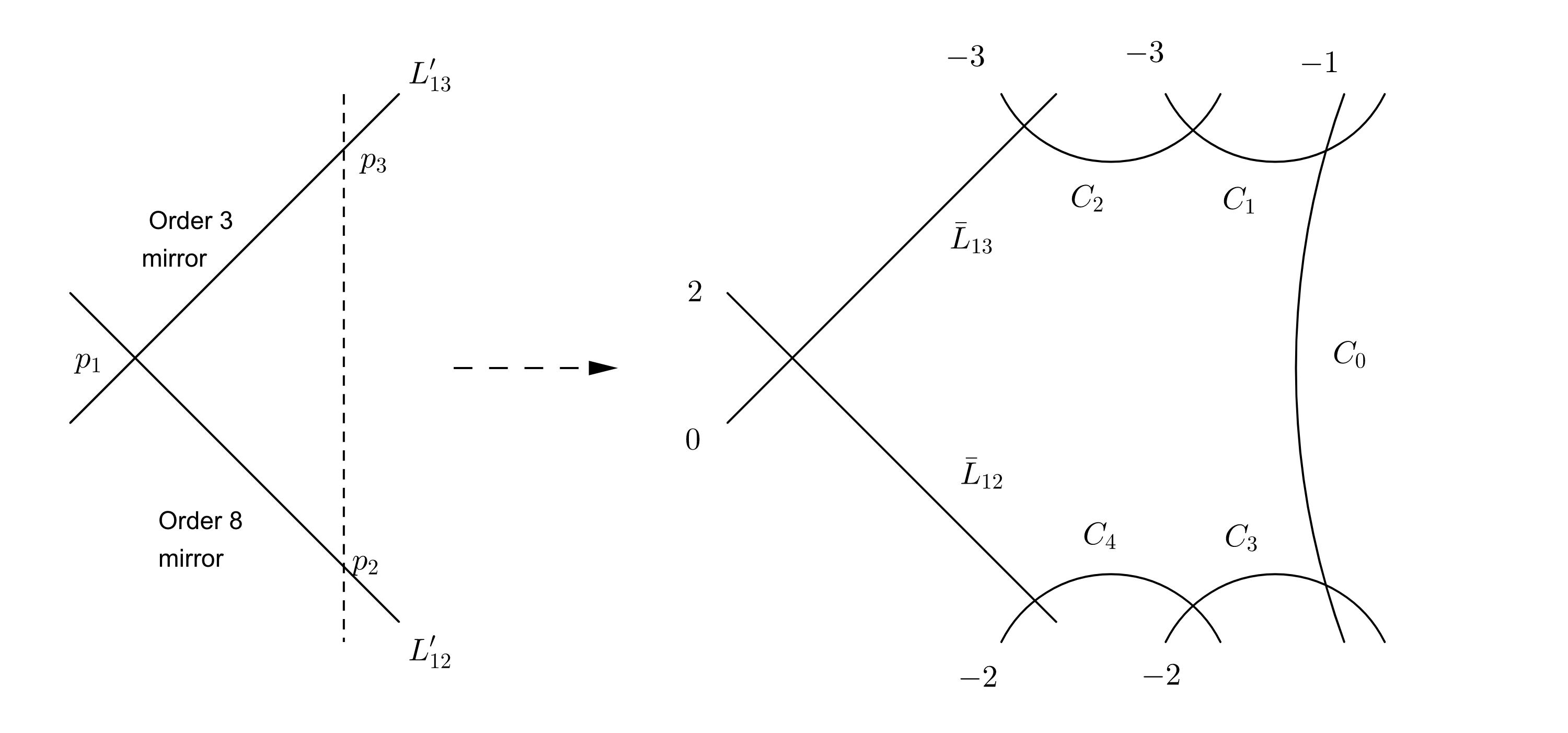}
\end{figure}

\begin{proof}
On $\PP(1,3,8)$ one has $L_{23}^{2}=\frac{1}{24}L_{23}'^{2}=\frac{1}{24}$.
Recall that the resolution map is $p:Z\to\PP(1,3,8)$. Let $a_{1},\dots,a_{4}\in\QQ$
such that 
\[
\bar{L}_{23}=p^{*}L_{23}-\sum_{i=1}^{4}a_{i}C_{i},
\]
then $C_{i}p^{*}L_{23}=0$ for $i\in\{1,2,3,4\}$. Let $u_{i}\in\NN$
such that $C_{i}\bar{L}_{23}=u_{i}$. One gets that 
\[
\begin{array}{cccc}
\left(\begin{array}{c}
a_{1}\\
a_{2}
\end{array}\right)=\dfrac{1}{8}\left(\begin{array}{cc}
3 & 1\\
1 & 3
\end{array}\right)\left(\begin{array}{c}
u_{1}\\
u_{2}
\end{array}\right) &  &  & \left(\begin{array}{c}
a_{3}\\
a_{4}
\end{array}\right)=\dfrac{1}{3}\left(\begin{array}{cc}
2 & 1\\
1 & 2
\end{array}\right)\left(\begin{array}{c}
u_{3}\\
u_{4}
\end{array}\right)\end{array}.
\]
We have $\pi^{*}K_{\PP(1,3,8)}=-12L'_{23}$, thus
\[
K_{\PP(1,3,8)}L_{23}=\frac{1}{24}(-12L_{23}'\cdot L_{23}')=-\frac{1}{2}.
\]
Since $K_{Z}=p^{*}K_{\PP(1,3,8)}-\frac{1}{2}(C_{1}+C_{2})$, we get
\[
\begin{array}{c}
K_{Z}\bar{L}_{23}=\left(p^{*}K_{\PP(1,3,8)}-\frac{1}{2}(C_{1}+C_{2})\right)\left(p^{*}L-\sum_{i=1}^{4}a_{i}C_{i}\right)\\
=-\frac{1}{2}-a_{1}-a_{2}=-\frac{1}{2}(1+u_{1}+u_{2}),
\end{array}
\]
which is in $\ZZ$, with $u_{1},u_{2}\in\NN$. One computes that
\[
\bar{L}_{23}^{2}=\frac{1}{24}-\frac{1}{8}(3u_{1}^{2}+3u_{2}^{2}+2u_{1}u_{2})-\frac{2}{3}(u_{3}^{2}+u_{3}u_{4}+u_{4}^{2})\in\ZZ_{\leq 0}.
\]
Since $K_{Z}\bar{L}_{23}+\bar{L}_{23}^{2}=-2$, the only possibility
is 
\[
\{u_{1},u_{2}\}=\{0,1\},\,\{u_{3},u_{4}\}=\{0,1\},
\]
which gives the intersection numbers with $\bar{L}_{23}$. 

For the curve $L_{13}$, one has $L_{13}K_{\PP(1,3,8)}=-\frac{3}{2}$
and $L_{13}^{2}=\frac{3}{8}$. Let $u:=\bar{L}_{13}C_{1}\in\NN,\,v:=\bar{L}_{13}C_{2}\in\NN$.
Then one similarly computes that 
\[
\bar{L}_{13}K_{Z}=-\frac{1}{2}(3+u+v)\leq-\frac{3}{2}
\]
and 
\[
\bar{L}_{13}^{2}=\frac{1}{8}(3-3u^{2}-3v^{2}-2uv)\leq\frac{3}{8}.
\]
Therefore $\bar{L}_{13}^{2}+K_{Z}\bar{L}_{13}\leq-\frac{9}{8}$ and
since $\bar{L}_{13}^{2}+K_{Z}\bar{L}_{13}\geq-2$, the only solution
is $\{u,v\}=\{0,1\}$, thus $\bar{L}_{13}^{2}=0$ and $\bar{L}_{13}K_{Z}=-2$.

For the curve $L_{12}$, which does not go through the $\frac{1}{8}(1,3)$
singularity, one has 
\[
\bar{L}_{12}K_{Z}=L_{12}K_{\PP(1,3,8)}=-4
\]
 and $L_{12}^{2}=\frac{8}{3}$. Let $w:=\bar{L}_{12}C_{3},\,t:=\bar{L}_{12}C_{4}$.
Then 
\[
\bar{L}_{12}^{2}=\frac{1}{3}(8-2w^{2}-2t^{2}-2wt)\leq\frac{8}{3}.
\]
Therefore $\bar{L}_{12}^{2}+K_{Z}\bar{L}_{12}\leq-\frac{4}{3}$ and
the only solution is $\{w,t\}=\{0,1\}$, thus $\bar{L}_{12}^{2}=2$.
\end{proof}

\subsection{From $\PP(1,3,8)$ to the Hirzebruch surface $\mathbb{F}_3$ and back}
\label{subsub: from p138 to hiz}

By contracting the $(-1)$-curve $C_{0}:=\bar{L}_{23}$ and then the
other $(-1)$-curves appearing from the configuration $C_{1},\dots,C_{4},\bar{L},$
one gets a rational surface with 
\[
K^{2}=2c_{2}=8
\]
containing (depending on the choice of the $(-1)$-curves 
we contract) a curve which either is a $(-2)$-curve
or a $(-3)$-curve. Thus that surface is one of the Hirzebruch surfaces $\FF_{2}$ or $\FF_{3}$.
 Conversely
one can reverse the process and obtain the surface $\PP(1,3,8)$ by
performing a sequence of blow-ups and blow-downs. This process is
unique: this follows from the fact that the automorphism group of a Hirzebruch
surface $\FF_{n},\,n\geq1$ has two orbits, which are the unique $(-n)$-curve
and its open complement (see e.g. \cite{Blanc}). In the sequel, only the connection between $\PP(1,3,8)$ and $\mathbb{F}_3$ will be used.

\section{The Bolza genus $2$ curve in $A$ and its image by the quotient
map \label{sec:section4}} 
In this section we prove that $A/G_{48}$ is isomorphic to $\PP(1,3,8)$.

Let us consider the genus $2$ curve $\t$ whose affine model is 
\begin{equation}
y^{2}=x^{5}-x.\label{eq:affine model}
\end{equation}
It was proved by Bolza \cite{Bol} that the automorphism group of $\t$ is $GL_{2}(\FF_{3})\simeq G_{48}$
and $\t$ is the unique genus $2$ curve with such an automorphism
group.

The automorphisms of $\t$ are generated by the hyperelliptic involution $\l$
and the lift of the automorphism group $G$ of $\PP^{1}$ that preserves
the set of $6$ branch points $0,\,\infty,\,\pm1,\,\pm i$ of the
canonical map $\t\to\PP^{1}$ (i.e. the set of points which are fixed by $\l$). 
Note that actually, any map of degree 2 from $\theta$ to $\PP^1$ is the composition of this map with an automorphism of $\PP^1$. This is a consequence of the two following facts: on the one hand the 6 ramification points (by the Riemann-Hurwitz formula) of such a map are Weierstrass points, and on the other hand the genus 2 curve $\t$ has exactly 6 Weierstrass points.

By the universal property of the Abel-Jacobi map, the group $GL_{2}(\FF_{3})$
acts naturally on the 
Jacobian variety $J(\t)$ of $\t$, 
the action on $\t$ and 
$J(\t)$ being equivariant. 

There is only one Abelian surface
with an action of $GL_{2}(\FF_{3})$, which is $A=E\times E$, where
$E=\CC/\ZZ[i\sqrt{2}]$ as above (see Fujiki \cite{Fujiki} or \cite{BirLange}). We identify $J(\t)$ with $A$.
There are up to conjugation only two possible actions
of $GL_{2}(\FF_{3})$ on $A$ (see \cite{Popov}):\\
a) The action of $G_{48}\simeq GL_{2}(\FF_{3})$ which is described
in sub-section \ref{subsec:The-images-ofmirr}; it has no global
fixed points; \\
b) The one obtained by forgetting the translation part of that action.
That second action globally fixes the $0$ point in $A$. 

Let $\a:\t\hookrightarrow J(\t)=A$ be the embedding of $\t$ sending the point
at infinity of the affine model (\ref{eq:affine model}) to $0$;
we identify $\t$ with its image. 

 Note that the morphism $\theta\times\theta\rightarrow A$, $(x,y)\mapsto [y]-[x]\in{\rm Div}_0(\theta)\simeq A$ is onto since  $\theta\times\theta$ and $A$ are both two-dimensional. Actually, this map has generic degree 2 and contracts the diagonal. Indeed, assume that $[y]-[x]=[y']-[x']$ i.e. $[y]+[x']-[x]-[y']=0\in{\rm Div}_0(\theta)$. If $y'=y$ then $x'=x$ (and conversely) because there is no degree 1 map from $\t$ to $\PP^1$. In the same way, $y=x$ iff $y'=x'$. In the remaining cases, 
there exists a function of degree 2 from $\t$ to $\PP^1$ whose zeroes are $y$ and $x'$ and poles are $x$ and $y'$. But by the remark above, we must have $x'=\l(y)$ and $y'=\l(x)$. Conversely, by the same argument, it is clear that for all $x$ and $y$ in $\t$, $[\l(y)]-[\l(x)]=[x]-[y]$. 

This also implies that the points of the type $[y]-[x]$ with $x$ and $y$ being distinct Weierstrass points are exactly the 2-torsion points of $A$. Indeed, since there are 6 Weierstrass points on $\t$, we have 15 points of that type in $A$ satisfying $[y]-[x]=[\l(x)]-[\l(y)]=[x]-[y]$ i.e. they are 2-torsion points.

The induced linear action b) is given by $g([y]-[x])=[g(y)]-[g(x)]$ for which $0\in{\rm Div}_0(\theta)$ is a fixed point.

If we fix the base point $\infty\in\theta$ then for each $y\in\theta$, $\alpha(x)=[x]-[\infty]$. The induced action of $g\in\aut(\t)$ on $A$ is then given by $g([y]-[x])=[g(y)]-[g(x)]+[g(\infty)]-[\infty]$. This is indeed the only action of $\aut(\t)$ on $A$ commuting with $\alpha$.

\begin{lem}
\label{lem:The-action-of}The action of $GL_{2}(\FF_{3})$ on $A$
inducing the action of $\aut(\t)$ on the curve $\t\hookrightarrow A$
has no global fixed points. 
\end{lem}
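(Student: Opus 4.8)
The plan is to distinguish between the two possible actions of $GL_2(\FF_3)$ on $A$ described in the text, namely the affine action a) with no global fixed points and the linear action b) which fixes $0$. The lemma asserts that the action induced by $\aut(\t)$ via the chosen embedding $\a$ is of type a). Since the preceding discussion has already shown that the action of $g\in\aut(\t)$ on $A=\Div_0(\t)$ is given by $g([y]-[x])=[g(y)]-[g(x)]$ on the intrinsic model (where $0\in\Div_0(\t)$ is fixed), but the action compatible with a \emph{fixed} base point $\infty$ acquires the extra translation term $[g(\infty)]-[\infty]$, the whole question reduces to exhibiting a single $g\in\aut(\t)$ for which this translation term is nonzero, or more precisely to showing the induced affine action has no common fixed point.

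First I would make the identification concrete: under $\a(x)=[x]-[\infty]$, an automorphism $g$ of $\t$ acts on $A$ by the affine map $v\mapsto g_*v + t_g$, where $g_*$ is the linear part (the action b) on $\Div_0(\t)$) and $t_g=[g(\infty)]-[\infty]=\a(g(\infty))$ is the translation part. A global fixed point would be a single $v_0\in A$ with $g_*v_0+t_g=v_0$ for \emph{every} $g$. The strategy is to extract from the explicit Bolza geometry enough translation vectors $t_g$ to force a contradiction. The point $\infty$ is a Weierstrass point, and by the remark already established, differences of distinct Weierstrass points are precisely the $2$-torsion points of $A$; hence for any $g$, $t_g=\a(g(\infty))=[g(\infty)]-[\infty]$ is a $2$-torsion point of $A$ (it is the difference of two Weierstrass points, since $\aut(\t)$ permutes the Weierstrass points). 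This is the key structural input and I would lean on it heavily.

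The main step is then a counting/group-theoretic argument. I would note that the subgroup of $\aut(\t)=GL_2(\FF_3)$ fixing the Weierstrass point $\infty$ has index equal to the size of the orbit of $\infty$ among the $6$ Weierstrass points; since $\aut(\t)$ acts on these $6$ points through $PGL_2(\FF_3)\simeq S_4$ acting on the vertices/special configuration, the orbit is all $6$ points, so the stabilizer of $\infty$ has order $48/6=8$. The elements $g$ outside this stabilizer move $\infty$ to another Weierstrass point, producing a nonzero $2$-torsion translation $t_g$. The crux is to show these translations are genuinely incompatible with a common fixed point: if $v_0$ were fixed by all $g$, then applying two elements $g,h$ and subtracting would relate the $t_g$ to the linear parts $g_*$, and I would show that the linear action b) alone already has $0$ as its unique fixed point structure in a way that cannot absorb all the nonzero $t_g$. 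Concretely, it suffices to find two automorphisms $g,h$ with $g_*=h_*$ (same linear part) but $t_g\neq t_h$: then a common fixed point would force $t_g=t_h$, a contradiction.

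The hard part will be certifying that two such automorphisms exist, i.e. that the affine action does not collapse to a conjugate of the linear action after a change of origin. Equivalently, I must rule out that the cocycle $g\mapsto t_g$ is a coboundary $g\mapsto g_*v_0-v_0$ for some $v_0\in A$. I expect to settle this by direct computation in the Jacobian: choosing an explicit generator of $\aut(\t)$ — for instance the automorphism $(x,y)\mapsto(-x,iy)$ or $(x,y)\mapsto(1/x, y/x^3)$ coming from the Möbius transformations preserving $\{0,\infty,\pm1,\pm i\}$ — and computing $g(\infty)$ together with the induced linear map $g_*$ on $A=E\times E$, one sees that the collection of translation vectors $\{t_g\}$ spans the full $2$-torsion $A[2]$ and therefore cannot all equal $g_*v_0-v_0$ for a single $v_0$. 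Since a global fixed point of the affine action would in particular be fixed by the translations-difference, and these translations are nontrivial $2$-torsion elements that are not simultaneously coboundaries, no global fixed point can exist, and the action is forced to be the affine action a) of $G_{48}$, as claimed.
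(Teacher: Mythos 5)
Your reduction of the lemma to the statement that the cocycle $g\mapsto t_g=[g(\infty)]-[\infty]$ is not a coboundary $g\mapsto v_0-g_*v_0$ is correct, and so is the observation that each $t_g$ lies in $A[2]$. But neither of the two concrete strategies you offer to certify this can work. The first one --- finding $g\neq h$ in $\aut(\t)$ with $g_*=h_*$ but $t_g\neq t_h$ --- is vacuous: no such pair exists. If $g_*=h_*$ then $gh^{-1}$ acts on $A$ as a pure translation preserving the embedded curve $\t$, which is a principal polarization, so the translation is trivial and $g=h$ (equivalently, the representation of $\aut(\t)$ on $H^0(\Omega^1_\t)^*$ is faithful: an automorphism acting trivially there induces the identity on the canonical $\PP^1$, hence is $1$ or the hyperelliptic involution, and the latter acts as $-1$). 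The second strategy --- ``the translations $\{t_g\}$ span $A[2]$, therefore they cannot all be of the form $v_0-g_*v_0$'' --- is a non sequitur: if $v_0$ is itself a $2$-torsion point, the coboundary $g\mapsto v_0-g_*v_0=v_0+g_*v_0$ also takes values in $A[2]$ and may perfectly well span it as $g$ runs over $GL_2(\FF_3)$. Spanning $A[2]$ is therefore compatible with the cocycle being a coboundary, and your argument does not exclude a global fixed point.

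The paper closes the gap differently and more cheaply. Since the hyperelliptic involution $\l$ fixes $\infty$, it acts on $A$ with no translation part, i.e.\ as $v\mapsto -v$, so any global fixed point must lie in $A[2]$; by the earlier discussion these points are $0$ together with the $15$ differences $[y]-[x]$ of distinct Weierstrass points. It then suffices, for each such pair $(x,y)$, to exhibit a $g\in\aut(\t)$ with $g(\infty)=\infty$ (hence acting linearly) and $[g(y)]-[g(x)]\neq[y]-[x]$, which is immediate from the action of the reduced automorphism group on the six branch points $\{0,\infty,\pm1,\pm i\}$. If you want to salvage your cohomological framing, you would have to test the coboundary condition only on the finitely many candidate $v_0\in A[2]$ singled out by $\l$ --- which is essentially the paper's argument in disguise.
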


\begin{proof}
The fixed points on $A$ for the action of the hyperelliptic involution $\l$ are its points of 2-torsion (and 0). Indeed, $\l([y]-[x])=[\l(y)]-[\l(x)]\in {\rm Div}_0(\theta)$ since $\infty\in\t$ is fixed by $\l$ and, as a consequence of the discussion above, if $[y]-[x]=[\l(y)]-[\l(x)]$ then either $y=x$ or $y=\l(x)$ i.e. $[y]-[x]=[x]-[y]$ and we saw that this implies that $x$ and $y$ are Weierstrass points. 

But for any pair $(x,y)$ of distinct Weierstrass points, it is easy to find $g\in\aut(\t)$ (lifting an automorphism of $\PP^1$) such that $g(\infty)=\infty$ but $[g(y)]-[g(x)]\not = [y]-[x]$.
\end{proof}

For $t\in A$, let $\t_{t}$ be the curve $\t_{t}=t+\t$.
The previous result does not depend on the choice of the embedding $\t\hookrightarrow A$:
indeed the group of automorphisms acting on $A$ and preserving $\t_{t}$
is conjugated by the translation $x\mapsto x+t$ to the group of automorphisms
acting on $A$ and preserving $\t$.

We denote by $H_{48}$ the order $48$ group
acting on $A$ and inducing the automorphism group of the curve $\t\hookrightarrow A$
by restriction.  
As a consequence of Lemma \ref{lem:The-action-of}, we get:
\begin{cor}
There exists an isomorphism between $H_{48}$ and $G_{48}$. That
isomorphism is induced by an automorphism $g$ of the surface $A$
such that $H_{48}=gG_{48}g^{-1}$.
\end{cor}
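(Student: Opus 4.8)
The plan is to combine the classification of actions of $GL_{2}(\FF_{3})$ on $A$ with Lemma \ref{lem:The-action-of}. Both $G_{48}$ and $H_{48}$ are subgroups of $\aut(A)$ that are abstractly isomorphic to $GL_{2}(\FF_{3})$ and that realize a faithful action of this group on $A$ by automorphisms. By the result of Popov cited above, every such action is conjugate, by an element of the full automorphism group $\aut(A)$ (generated by the translations and the linear automorphisms fixing $0$), to exactly one of the two model actions (a) and (b). So the whole argument reduces to deciding which of the two classes $H_{48}$ belongs to.

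First I would record that the two model classes are distinguished by the presence or absence of a global fixed point. Action (b) fixes $0\in A$; consequently, for any $g\in\aut(A)$, every conjugate $g\,(\mathrm{b})\,g^{-1}$ fixes the point $g(0)$. Thus any subgroup of $\aut(A)$ conjugate to the type (b) action necessarily has a global fixed point, because fixed-point sets are merely permuted under conjugation inside $\aut(A)$. The type (a) action $G_{48}$, on the other hand, has no global fixed point by construction.

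Next I would invoke Lemma \ref{lem:The-action-of}, which asserts precisely that the action of $H_{48}$ on $A$ has no global fixed point. Together with the previous paragraph, this excludes the possibility that $H_{48}$ is conjugate to the type (b) action. Hence $H_{48}$ must be conjugate to the type (a) action, that is, to $G_{48}$: there exists $g\in\aut(A)$ with $H_{48}=gG_{48}g^{-1}$. Conjugation by $g$ then provides the desired group isomorphism $G_{48}\to H_{48}$, $\gamma\mapsto g\gamma g^{-1}$, which by construction is induced by the automorphism $g$ of $A$.

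The only delicate point, which I expect to be the crux, is the correct use of Popov's classification: one must be sure that the conjugating element can be taken to be a genuine automorphism of the abelian surface $A$ (translations included), not merely an abstract group isomorphism, and that the fixed-point property is a conjugation invariant in $\aut(A)$. Once this is in place, the fixed-point dichotomy together with Lemma \ref{lem:The-action-of} forces the conclusion with no further computation; all the geometric work has in fact already been carried out in establishing that $H_{48}$ acts freely of fixed points.
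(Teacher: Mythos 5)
Your argument is correct and is exactly the one the paper intends: the corollary is stated as an immediate consequence of Lemma \ref{lem:The-action-of}, the implicit reasoning being precisely your dichotomy -- Popov's classification gives only the two conjugacy classes (a) and (b) of actions of $GL_2(\FF_3)$ on $A$, class (b) and all its conjugates in $\aut(A)$ have a global fixed point, and the lemma rules this out for $H_{48}$. You have merely made explicit the steps the authors leave to the reader.
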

By \cite[Theorem (0.3)]{Boxall}, the embedding $\a:\t\hookrightarrow A$ is such
that the torsion points of $A$ contained in $\t$ are $16$ torsion
points of order $6$, $5$ torsion points of order $2$ and the origin,
moreover the $x$-coordinates of the $22$ torsion points on $\t$
satisfy 
\[
\begin{array}{cc}
\,\,\,x^{4}-4ix^{2}-1=0,\,\, x^{4}+4ix^{2}-1=0\\
x^{5}-x=0,\,\,\, x=\infty.
\end{array}
\]

\begin{prop}
(a) These $22$ torsion points of $\theta$ are not in the mirror of any of the $12$ complex reflections of
$H_{48}$;\\
(b)  Each of these $22$ points has a
non-trivial stabilizer. 

\end{prop}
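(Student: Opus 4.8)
The plan is to analyze the $22$ torsion points listed above by exploiting the explicit structure of the automorphism group $\aut(\t)$ acting on $\t$ via $\PP^1$, together with the concrete description of the embedding $\alpha$. For part (a), the key observation is that the mirrors of the $12$ reflections in $H_{48}$ are elliptic curves in $A$, each of which (via the identification $A\simeq\Jac(\t)$) arises from a fixed-point structure of an order $2$ reflection $\tau$. A point $[y]-[x]$ of $\t$ (with base point $\infty$) lies in the mirror of $\tau$ precisely when $\tau$ fixes that point, which translates into a relation $[\tau(x)]-[\infty]=[x]-[\infty]$ in $\Jac(\t)$, forcing $\tau(x)=x$ on $\t$ or a compensating torsion relation. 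So first I would make explicit, for each reflection $\tau$, the induced automorphism of $\PP^1$ and its two fixed points among the coordinates $x$, and then check directly that none of the four quartic/quintic conditions $x^4\mp 4ix^2-1=0$, $x^5-x=0$, $x=\infty$ is compatible with lying on a mirror. Concretely, I expect that the $x$-coordinates of points on the mirrors satisfy a \emph{different} algebraic condition (coming from the fixed loci of the relevant reflections on $\PP^1$), and the claim becomes the statement that these two sets of algebraic numbers are disjoint; this can be verified by a resultant/gcd computation, which is exactly where a Magma check is natural.

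For part (b), the strategy is more structural: I want to show each of the $22$ torsion points has non-trivial stabilizer in $H_{48}$. The natural approach is to use the orbit structure. Since $\t$ is preserved by the index-one subgroup realizing $\aut(\t)$, and the $22$ points are intrinsically characterized as the torsion points lying on $\t$ (by Boxall's theorem), the group $\aut(\t)\cong H_{48}$ permutes this set of $22$ points. The origin has full stabilizer questions aside, and the remaining $21$ points ($16$ of order $6$ and $5$ of order $2$) should fall into a small number of orbits. The plan is to count: $48$ acting on $21$ points means the orbit sizes divide $48$, and if every orbit is a proper quotient $48/|\mathrm{Stab}|$ with $|\mathrm{Stab}|>1$, we are done. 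So I would identify the orbits explicitly—e.g. showing the $5$ order-$2$ points form one orbit (stabilizer of order dividing $48/5$, which is impossible unless the orbit is smaller, so in fact the stabilizers are forced to be non-trivial by the arithmetic $48 = |\mathrm{orbit}|\cdot|\mathrm{Stab}|$ with $|\mathrm{orbit}|\le 16$ or $\le 5$).

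The cleanest argument for (b) is therefore purely numerical: any orbit of $\aut(\t)$ inside a set of at most $16$ points has size at most $16<48$, so by the orbit-stabilizer theorem every such point has stabilizer of order at least $48/16=3>1$; similarly for the $5$ order-$2$ points the stabilizer has order at least $48/5$, hence at least $10$. I would state this after confirming that $\aut(\t)$ indeed acts on the set of $22$ torsion points (which follows because the set is the intrinsic intersection $\t\cap A[\mathrm{tors}]$ and $\aut(\t)$ preserves both $\t$ and the torsion subgroup). The main obstacle is part (a): ruling out that any of the $22$ points lies on a mirror requires knowing the mirrors explicitly enough to compare $x$-coordinates, and the subtlety is that a torsion point could conceivably lie on a mirror even when the underlying $\PP^1$-automorphism does not fix the corresponding branch point, because the mirror is a translate of an elliptic curve through torsion. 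I would handle this by pinning down each mirror as a specific coset (using the transitivity of $G_{48}$ on the $12$ mirrors and the list in Deraux's Table~1) and then verifying the disjointness of the defining polynomials by an explicit computation.
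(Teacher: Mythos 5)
Your proposal is correct in outline, and the two halves deserve different verdicts. For part (a) you are essentially reproducing the paper's argument: a point $P\in\t\subset A$ lies on the mirror of a reflection $\tau$ exactly when $\tau(P)=P$, and since the action of $H_{48}$ on $A$ restricts to $\aut(\t)$ on $\t$, this is precisely the condition that $P$ be a fixed point of the non-hyperelliptic involution $\tau|_{\t}$ — so the ``compensating torsion relation'' you worry about cannot occur, and the problem reduces to comparing $x$-coordinates as you intend. The paper is more economical than your plan: it writes down a single explicit involution (from Cardona et al.), computes its two fixed points $x=i(1\pm\sqrt{2})$, observes these are not roots of the Boxall--Grant polynomials, and concludes for all $12$ mirrors by transitivity of $H_{48}$ on the mirrors together with invariance of the $22$-point set; no case-by-case resultant computation over Deraux's Table~1 is needed. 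For part (b) your route is genuinely different from the paper's and, for the literal statement, cleaner: the paper exhibits explicit stabilizing automorphisms (the hyperelliptic involution for the Weierstrass images, an explicit order-$3$ automorphism fixing one $6$-torsion point) and then invokes Deraux's classification of the two isolated-fixed-point orbits to treat the remaining $15$ points, whereas your orbit--stabilizer count avoids all explicit formulas. Two small repairs are needed to make your count airtight: (i) invariance of the $22$-point set under $H_{48}$ is not automatic from ``$\aut(\t)$ preserves the torsion subgroup,'' because the action on $A$ is affine; you must note that the translation parts are the differences $[g(\infty)]-[\infty]$ of Weierstrass points, hence $2$-torsion; (ii) the invariant subset containing the order-$2$ points is the $6$-element image of the Weierstrass locus (origin included), not the $5$ order-$2$ points alone — an automorphism can swap the origin with an order-$2$ point — so the correct bounds are stabilizers of order at least $8$ on that set and at least $3$ on the $16$-element set of order-$6$ points. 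Finally, be aware that the paper's longer proof of (b) establishes more than non-triviality, namely that each $6$-torsion point is an isolated fixed point of every non-trivial element of its stabilizer; that stronger fact (which combines part (a) with Deraux's orbit data) is what is actually used afterwards to identify the images of these points with the $A_2$ and $\frac{1}{8}(1,3)$ singularities, and your counting argument alone does not deliver it.
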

\begin{proof}
Let us prove part $(a)$. 

The hyperelliptic involution is given by $(x,y)\to(x,-y)$. By \cite{Cardona},
the rational map 
\[
v:(x,y)\mapsto\Bigl(-\frac{x+i}{ix+1},\sqrt{2}\frac{i-1}{(ix+1)^{3}}y\Bigr)
\]
defines a non-hyperelliptic involution $v$ on $\t$. The $x$-coordinates
of the fixed point set of $v$ are $x_{\pm}=i(1\pm\sqrt{2})$. These
coordinates $x_{\pm}$ are not among the $x$-coordinates of the $22$
torsion points in $\t$. Let $\mathbf{v}$ be the automorphism of
$A$ induced by $v$. The fixed point set of $\mathbf{v}$ is a smooth
genus $1$ curve $E_{v}$ (a mirror) and we have just proved that $E_{v}$
contains no torsion points of $\t$. By transitivity of the group
$H_{48}$ on its set of $12$ non-hyperelliptic involutions, one gets that
no mirror contains any of the $22$ torsion points. 

Let us prove part $(b)$. 

The six $2$-torsion points are the Weierstrass points of the curve
$\theta$, they are fixed by the hyperelliptic involution (whose action
on $A$ has only $16$ fixed points).

The transformation 
\[
w:(x,y)\mapsto \Bigl(\frac{(1+i)x-(1+i)}{(1-i)x+(1-i)},\,-\frac{1}{((1-i)x+(1-i))^{3}}y\Bigr)
\]
defines an order $3$ automorphism of $\t$, which acts symplectically
on $A$ and one computes that it fixes a torsion point $p_{0}=(x_{0},y_{0})$ on $\t$ 
with $x_{0}$ such that $x_{0}^{4}+4ix_{0}^{2}-1=0$, i.e. it is an
order $6$ torsion point. This torsion point is an isolated fixed
point for each non-trivial element of its stabilizer (since by part (a), it is not on a mirror).

Recall that by \cite[Table 2]{Deraux}, there are exactly two orbits of points of respective orders $6$ and $16$
with non-trivial stabilizer under $G_{48}$ which are isolated fixed points of the non-trivial elements of their
stabilizer (by a direct computation one can check that these two orbits are $16$ points of order $6$ and $6$
points of order $2$).
 Since $H_{48}$ is conjugate to $G_{48}$,
the $15$ other $6$-torsion points on $\t$ are also isolated fixed
points for each non-trivial element of their stabilizer.
\end{proof}
Since one can change the embedding  $\t\hookrightarrow A$ by composing
with the automorphism $g$ such that $H_{48}=gG_{48}g^{-1}$, let
us identify $H_{48}$ with $G_{48}$. 

By sub-section \ref{subsec:The-images-ofmirr} (or \cite{Deraux}),
the images of the $22$ torsion points of $\t$ on the quotient surface
$A/G_{48}$ give the singularities $A_{2}$ and $\frac{1}{8}(1,3)$. 

Let $m$ be the mirror of one of the $12$ complex reflections in $G_{48}$.
\begin{lem}~\label{lemma:intersectionbolzamirror}
One has $\t\cdot m=2$.
\end{lem}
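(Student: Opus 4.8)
The plan is to compute the intersection number by realizing $m$ as the fixed locus of an involution of $A$ that preserves $\t$, and then counting the (transverse) fixed points of the induced involution on the curve. Since $G_{48}$ acts transitively on the twelve mirrors and intersection numbers on $A$ are invariant under automorphisms, it suffices to treat one convenient mirror. I would take $m=E_v$, the fixed locus of the involution $\mathbf v$ of $A$ induced by the explicit non-hyperelliptic involution $v$ of $\t$ used in the previous proof. The first point to record is that $\mathbf v$ preserves $\t$ and restricts to $v$ on it: with the base point $\infty$ one has $\a(x)=[x]-[\infty]$, and the formula $g([y]-[x])=[g(y)]-[g(x)]+[g(\infty)]-[\infty]$ for the action gives $\mathbf v(\a(x))=\a(v(x))$. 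Hence, set-theoretically, $\t\cap E_v=\a(\mathrm{Fix}(v))$.

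Next I would count the fixed points of $v$ on $\t$. Because $v$ is non-hyperelliptic, the quotient $\t/v$ has genus $1$, so Riemann--Hurwitz forces $v$ to have exactly two fixed points; concretely these lie over the two values $x_{\pm}=i(1\pm\sqrt2)$ appearing in the previous proof, and a short analysis of the two fibres (each point of a fibre is fixed, or the two points are exchanged, according to the sign of $v$ on the $y$-coordinate) confirms the count is $2$. Thus $\t\cap E_v$ consists of two points, both lying on the mirror $E_v$.

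The crucial step is transversality at these two points, which upgrades the set-theoretic count to the actual intersection number. At a fixed point $p$, the differential $d\mathbf v_p$ is an order-$2$ complex reflection on $T_pA\cong\CC^2$, with $(+1)$-eigenspace equal to $T_pE_v$ and a complementary $(-1)$-eigenspace. Since $v$ is a nontrivial involution of the curve $\t$, its differential at $p$ is $-1$ on the line $T_p\t$; hence $T_p\t$ is the $(-1)$-eigenspace of $d\mathbf v_p$, which is transverse to $T_pE_v$. Each intersection point therefore contributes local multiplicity $1$, giving $\t\cdot m=\t\cdot E_v=2$.

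The main obstacle is precisely this transversality verification: one has to be certain that $\t$ is not tangent to the mirror at the two fixed points, since otherwise the two geometric points could account for an intersection number larger than $2$. The eigenvalue dichotomy above settles it cleanly, the only input being that a nontrivial automorphism of finite order acts on the tangent line of a curve at a fixed point by a root of unity $\neq 1$, so for an involution this factor is forced to be $-1$; this is also where the non-hyperellipticity of $v$ (i.e. $v\neq\l$) is used, to guarantee that $E_v$ really is a one-dimensional mirror meeting $\t$ in the expected way.
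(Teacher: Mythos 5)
Your proof is correct and follows essentially the same route as the paper's: both identify $\t\cdot m$ with the number of fixed points of the mirror involution restricted to $\t$ and obtain the count $2$ from the Riemann--Hurwitz formula for the degree-$2$ quotient of $\t$ by that involution, which is an elliptic curve. The only substantive difference is that you make explicit the transversality of $\t$ and $m$ at the two fixed points (via the $\pm 1$-eigenspace decomposition of the differential), a step the paper leaves implicit.
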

\begin{proof}
The intersection number $\t\cdot m$ is the number of fixed points
of the involution $\iota_{m}$ with mirror $m$ restricted to $\t$.
Since $\iota_{m}$ fixes exactly one holomorphic form, the quotient
of $\t$ by $\iota_{m}$ is an elliptic curve, thus by the Hurwitz formula
$\t\cdot m=2$.
\end{proof}
Let $\t_{48}$ be the image of $\t$ in $A/G_{48}$. One has:
\begin{prop} \label{M48etC0}
The strict transform $C_{0}$ of $\t_{48}$ by the resolution $X_{48}\to A/G_{48}$
is a $(-1)$-curve and we have $\tilde{M}_{48}C_0=1$.
\end{prop}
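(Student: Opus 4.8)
The plan is to reduce the numerical part of the statement to the computation already performed for the line $L_{23}$ in Proposition~\ref{prop:The-curve-L23 is a (-1)}, after checking that $\t_{48}$ has exactly the same numerical characters as $L_{23}$, and then to establish separately that the strict transform $C_0$ is a smooth rational curve. First I would record the numerical inputs. Since $\t$ is the theta divisor of the principally polarized abelian surface $A=J(\t)$ one has $\t^{2}=2$. The curve $\t_{48}$ is not a component of the branch locus $M_{48}$ (the $22$ torsion points map to the two singular points and $\t$ meets the mirrors transversally, so $\t_{48}\neq M_{48}$); hence $\pi$ is étale at the generic point of $\t$ and $\pi^{*}\t_{48}=\t$. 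The projection formula then gives $48\,\t_{48}^{2}=\t^{2}=2$, i.e. $\t_{48}^{2}=\frac{1}{24}$. Similarly, using $\pi^{*}M_{48}=2M$ (from $M=\pi^{*}(\frac12 M_{48})$), $\pi^{*}\t_{48}=\t$ and $M\cdot\t=\sum_{i=1}^{12}M_{i}\cdot\t=24$ (Lemma~\ref{lemma:intersectionbolzamirror}), the projection formula yields $48\,(M_{48}\cdot\t_{48})=2\,M\cdot\t=48$, so $M_{48}\cdot\t_{48}=1$ and $K_{A/G_{48}}\cdot\t_{48}=-\tfrac12 M_{48}\cdot\t_{48}=-\tfrac12$. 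These coincide with the values $L_{23}^{2}=\frac1{24}$ and $K_{\PP(1,3,8)}\cdot L_{23}=-\tfrac12$ used in Proposition~\ref{prop:The-curve-L23 is a (-1)}.

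The equality $\tilde M_{48}\cdot C_0=1$ is then immediate: writing $C_0=p^{*}\t_{48}-\sum_{i=1}^{4}a_iC_i$ and using $\tilde M_{48}=p^{*}M_{48}$ together with $C_i\cdot p^{*}M_{48}=0$, one obtains $\tilde M_{48}\cdot C_0=M_{48}\cdot\t_{48}=1$, independently of the $a_i$. For $C_0^{2}$, I would set $u_i:=C_i\cdot C_0\in\NN$ and solve for the $a_i$ exactly as in Proposition~\ref{prop:The-curve-L23 is a (-1)}, the intersection matrices of $C_1,C_2$ and of $C_3,C_4$ being the same. Because $\t_{48}^{2}$ and $K_{A/G_{48}}\cdot\t_{48}$ agree with the corresponding numbers for $L_{23}$, the resulting expressions
\[
C_0^{2}=\frac1{24}-\frac18(3u_1^{2}+3u_2^{2}+2u_1u_2)-\frac23(u_3^{2}+u_4^{2}+u_3u_4),\qquad K_{X_{48}}\cdot C_0=-\tfrac12(1+u_1+u_2)
\]
are identical to those obtained there for $\bar L_{23}$. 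Provided $C_0$ is a smooth rational curve, adjunction gives $K_{X_{48}}\cdot C_0+C_0^{2}=-2$, and the same integrality argument forces $\{u_1,u_2\}=\{0,1\}$, $\{u_3,u_4\}=\{0,1\}$ and hence $C_0^{2}=-1$.

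The main obstacle is therefore to prove that $C_0$ is a smooth rational curve, i.e. that $p_a(C_0)=0$. Its geometric genus is $0$, since $\t_{48}$ is the quotient $\t/G_{48}\cong\PP^{1}$, so it suffices to show that $\t_{48}$ is smooth (as an immersed curve) at the two singular points of $A/G_{48}$, $p$ being an isomorphism elsewhere. Over each singular point the preimage in $\t$ is a single $G_{48}$-orbit of torsion points (the $16$ points of order $6$ over the $A_2$ point, the $6$ two-torsion points over the $\frac18(1,3)$ point), so $\t_{48}$ is unibranch there. At such a torsion point $p$ the stabilizer is the full cyclic group realizing the quotient singularity and preserves $\t$; hence $T_p\t$ is a stabilizer-invariant line, i.e. an eigendirection of the linear action on $T_pA$. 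A short computation with the invariant monomials (for instance $x^{3},xy,y^{3}$ for $A_2=\{uv=w^{3}\}$, and the degree-$8$ invariants for $\frac18(1,3)$) then shows that, along this eigendirection, the image germ is parametrized with nonvanishing derivative in the local coordinate downstairs, so $\t_{48}$ is smooth at both singular points. This yields $p_a(C_0)=0$, after which the computation above identifies $C_0$ as the desired $(-1)$-curve.
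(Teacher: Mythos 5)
Your proof is correct and follows essentially the same route as the paper: compute $\t_{48}^2=\tfrac1{24}$ and $K_{A/G_{48}}\t_{48}=-\tfrac12$, then reduce to the combinatorial adjunction argument of Proposition \ref{prop:The-curve-L23 is a (-1)}. The only differences are minor: you obtain $M_{48}\t_{48}=1$ by the projection formula where the paper observes that the two points of $m\cap\t$ are identified by the hyperelliptic involution, and you spell out the smoothness of $C_0$ above the two quotient singularities, a point the paper treats only implicitly (via the remark, in the proof of the theorem that follows, that the local situation at the isolated fixed points is the same as for $L_{23}'$).
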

\begin{proof}
One has 
\[
\t_{48}^{2}=\frac{1}{48}\t^{2}=\frac{1}{24}.
\]
Let $\pi:A\to A/G_{48}$ be the quotient map; it is ramified with
order $2$ on the union $M$ of the $12$ mirrors. One has $\pi^{*}(K_{A/G_{48}}+\frac{1}{2}M_{48})=K_{A}=0$,
thus
\[
K_{A/G_{48}}\t_{48}=-\frac{1}{48}(M\t)=-\frac{1}{48}12\cdot2=-\frac{1}{2}.
\]
The curve $\t_{48}$ contains the singularities $\frac{1}{8}(1,3)$
and $A_{2}$ (image respectively of the $2$-torsion points and the
$6$-torsion points of $\t$). We are then left with the same combinatorial
situation as in the computation of $\bar{L}_{23}^{2}$ in Proposition
\ref{prop:The-curve-L23 is a (-1)}, thus we conclude that $C_{0}^{2}=-1$.

The two intersection
points of $m$ and $\theta$ in Lemma~\ref{lemma:intersectionbolzamirror} are permuted by the hyperelliptic involution
of $\theta$ thus $M_{48}\theta_{48}=1$, which implies $\tilde{M}_{48}C_0=1$.
\end{proof}

We obtain:
\begin{thm}
The surface $A/G_{48}$ is isomorphic to $\PP(1,3,8)$.
\end{thm}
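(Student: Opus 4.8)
The plan is to deduce the isomorphism from a comparison of the two minimal resolutions $X_{48}\to A/G_{48}$ and $Z\to\PP(1,3,8)$, using the fact that both carry the same configuration of negative curves. First I would record that, by the proposition in Section~\ref{subsec:The-images-ofmirr} and by Lemma~\ref{lem:numP138}, both $X_{48}$ and $Z$ are smooth rational surfaces with $K^2=5$, $c_2=7$, $p_g=q=0$, and that the exceptional locus over the two singularities $A_2+\frac{1}{8}(1,3)$ consists in each case of two $(-3)$-curves $C_1,C_2$ with $C_1C_2=1$ (over $\frac{1}{8}(1,3)$) and two $(-2)$-curves $C_3,C_4$ with $C_3C_4=1$ (over $A_2$). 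Moreover, Proposition~\ref{prop:The-curve-L23 is a (-1)} exhibits in $Z$ the $(-1)$-curve $\bar L_{23}$ with $\bar L_{23}C_1=\bar L_{23}C_3=1$ and $\bar L_{23}C_2=\bar L_{23}C_4=0$, while Proposition~\ref{M48etC0} exhibits in $X_{48}$ the strict transform $C_0$ of the image of the Bolza curve, a $(-1)$-curve with the very same incidences $C_0C_1=C_0C_3=1$, $C_0C_2=C_0C_4=0$. Thus $X_{48}$ and $Z$ contain identical abstract configurations $\{C_0,C_1,C_2,C_3,C_4\}$.

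Next I would run on both surfaces the contraction of Section~\ref{subsub: from p138 to hiz}. Contracting $C_0$ (resp.\ $\bar L_{23}$) makes $C_1$ a $(-2)$-curve and $C_3$ a $(-1)$-curve; contracting the image of $C_3$ makes $C_1$ and $C_4$ into $(-1)$-curves meeting each other; contracting $C_4$ then leaves a $(-3)$-curve $B$ (the image of $C_2$) and a $0$-curve $f$ (the image of $C_1$) with $Bf=1$, that is, the negative section and a fiber of $\FF_3$. Since this sequence depends only on the intersection numbers, which agree for $X_{48}$ and $Z$, both surfaces are recovered by reversing it, i.e.\ by blowing up $\FF_3$ at: a point $P_1\in f\setminus B$; then the point $P_2=C_1\cap C_4$ (the strict transform of $f$ meeting the exceptional curve of $P_1$); then $P_3=C_1\cap C_3$. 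The only genuine freedom here is the position of $P_1$ on $f\setminus B$, the centers $P_2,P_3$ being forced by the configuration.

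I would then conclude with the two-orbit property of $\aut(\FF_3)$ recalled in Section~\ref{subsub: from p138 to hiz}: since $\aut(\FF_3)$ acts transitively on the complement $\FF_3\setminus B$ of the negative section (identifying the two copies of $\FF_3$ via the unique $(-3)$-curve), any two admissible choices of $P_1$ are exchanged by some $\phi\in\aut(\FF_3)$, which then carries the forced centers $P_2,P_3$ of one configuration onto those of the other. Hence the two blow-ups are isomorphic, and we obtain an isomorphism $X_{48}\cong Z$ matching $C_i$ to $C_i$ for $i=1,\dots,4$ (and $C_0$ to $\bar L_{23}$). Contracting $C_1,C_2,C_3,C_4$ on both sides, this isomorphism descends to the desired isomorphism $A/G_{48}\cong\PP(1,3,8)$.

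The hard part will be the final uniqueness step: one must check that, once $P_1$ is fixed, the remaining centers $P_2,P_3$ are completely determined by the required configuration (so that no extra moduli sneak in), and that the single honest parameter, the point $P_1$ lying off the negative section, is indeed absorbed by $\aut(\FF_3)$. I expect everything else to be bookkeeping of intersection numbers. It is worth emphasizing that the construction of the $(-1)$-curve $C_0$ out of the Bolza curve in Section~\ref{sec:section4} is exactly what makes this comparison possible: it supplies in $X_{48}$ the analogue of the line $\bar L_{23}$, without which $X_{48}$ could not be pinned down among rational surfaces sharing the same singularities and numerical invariants.
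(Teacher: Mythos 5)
Your proposal is correct and follows essentially the same route as the paper: exhibit on $X_{48}$ a $(-1)$-curve $C_0$ (coming from the Bolza curve) with the same incidences with the exceptional curves as $\bar L_{23}$ has on $Z$, contract down to $\FF_3$, and invoke the two-orbit property of $\aut(\FF_3)$ to see that reversing the blow-up process is unique. The only slip is a citation: Proposition \ref{M48etC0} does not itself record the incidences $C_0C_1=C_0C_3=1$, $C_0C_2=C_0C_4=0$; in the paper these are obtained by rerunning the adjunction argument of Proposition \ref{prop:The-curve-L23 is a (-1)} at the two singular points, exactly the bookkeeping you anticipate.
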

\begin{proof}
Let us denote the resolution map by $p:X_{48}\to A/G_{48}$. Let
$C_{1},C_{2}$ be the resolution curves of the singularity $\frac{1}{8}(1,3)$,
and $C_{3},C_{4}$ be the resolution of $A_{2}$. Let $a\in A$ be an isolated
fixed point of an automorphism $\tau$ of order $3$ or $8$. The tangent
space $T_{\t,a}\subset T_{A,a}$ is stable by the action of $\tau$.
Since the local setup is the same, we can reason as in Proposition \ref{prop:The-curve-L23 is a (-1)}
and we obtain that the curve $C_{0}$ is such that 
\[
C_{0}C_{1}=C_{0}C_{3}=1,\,\,C_{0}C_{2}=C_{0}C_{4}=0.
\]
Contracting the curves $C_{0},C_{1},C_{2}$, one gets a rational surface
with a $(-3)$-curve and with invariants $K^{2}=2c_{2}=8$. This is
therefore the Hirzebruch surface $\FF_{3}$. From section \ref{sec:The-weighted-projective},
we know that reversing the contraction process one gets the weighted
projective plane $\PP(1,3,8)$ (contracting the curves $C_{0},C_{1},C_{3}$, one would have obtained the Hirzebruch surface $\FF_{2}$).
\end{proof}

\begin{rem}\label{rem:identifBaseNS}
Now we identify $\PP(1,3,8)$ with $A/{G_{48}}$ and we use the
notation in section \ref{sec:The-weighted-projective}. In particular
$Z=X_{48}$ is the minimal resolution of $\PP(1,3,8)$, the curves
$C_{1},\dots,C_{4}$ are exceptional divisors of the resolution map $Z\to \PP(1,3,8)$ and $C_{0}=\bar{L}_{23}$
is a $(-1)$-curve in $Z$.

Let us observe that the divisor $\tilde{F}=C_{1}+3C_{0}+2C_{3}+C_{4}$ satisfies 
\[
\tilde{F}C_{1}=\tilde{F}C_{0}=\tilde{F}C_{3}=\tilde{F}C_{4}=0,
\]
thus $\tilde{F}^{2}=0$, moreover $\tilde{F}C_{2}=\bar{L}_{13}\tilde{F}=1$,
$\tilde{F}\bar{L}_{13}=0$ and $\bar{L}_{13}^{2}=0$. This implies
that the curves $\tilde{F}$ and $\bar{L}_{13}$ are fibers of the
same fibration onto $\PP^{1}$ and $C_{2}$ is a section of that fibration.

The curves $C_0,\dots,C_4$  are exceptional divisors or strict transform of generators of the 
N\'eron-Severi group of a minimal rational surface. 
Thus the N\'eron-Severi group of the rational surface $X_{48}$ is generated by these curves. 
Knowing the intersection of curves $\bar L _{12},\,\bar L _{13},\,\tilde M_{48}$ with 
these curves (see Propositions \ref{prop:The-curve-L23 is a (-1)} and \ref{M48etC0}) 
 it is easy to obtain their classes in the N\'eron-Severi group,  in particular one gets that
  $\bar L _{12} \tilde M_{48}=8,\, \bar L _{13} \tilde M_{48}=3.$
\end{rem} 

\begin{figure}[h]

\caption{\label{fig:The-Yoga:-image} Configuration of curves $\tilde M_{48}$ , $\bar L _{12}$, $\bar L _{13}$ etc... in $X_{48}$ and their intersection numbers}
\includegraphics[scale=0.30]{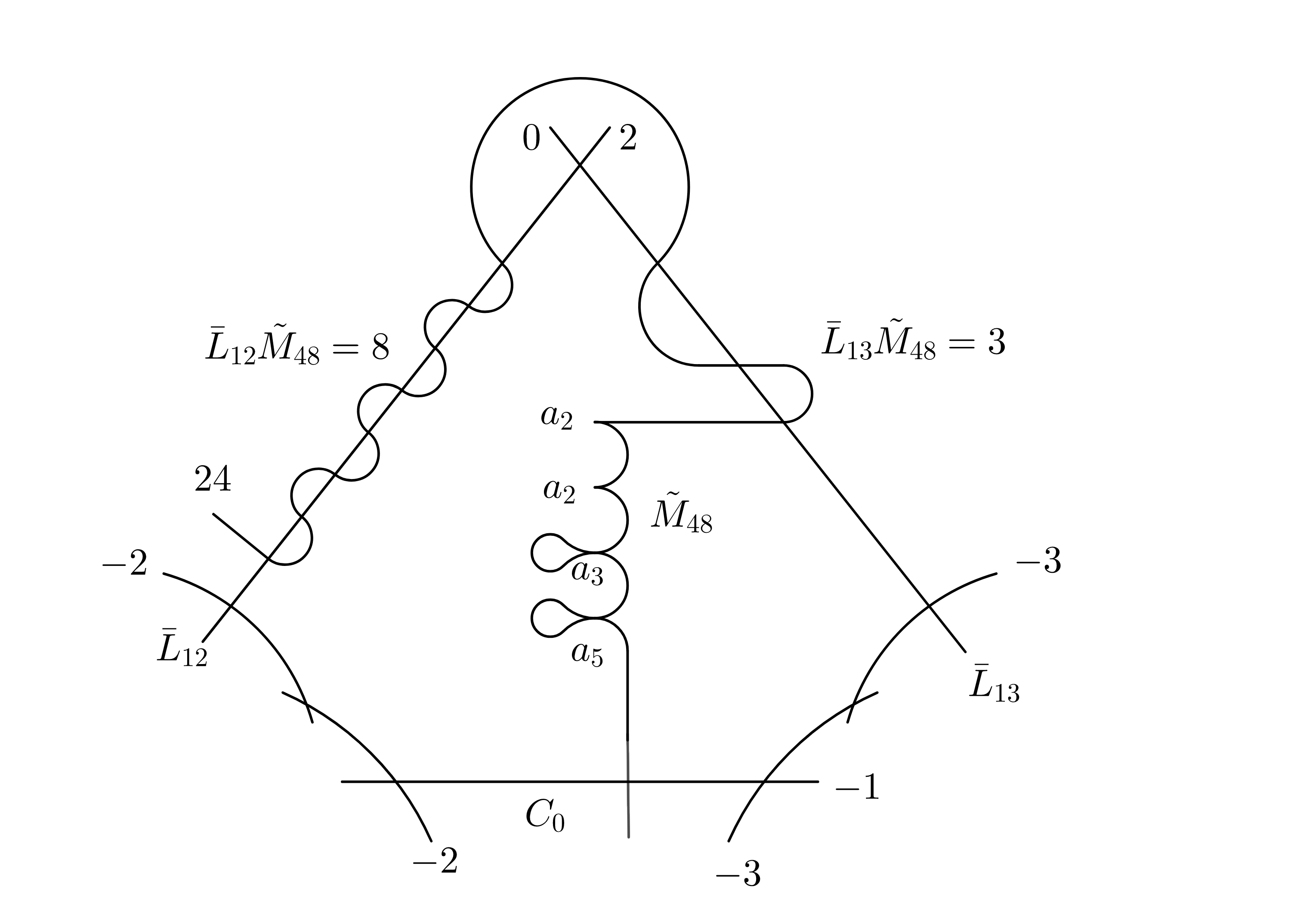}

\end{figure}

\section{A model of the mirror}

\subsection{A birational map from $\protect\PP(1,3,8)$ to $\protect\PP^{1}\times\protect\PP^{1}$ 
; images of the mirror}

\subsubsection{A rational map $\protect\PP(1,3,8)\dashrightarrow \protect\PP^{1}\times\protect\PP^{1}$.\label{subsec:A-rational-map}}

As above, we identify $\PP(1,3,8)$ with $A/{G_{48}}$; we use the
notation of sections \ref{sec:The-weighted-projective} and \ref{sec:section4}.

Take a point $p$ in the Hirzebruch surface $\mathbb F_n$ that is not in the negative section.
By blowing-up at $p,$ and then by  blowing-down  the strict transform of the fiber through $p,$
we get the Hirzebruch surface $\mathbb F_{n-1}.$ This process is called an {\em elementary transformation}.

Recall from sections \ref{sec:The-weighted-projective} and \ref{sec:section4} that there is a map $\psi:\mathbb P(1,3,8)\dashrightarrow\mathbb F_3$ that contracts
the curves $C_0,C_3,C_4$ to a smooth point.


Performing any sequence of three elementary transformations as above, we get a map $\rho:\mathbb F_3\dashrightarrow\mathbb F_0=\mathbb P^1\times\mathbb P^1$. This can be seen as a birational transform that, by blowing-up three times at a point $q$ not contained in the negative section, takes the fibre $F_q$ through $q$ to a chain of curves with self intersections $(-1),(-2),(-2),(-1),$ then followed by the contraction of the $(-1),(-2),(-2)$ chain (which contains the strict transform of $F_q$). For our purpose, we choose the three points to blow-up in a specific way, see subsection~\ref{subsec:blabla}.

Consider $$\phi:=\rho\circ\psi:\mathbb P(1,3,8)\dashrightarrow\mathbb P^1\times\mathbb P^1.$$

We observe that given any two points $t,t'\in\mathbb P^1\times\mathbb P^1$ not in a common fiber,
the map $\phi$ can be chosen such that the inverse $\phi^{-1}$ is not defined at $t,t'$
and $\phi^{-1}(\mathbb P^1\times\mathbb P^1)=\mathbb P(1,3,8).$

\begin{figure}[h]
\caption{From $X_{48}$ to $\protect\PP^{1}\times\protect\PP^{1}$ \label{fig:yoga}and
back}

\includegraphics[scale=0.25]{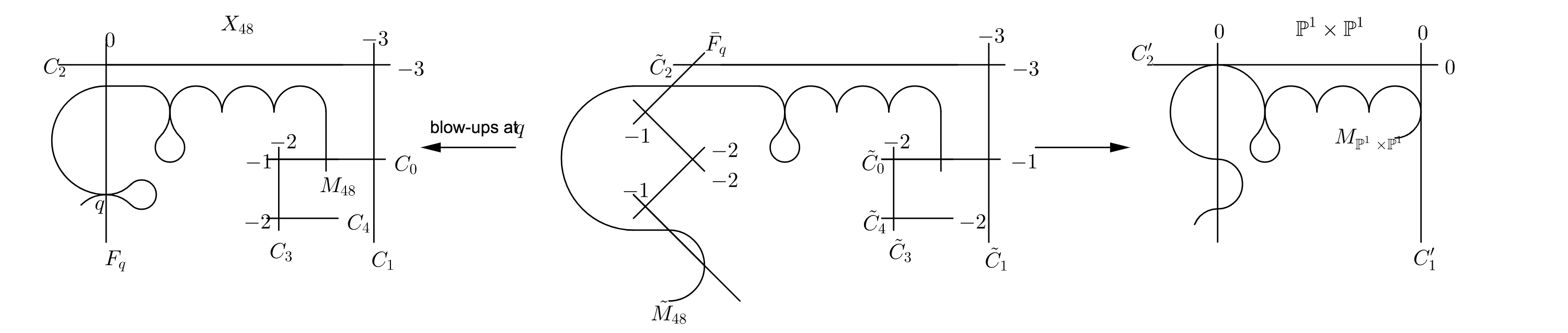}
\end{figure}

\subsubsection{Image of the mirror $M_{48}$ in $\protect\PP^{1}\times\protect\PP^{1}$.} \label{subsec:blabla}

Let us describe how to choose $\phi$ such that the image $M_{\mathbb P^1\times\mathbb P^1}$
of the mirror curve $M_{48}$ is a $(3,3)$-curve with singularities $\mathfrak a_3 + 2\mathfrak a_2$
and two special fibers tangent to it with multiplicity $3.$

The map $\mathbb P(1,3,8)\dashrightarrow\mathbb F_3$ factors through a morphism
$\varphi:X_{48}\rightarrow\mathbb F_3.$
Consider the point $t_0:=\varphi(C_0).$
Since $M_{48}C_0=1,$ then $\varphi(M_{48})$ is a curve which is smooth at
$t_0$ and its intersection number with the curve $\varphi(C_1)$ at $t_0$ is $3$.
The curve $C_1':=\rho\circ\varphi(C_1)$ is a fiber of $\mathbb P^1\times\mathbb P^1.$

Then we choose $q$ to be the $\mathfrak{a}_{5}$-singularity
of $M_{48}$. The fiber $F_{q}$ through $q$ cuts $M_{48}$ at $q$
with multiplicity $2$ or $3$. Suppose that the multiplicity is $3$. Then by taking the blow-up at that point and computing the strict transform of the curves $F_q$ 
and $M_{48}$, one can check that $F_q M_{48}\geq 4$. But $F_q M_{48}=\bar L _{13} M_{48}=3$ by Remark~\ref{rem:identifBaseNS}.
Therefore the fiber $F_{q}$ through $q$ cuts $M_{48}$ at $q$
with multiplicity $2$, and at another point. 
\begin{rem} \label{rem:singa3}
An analogous reasoning gives that the fiber through
 the $\mathfrak{a}_{3}$-singularity has the same property: it is transverse
 to the tangent of the $\mathfrak{a}_{3}$-singularity.
 \end{rem}
The three successive
blow-ups above $q$ are chosen such that they resolve the singularity
$\mathfrak{a}_{5}$.
The three blow-downs we described create a multiplicity $3$ tangent
point between $M_{\PP^{1}\times\PP^{1}}$ (the image of $M_{48}$ in $\PP^{1}\times\PP^{1}$) and the curve 
$C_{2}'$ (the image of $C_2$), thus $C_2 'M_{\PP^{1}\times\PP^{1}}=3$. Moreover $C_2'^2=0,C_1'C_2'=1$ (see figure \ref{fig:yoga}).

The mirror $M_{48}$ does not cut the curves $C_{1}$
and $C_{2}$. The transforms of these curves in $\PP^{1}\times\PP^{1}$
are fibers $C_{1}',C_{2}'$ such that $C_{i}'$ cuts $M_{\PP^1 \times \PP^1}$ at one point only, with multiplicity $3$.  
In particular, the class of $M_{\PP^{1}\times\PP^{1}}$ in the N\'eron-Severi
group of $\PP^{1}\times\PP^{1}$ is $3C_{1}'+3C_{2}'$. 
The singularities of $M_{\PP^{1}\times\PP^{1}}$ are $\mathfrak{a}_{3}+2\mathfrak{a}_{2}$.

\subsubsection{From  $\PP^{1}\times\PP^{1}$ to $\PP^{2}$ and back} \label{subsec:blabla2}
Let us recall that the blowup of $\mathbb P^1\times\mathbb P^1$ at a point,
followed by the blow-down of the strict transform of the two fibers through that point,
gives a birational map $\mathbb P^1\times\mathbb P^1\dashrightarrow\mathbb P^2.$

We choose to blow-up the point at the $\mathfrak{a}_{3}$-singularity
$s_{0}$, so that the strict transform of $M_{\PP^{1}\times\PP^{1}}$
has a node above $s_{0}$. The two fibers $F_{1},F_{2}$ of $\PP^{1}\times\PP^{1}$ passing through $s_{0}$
cut $M_{\PP^{1}\times\PP^{1}}$ in two other points respectively $s_{1},s_{2}$ (see Remark \ref{rem:singa3}; the result is preserved through the birational process).
The fibers $F_{1},F_{2}$ are contracted into points in $\PP^{2}$
by the rational map $\PP^{1}\times\PP^{1}\dashrightarrow\PP^{2}$,
the images of $s_{1},s_{2}$ by that map are on the image of the exceptional
divisor, which is a line $L_0$ through the node.  
This implies that the strict transform of $M_{\PP^1\times \PP^1}$
 is a plane quartic curve $M_{\mathbb P^2}$.
The process in illustrated in Figure \ref{fig:ImageP1P1toP2}. 

\begin{figure}[h]
\caption{ \label{fig:ImageP1P1toP2} From $\PP^1\times \PP^1$ to $\PP^2$}
\includegraphics[scale=0.3]{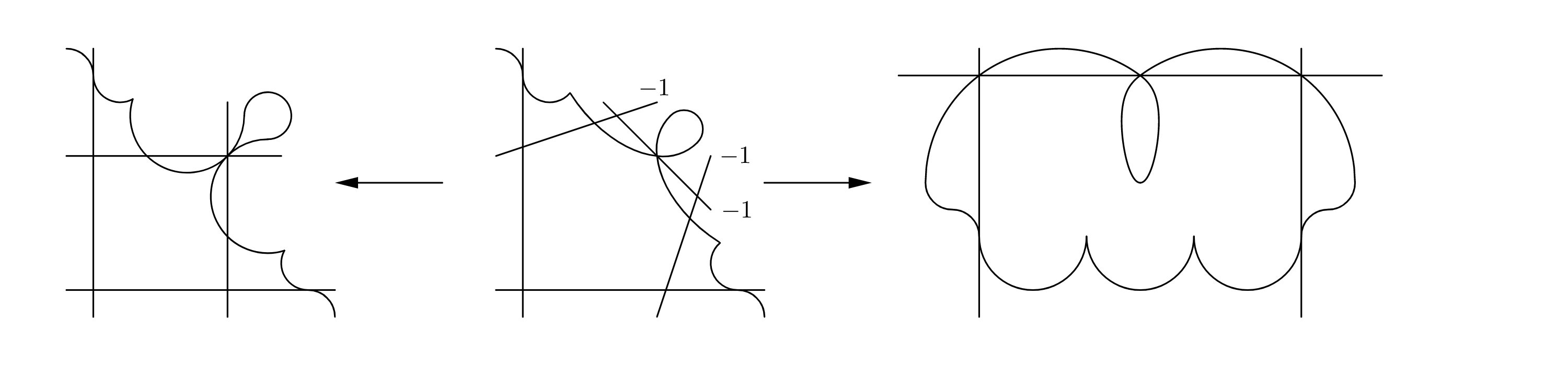}
\end{figure}

The total transform of $M_{\PP^1 \times \PP^1}$ in $\PP^{2}$ is
the union of $2L_{0}$ with $M_{\PP^2}$.  This quartic $M_{\PP^2}$ has the following 
properties which follow from its description  and the choice of the transformation 
from $\PP^{1}\times\PP^{1}$ to $\PP^{2}$:

\begin{prop} \label{prop:SingMP2}
The singular set of the quartic curve $M_{\mathbb P^2}$ is $\mathfrak a_1 + 2\mathfrak a_2,$
and the nodal point is contained in the line $L_0$.
The curve $M_{\mathbb P^2}$ contains two flex points such that each corresponding tangent line
meets the quartic at a second point that is contained in the line $L_0.$
\end{prop}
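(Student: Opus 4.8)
The plan is to follow the curve $M_{\PP^1\times\PP^1}$ through the two elementary steps of subsection~\ref{subsec:blabla2}, exploiting that the birational map $\PP^1\times\PP^1\dashrightarrow\PP^2$ is an isomorphism away from the point $s_0$ and the two fibers $F_1,F_2$ that are contracted. I would do all the bookkeeping on the common resolution: let $\beta\colon Y\to\PP^1\times\PP^1$ be the blow-up of $s_0$, with exceptional curve $E$; then $Y$ is at the same time the blow-up $Y\to\PP^2$ of the two points $q_1,q_2$ (the images of $F_1,F_2$) with exceptional curves $e_1,e_2$, and $E$ is the strict transform of the line $L_0=\overline{q_1q_2}$. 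In $\Pic(Y)$ this yields the dictionary $E=H-e_1-e_2$, $\beta^{*}(1,0)=H-e_2$ and $\beta^{*}(0,1)=H-e_1$, where $H$ is the pull-back of a line; in particular the strict transform of the mirror is $\tilde M=\beta^{*}(3,3)-2E=4H-e_1-e_2$, already recording that $M_{\PP^2}$ is a quartic passing through $q_1,q_2$ with multiplicity one.

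For the singular set I would argue locally. Since the map is an isomorphism outside $\{s_0\}\cup F_1\cup F_2$, the two cusps $\mathfrak a_2$ of $M_{\PP^1\times\PP^1}$ survive unchanged on $M_{\PP^2}$. At $s_0$, where $M_{\PP^1\times\PP^1}$ has a tacnode $\mathfrak a_3$ (two smooth branches with contact order two), the single blow-up $\beta$ separates the branches: in the chart $y=tx$ the equation $y^2-x^4$ becomes $x^2(t^2-x^2)$, so the strict transform acquires an ordinary node $\mathfrak a_1$ lying on $E$, hence on $L_0$. To see that the singular set is exactly $\mathfrak a_1+2\mathfrak a_2$ I would use the genus: $M_{\PP^2}$ is rational, being birational to the rational curve $M_{\PP^1\times\PP^1}$, so the arithmetic genus $p_a=3$ of a quartic forces a total $\delta$-invariant equal to $3$; as $\delta(\mathfrak a_1)+2\,\delta(\mathfrak a_2)=1+1+1=3$ already exhausts this, no further singularity can occur. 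The smoothness of $M_{\PP^2}$ at $q_1,q_2$ is visible from $\tilde M=4H-e_1-e_2$; more precisely, Remark~\ref{rem:singa3} gives that each fiber $F_i$ meets $M_{\PP^1\times\PP^1}$ at $s_0$ with multiplicity exactly $2$, so that after $\beta$ the strict transform $\tilde M$ meets $\tilde F_i$ in a single point away from $E$, and the contraction of $\tilde F_i$ creates nothing new.

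For the two flexes I would use the fibers $C_1',C_2'$ which, by subsection~\ref{subsec:blabla}, are tangent to $M_{\PP^1\times\PP^1}$ with multiplicity $3$ at smooth points and avoid $s_0$. Each $C_i'$ lies in one ruling, is disjoint from the parallel fiber among $F_1,F_2$, and meets the other one once; contracting that other fiber sends the image of $C_i'$ through the corresponding point $q_j$. The class computation confirms this: $\beta^{*}(1,0)=H-e_2$ (and symmetrically $\beta^{*}(0,1)=H-e_1$) shows that the image of $C_i'$ is a line $\ell_i$ through $q_j\in L_0$. Near the tangency point, which is a smooth point of $M_{\PP^1\times\PP^1}$ outside the modified locus, the map is a local isomorphism, so the order-$3$ contact is preserved and $\ell_i$ is a genuine flex line of $M_{\PP^2}$, whose flex point lies off $L_0$. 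Finally the residual intersection is pinned down by $\tilde{C}'_i\cdot\tilde M=(H-e_2)\cdot(4H-e_1-e_2)=3$, which is entirely concentrated at the flex on $Y$, whereas in the plane $\ell_i\cdot M_{\PP^2}=4$; the discrepancy is exactly the common passage of $\ell_i$ and $M_{\PP^2}$ through $q_j$, so the second point of the flex line on the quartic is $q_j\in L_0$, as claimed.

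The main obstacle is not any single computation but the careful bookkeeping of which curves are contracted and which are not, together with the verification that the configuration is generic enough for the local pictures to be valid: one must check that the two tangency points of $C_1',C_2'$ are smooth points of $M_{\PP^1\times\PP^1}$ lying outside $\{s_0\}\cup F_1\cup F_2$ (so that the multiplicity-$3$ contact really descends to a flex and the flex points are distinct from the node and from $q_1,q_2$), and that the fibers through $s_0$ are transverse to the tacnode tangent, so that $\beta$ produces an ordinary node rather than a worse singularity. Once these transversality statements are secured — they follow from the intersection number $\bar{L}_{13}\tilde{M}_{48}=3$ of Remark~\ref{rem:identifBaseNS} and from Remark~\ref{rem:singa3} — all three assertions of the proposition drop out of the intersection theory on $Y$.
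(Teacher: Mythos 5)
Your proposal is correct and takes essentially the same approach as the paper: Proposition~\ref{prop:SingMP2} is justified there only by the description of the birational map in subsections~\ref{subsec:blabla} and~\ref{subsec:blabla2} (blow up the $\mathfrak a_3$-point $s_0$, contract the two fibers $F_1,F_2$, use Remark~\ref{rem:singa3} and the order-$3$ tangency of $C_1',C_2'$), which is exactly the route you follow. Your additional bookkeeping --- the Picard classes on the common resolution, the local equation $y^2-x^4$ under blow-up, and the $\delta$-invariant count excluding further singularities --- simply makes explicit what the paper states ``follows from its description and the choice of the transformation''.
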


\subsection{The yoga between the mirrors $M_{\PP^2}$ and $M_{48}$}
Using the previous description the reader can follow the transformations between the surfaces $\PP(1,3,8)$ and the plane. The link between Deraux's ball quotient orbifolds described in \cite[Theorem 5]{Deraux}
 and the quartic $M_{\PP^2}$ is as follows:
 
 The singularities $\mathfrak{a}_1+2\mathfrak{a}_2$ of $M_{\PP^2}$ correspond respectively
to singularities $\mathfrak{a}_3+2\mathfrak{a}_2$ of $M_{48}$, so that in order to get 
the curves $F,G,H$ in \cite[Figure 1]{Deraux} one has to blow-up and contract at these $3$
points as it is done in \cite{Deraux}. 
In order to obtain the curve $E$  in \cite[Figure 1]{Deraux}, one has to blow-up the two flexes 
three times in order to separate $M_{\PP^2}$ and the flex lines. One obtain two chains of $(-1),(-2),(-2)$ curves.
Contracting one of the two $(-2),(-2)$ chains one gets an $A_2$-singularity. 
The curve $E$ is the image by the contraction map of the remaining $(-1)$-curve of the chain.
The resolution of the singularity $A_2$ on $\PP(1,3,8)$  corresponds to the two $(-2)$-curves 
on the other chain of $(-1),(-2),(-2)$ curves. After taking the blow-up at the residual 
intersection of the quartic and the flex lines and after separating the flex lines and the mirror
$M_{\PP^2}$, one gets two $(-3)$-curves intersecting transversally at one point.
In that way the resolution of the singularity $\frac{1}{8}(1,3)$ on $\PP(1,3,8)$ 
by two $(-3)$-curves corresponds to the two flex lines.

\subsection{A particular quartic curve in $\protect\PP^{2}$}

The aim of this sub-section is to prove the following result:
\begin{thm}
\label{thm:unicity of quartic} Up to projective equivalence, there
is a unique quartic curve $Q$ in $\mathbb{P}^{2}$ with distinct
points $p_{1},\ldots,p_{7}$ such that: 

\begin{enumerate} \item $Q$ has a node at $p_{1}$ and ordinary
cusps at $p_{2},\,p_{3}$; 

\item the points $p_{4},\,p_{5}$ are flex points of $Q$; 

\item the tangent lines to $Q$ at $p_{4},\,p_{5}$ contain $p_{6},\,p_{7},$
respectively; 

\item the line through $p_{6},\,p_{7}$ contains $p_{1}.$ 

\end{enumerate}

We can assume that 
\[
p_{1}=[0:0:1],\ p_{2}=[0:1:1],\ p_{3}=[1:0:1].
\]
Then the equation of $Q$ is 
\[
(x^{2}+xy+y^{2}-xz-yz)^{2}-8xy(x+y-z)^{2}=0,
\]
and the points $p_{4},p_5$ and $p_6,p_{7}$ are, respectively, 
\[
\left[\pm2\sqrt{-2}+8:\mp2\sqrt{-2}+8:25\right],\  \left[\pm2\sqrt{-2}:\mp2\sqrt{-2}:1\right].
\]
\end{thm}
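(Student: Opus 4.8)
The plan is to rigidify the configuration by projective transformations, reduce the singular quartics to an explicit low–parameter family, impose the flex and collinearity conditions, and solve the resulting polynomial system (the last step being where I would invoke the announced Magma computation).

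First I would justify the normalization. The three singular points are intrinsic and non-collinear: if the node $p_1$ and the cusps $p_2,p_3$ lay on a line, that line would meet $Q$ with multiplicity at least $2+2+2=6>4$, contradicting B\'ezout for an irreducible quartic. Since the node is analytically distinct from the cusps, an element of $PGL_3(\CC)$ sends $p_1\mapsto[0:0:1]$ and $\{p_2,p_3\}\mapsto\{[0:1:1],[1:0:1]\}$, the two possibilities being exchanged by the involution $\iota\colon[x:y:z]\mapsto[y:x:z]$. After this normalization the only residual freedom is the two–dimensional torus $T$ stabilizing the three reference points together with $\iota$.

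Next I would parametrize the node–and–two–cusps quartics. The three sides of the triangle $p_1p_2p_3$ are $x=0$ (through $p_1,p_2$), $y=0$ (through $p_1,p_3$) and $L:=x+y-z=0$ (through $p_2,p_3$). By B\'ezout each side meets $Q$ only at the two singular points it contains, with multiplicity exactly $2$ at each (more would force the side to be a branch tangent at the node or the cuspidal tangent, hence total multiplicity $>4$), so the restriction of $Q$ to every side is a perfect square. Writing a conic $C=\alpha\,x(x-z)+\beta\,y(y-z)+\delta\,xy$ through $p_1,p_2,p_3$ and matching square roots side by side, one gets that $Q-C^2$ vanishes on all three sides, hence $Q=C^2+xyL\cdot\ell$ for a linear form $\ell$; imposing that the double tangent cones at $p_2,p_3$ come from genuine cusps forces $\ell\propto L$, leading to the ansatz $Q=C^2-\mu\,xyL^2$. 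This family and the family of node–and–two–cusps quartics are both three–dimensional with the former contained in the latter, and I would check they coincide so that no quartic is lost; the problem is thereby reduced to the parameters $(\alpha,\beta,\delta,\mu)$, on which $T$ and overall scaling act.

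Finally I would impose the flex data. For a flex $p_4=[a:b:c]$ the conditions are $Q(p_4)=0$ and $p_4$ on the Hessian of $Q$; the residual point $p_6$ is the fourth intersection of the flex tangent $\nabla Q(p_4)\cdot(x,y,z)=0$ with $Q$, a rational function of $(a,b,c)$ and the parameters, and similarly for $p_5,p_7$. The requirement is that $p_1,p_6,p_7$ be collinear. Exploiting $\iota$, I would look for symmetric solutions with $p_5=\iota(p_4)$ and $p_7=\iota(p_6)$, so that $L_0=\overline{p_6p_7}$ is $\iota$–invariant and passes through $p_1=[0:0:1]$, which leaves only $L_0=\{x=y\}$ or $L_0=\{x+y=0\}$; imposing $p_6\in L_0$ pins down the parameters. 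Solving the full system on the computer confirms that its solution set is a single orbit under $T$ and $\iota$ — the asserted uniqueness — and returns the displayed equation $(x^2+xy+y^2-xz-yz)^2-8xy(x+y-z)^2=0$; a direct substitution then verifies the singularity type $\mathfrak{a}_1+2\mathfrak{a}_2$ and computes the flexes $p_4,p_5$ and residual points $p_6,p_7$.

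The main obstacle is this last step: arranging the flex–plus–collinearity equations so that no solution is missed, and above all proving that the finite solution set is a single projective–equivalence class rather than merely exhibiting one solution. The symmetry reduction makes the $\iota$–symmetric branch elementary, but excluding (or absorbing) possible non–symmetric configurations, and certifying that the cusps are exactly of type $\mathfrak{a}_2$ and the node of type $\mathfrak{a}_1$ (and not some worse $\mathfrak{a}_k$), is what genuinely requires the elimination computation.
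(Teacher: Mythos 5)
Your overall strategy --- normalize the three singular points, cut the admissible quartics down to an explicit low-parameter family, then impose the flex and collinearity conditions and eliminate by computer --- is the strategy of the paper, but one step of your reduction is wrong and the key normalization that makes the elimination tractable is missing. The incorrect step is the claim that the cusp conditions at $p_{2},p_{3}$ force $\ell\propto L$ in the decomposition $Q=C^{2}+xyL\,\ell$. Take local coordinates $u=x$, $v=y-z$ at $p_{2}$ and $C=\alpha x(x-z)+\beta y(y-z)+\delta xy$; since $xyL$ has multiplicity $2$ at $p_{2}$, the quadratic part of $Q$ there is
\[
\bigl((\delta-\alpha)u+\beta v\bigr)^{2}+t\,(u^{2}+uv),\qquad t=\ell(p_{2}),
\]
whose discriminant equals $t\bigl(t+4\beta(\delta-\alpha-\beta)\bigr)$. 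So besides $t=0$ there is a second branch $t=-4\beta(\delta-\alpha-\beta)$ on which the tangent cone is still a double line and $p_2$ is still a cusp, while $\ell$ is not proportional to $L$. What is true is that $Q$ lies in the pencil $\langle C^{2},\,xyL^{2}\rangle$ for the \emph{one} square-root-matching conic $C$ whose tangent lines at $p_{2},p_{3}$ coincide with the cuspidal tangent cones of $Q$; for the other sign choices your $\ell$ genuinely fails to be proportional to $L$, and the dimension count you invoke cannot repair this, since the locus of node-and-two-cusps quartics is cut out by quadratic (discriminant) conditions and equality of dimensions does not preclude extra components. As written, the completeness of your family $(\alpha,\beta,\delta,\mu)$ is not established.

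The paper avoids all of this by spending the residual freedom differently: the stabilizer of $p_{1},p_{2},p_{3}$, namely $[x:y:z]\mapsto[ax:by:(a-1)x+(b-1)y+z]$, acts transitively on the admissible tangent lines through $p_{2}$ and through $p_{3}$, so one may also fix the two cuspidal tangent cones to be $(y-z)^{2}$ and $(x-z)^{2}$. A node plus two cusps with \emph{prescribed} tangent cones are $13$ linear conditions on the $\PP^{14}$ of quartics, so the family is the single explicit pencil $(x^{2}+xy+y^{2}-xz-yz)^{2}+a\,xy(x+y-z)^{2}$, and the elimination has one curve parameter $a$ besides the unknowns for the flexes and residual points --- and even then it takes hours. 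Your version keeps three or four curve parameters modulo a torus action, and both the solution of the system and the identification of its solution set as a single $T\rtimes\langle\iota\rangle$-orbit are deferred to an unperformed and much heavier computation; the restriction to $\iota$-symmetric flex pairs only produces a candidate and, as you concede, cannot give uniqueness. Note finally that the paper encodes ``flex line'' not via the Hessian but via an explicit closed criterion (a binary quartic $ax^{4}+bx^{3}z+cx^{2}z^{2}+dxz^{3}+ez^{4}$ has a root of multiplicity $\ge 3$ iff $12ae-3bd+c^{2}=27ad^{2}+27b^{2}e-27bcd+8c^{3}=0$), and guarantees that no solution is missed by inverting the nondegeneracy polynomials with an auxiliary variable and rerunning the elimination over all choices of minors --- precisely the bookkeeping you correctly identify as the crux but do not supply.
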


\begin{cor}
The mirror $M_{\mathbb{P}^2}$  described on sub-section \ref{subsec:blabla2} 
satisfies the hypothesis of Theorem \ref{thm:unicity of quartic}, thus $M_{\mathbb{P}^2}$ is projectively equivalent to the quartic $Q$.
\end{cor}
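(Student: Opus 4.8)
The plan is to recognize the configuration of Theorem~\ref{thm:unicity of quartic} inside $M_{\PP^2}$ by reading it off from Proposition~\ref{prop:SingMP2} together with the explicit construction of subsection~\ref{subsec:blabla2}, and then to invoke the uniqueness clause of that theorem. First I would fix the labelling: let $p_1$ be the nodal point and $p_2,p_3$ the two cusps provided by the singular set $\mathfrak a_1+2\mathfrak a_2$ of $M_{\PP^2}$; let $p_4,p_5$ be the two flex points of Proposition~\ref{prop:SingMP2}; and let $p_6,p_7$ be the second (residual) intersection points of the corresponding flex tangents with $M_{\PP^2}$, which are well defined because each flex tangent already meets the quartic with multiplicity $3$ at its flex, leaving a single residual point.

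With this labelling, hypotheses (1)--(3) of Theorem~\ref{thm:unicity of quartic} are exactly the content of Proposition~\ref{prop:SingMP2}: an $\mathfrak a_1$ point is a node and an $\mathfrak a_2$ point is an ordinary cusp (from the local equations $y^2-x^2=0$ and $y^2-x^3=0$), so $M_{\PP^2}$ has precisely one node and two ordinary cusps; the two flex points are given; and $p_6,p_7$ lie on the respective flex tangents by their very definition. For hypothesis (4) I would use that Proposition~\ref{prop:SingMP2} places the nodal point $p_1$ and both residual points $p_6,p_7$ on a single line $L_0$: as soon as $p_6\neq p_7$, the line $L_0$ is the unique line through $p_6$ and $p_7$, and it passes through $p_1$, which is precisely condition (4).

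The only input not already recorded in Proposition~\ref{prop:SingMP2} is the pairwise distinctness of $p_1,\dots,p_7$ demanded by the theorem, and I expect this bookkeeping to be the main (though minor) obstacle. The three singular points $p_1,p_2,p_3$ are distinct because the singular locus is a disjoint union $\mathfrak a_1+2\mathfrak a_2$, and the flexes $p_4,p_5$ are smooth points, hence distinct from the singularities and, being two genuine flexes, from each other. That $p_6\neq p_7$ I would extract from the construction of subsection~\ref{subsec:blabla2}: the birational map $\PP^1\times\PP^1\dashrightarrow\PP^2$ contracts the two distinct fibres through $s_0$ to two distinct points of $L_0$, and these are exactly the images of $s_1,s_2$, namely $p_6,p_7$. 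Finally, $p_6$ and $p_7$ differ from $p_1,\dots,p_5$ by a B\'ezout count: a flex tangent meets the quartic with multiplicity $3$ at its flex, so passing in addition through the node (which would contribute multiplicity $2$), through a cusp, or through the remaining flex would force a total intersection number exceeding $4$.

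Once all four hypotheses are checked and the seven points are seen to be distinct, Theorem~\ref{thm:unicity of quartic} applies verbatim to $M_{\PP^2}$, and its assertion of uniqueness up to projective equivalence yields at once that $M_{\PP^2}$ is projectively equivalent to the explicit quartic $Q$, completing the proof of the corollary.
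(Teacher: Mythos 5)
Your overall strategy is exactly the paper's: the paper gives no separate proof of this corollary, treating it as immediate because Proposition \ref{prop:SingMP2} was formulated precisely to match hypotheses (1)--(4) of Theorem \ref{thm:unicity of quartic} (an $\mathfrak a_1$ is the node, the two $\mathfrak a_2$'s are the ordinary cusps, the flexes and their tangents are supplied by the proposition, and collinearity of $p_1,p_6,p_7$ is the statement that the node and the two residual points lie on $L_0$). Your decision to also verify the distinctness of $p_1,\dots,p_7$, which the theorem requires and the paper leaves implicit, is a legitimate refinement rather than a different route.

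However, one sub-step of that distinctness check is wrong. You claim that the residual point $p_6$ of the flex tangent at $p_4$ cannot coincide with the other flex $p_5$ because this ``would force a total intersection number exceeding $4$''. It would not: a line passing \emph{transversally} through a flex point picks up intersection multiplicity only $1$ there, so the flex tangent at $p_4$ would meet the quartic with multiplicity $3+1=4$, perfectly consistent with B\'ezout. (Your count is correct for the node and the cusps, which contribute multiplicity at least $2$, giving $3+2=5>4$.) The coincidence $p_6=p_5$ can nevertheless be excluded from the data of Proposition \ref{prop:SingMP2}: if $p_6=p_5$, then $p_5\in L_0$; since the residual point $p_7$ of the flex tangent $T_5$ at $p_5$ also lies on $L_0$ and $p_7\neq p_5$ (it is a ``second point''), the lines $T_5$ and $L_0$ share the two distinct points $p_5,p_7$, hence $T_5=L_0$; but $L_0$ contains the node $p_1$, and a flex tangent through the node meets the quartic with multiplicity at least $3+2=5>4$, a contradiction. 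Alternatively, as you already do for $p_6\neq p_7$, one can read the exclusion off the construction of subsection \ref{subsec:blabla2}: the flexes are images of the triple tangency points of the fibers $C_1',C_2'$, which lie off the two contracted fibers $F_1,F_2$, while $p_6,p_7$ are precisely the images of those contracted fibers, and the birational map is an isomorphism away from them. With this repair your argument is complete and matches the paper's intent.
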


\begin{figure}[h]
\caption{ \label{fig:yoga4} The quartic $Q$}
\includegraphics[scale=0.45]{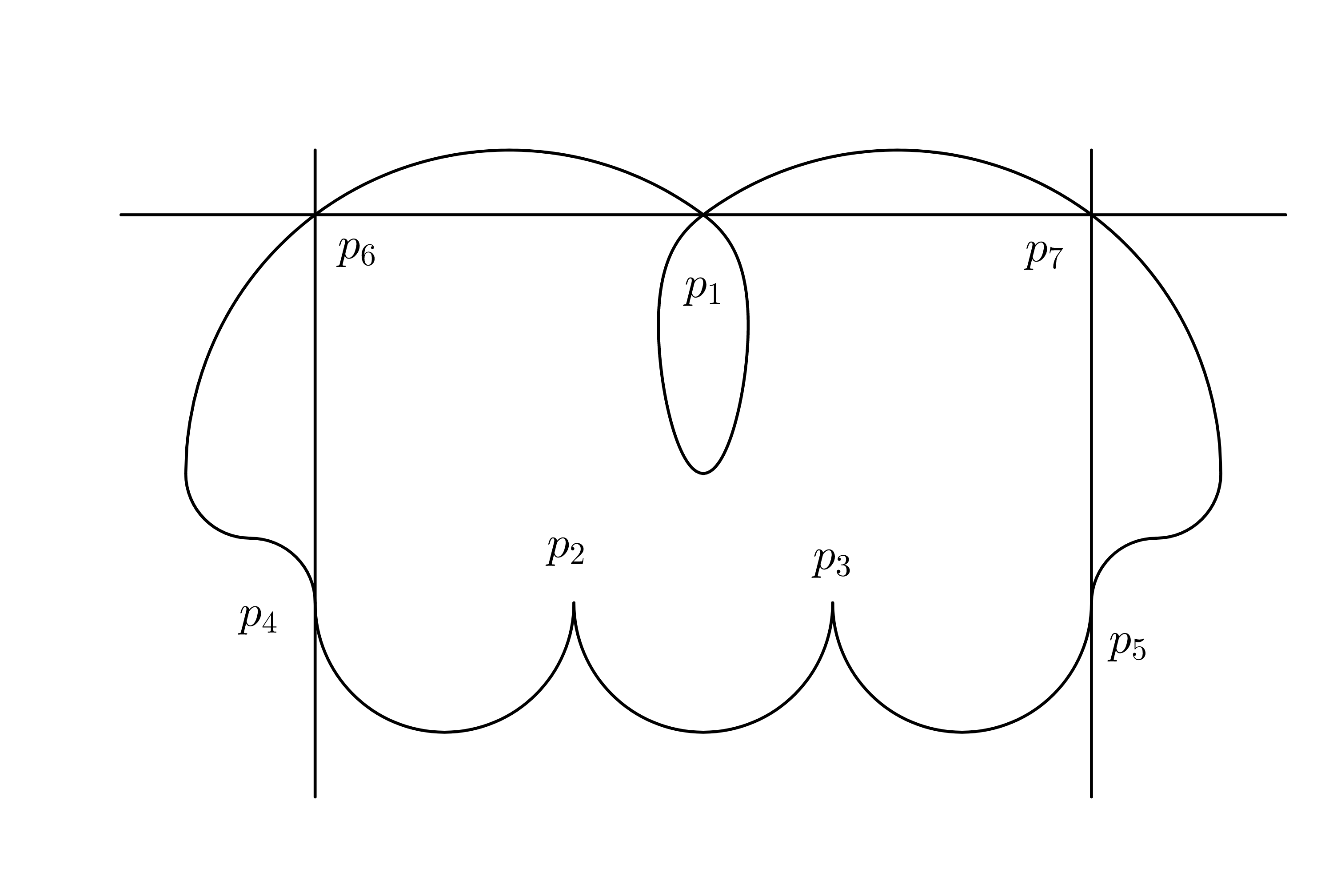}
\end{figure}

In order to prove \ref{thm:unicity of quartic}, let us first give a criterion for the existence of roots of multiplicity
at least $3$ on homogeneous quartic polynomials on two variables.
We use the computational algebra system Magma; see \cite{rito} for a copy-paste ready version of the Magma code.
\begin{lem}
\label{lem:criterion}The polynomial 
\[
P(x,z)=ax^{4}+bx^{3}z+cx^{2}z^{2}+dxz^{3}+ez^{4}
\]
has a root of multiplicity at least $3$ if and only if 
\[
12ae-3bd+c^{2}=27ad^{2}+27b^{2}e-27bcd+8c^{3}=0.
\]
 
\end{lem}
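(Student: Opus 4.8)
The plan is to characterize when the quartic $P(x,z)=ax^{4}+bx^{3}z+cx^{2}z^{2}+dxz^{3}+ez^{4}$ has a root of multiplicity at least $3$ by translating the geometric condition into the vanishing of two explicit polynomial invariants in the coefficients. A binary quartic has a root of multiplicity $\geq 3$ precisely when, over the algebraic closure, it factors as $(\alpha x - \beta z)^{3}(\gamma x - \delta z)$ (up to scalar), or degenerately as a fourth power. The natural strategy is to express this factorization condition via the classical invariant theory of binary quartics: a binary quartic has a triple root if and only if it is \emph{equianharmonic}, i.e. both of its fundamental invariants of the appropriate kind vanish. I expect the two stated polynomials $12ae-3bd+c^{2}$ and $27ad^{2}+27b^{2}e-27bcd+8c^{3}$ to be (scalar multiples of) the standard invariants $I$ and $J$ of the binary quartic.

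First I would recall the two basic $SL_{2}$-invariants of a binary quartic. Writing the quartic as above, the invariant of degree $2$ in the coefficients is $I = 12ae - 3bd + c^{2}$ (the one appearing in the statement, up to normalization), and the invariant of degree $3$ is $J$, whose classical formula is $J = 72ace - 27ad^{2} - 27b^{2}e + 9bcd - 2c^{3}$. The discriminant of the quartic equals $\frac{1}{27}(4I^{3} - J^{2})$ (up to a constant), and vanishes exactly when there is a repeated root. To pin down the triple-root locus I would use the standard fact that the roots of a binary quartic are \emph{equianharmonic} (the cross-ratio is a primitive sixth root of unity, which forces a degeneration to a triple root among real collision patterns) exactly when $J = 0$, while $I = 0$ characterizes the harmonic case. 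The precise classical result I need is: $P$ has a root of multiplicity $\geq 3$ if and only if $I = J = 0$.

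The cleanest way to carry this out, and the route taken here via Magma, is to verify the equivalence directly rather than quoting invariant theory. I would set up the generic factorization $P(x,z) = a(x - r z)^{3}(x - s z)$, expand, and read off $b,c,d,e$ as symmetric functions of $r,s$ times $a$; substituting these into the two candidate expressions and checking that both vanish identically establishes the forward implication (with the degenerate cases $a=0$ and the fourth-power case handled separately). For the converse, I would take $I = J' = 0$ (where $J'$ denotes the degree-$3$ expression as written in the lemma, namely $27ad^{2}+27b^{2}e-27bcd+8c^{3}$, which equals $-J$ up to using $I=0$ to simplify $72ace-9bcd$ terms) and show this system forces the existence of a triple root. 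This is most efficiently done by eliminating variables: compute a Gr\"obner basis of the ideal generated by $I$, $J'$, and the conditions that $P$ and its first and second derivatives share a common root, and check the two varieties coincide.

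The main obstacle will be the converse direction and the bookkeeping of degenerate cases. When $a \neq 0$ one can normalize and reduce to a depressed quartic, but one must take care that $I = J' = 0$ genuinely forces a triple root and not merely two double roots: the two-double-root locus also lies inside the discriminant, and one has to confirm that it is distinguished from the triple-root locus precisely by the extra invariant. The subtlety is that $I = J = 0$ is equivalent to a triple root (including the $(\alpha x - \beta z)^{4}$ degeneration), whereas the bi-double-root configuration $(\alpha x-\beta z)^{2}(\gamma x-\delta z)^{2}$ has $J = 0$ but $I \neq 0$ in general; verifying that the stated degree-$3$ expression together with $I$ really cuts out only the triple-root stratum is exactly the computation the Magma code performs, and the clean reconciliation of the sign/normalization conventions between $J'$ as written and the textbook $J$ is where I expect most of the care to be needed.
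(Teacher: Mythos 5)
Your proposal is correct and follows essentially the same route as the paper: the paper's proof is exactly the direct computation you describe, namely parametrizing the triple-root quartics as $(ux+vz)^{3}(mx+nz)$ and computing the elimination ideal in Magma, which returns the two stated polynomials. Your invariant-theoretic framing is also sound --- the two expressions generate the same ideal as the classical invariants $I$ and $J$ of the binary quartic, whose common vanishing is the triple-root locus --- though note that in the standard convention it is $I=0$ that characterizes equianharmonic root sets and $J=0$ the harmonic ones (the reverse of what you wrote), and a quartic with two distinct double roots generically has \emph{both} $I\neq 0$ and $J\neq 0$, with only $4I^{3}-J^{2}$ vanishing.
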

\begin{proof}
The computation below is self-explanatory. 
\begin{verbatim} 
R<u,v,m,n,a,b,c,d,e>:=PolynomialRing(Rationals(),9);
P<x,z>:=PolynomialRing(R,2); 
f:=(u*x+v*z)^3*(m*x+n*z); 
s:=Coefficients(f); 
I:=ideal<R|a-s[5],b-s[4],c-s[3],d-s[2],e-s[1]>; 
EliminationIdeal(I,4);
\end{verbatim}  \end{proof}

Let us now prove Theorem \ref{thm:unicity of quartic}:
\begin{proof}

We have already chosen $3$ points $p_1,p_2,p_3$ in $\PP^2$. 
Instead of choosing a fourth point for having a projective base,
one can fix two infinitely near points over $p_2$ and $p_3$.
Indeed the projective transformations that fix points $p_1,p_2,p_3$ are of the form
\[
\phi:[x:y:z]\longmapsto[ax:by:(a-1)x+(b-1)y+z]
\]
and these transformations act transitively on the lines through $p_2$ and $p_3$. 
 Thus  up to projective equivalence, we can fix the tangent cones 
(which are double lines) of the curve $Q$ at the cusps $p_{2},\,p_{3}.$
Let us choose for these cones the lines
with equations $y=z$ and $x=z$, respectively.

The linear system of quartic curves in $\PP^{2}$ is $14$ dimensional.
The imposition of a node and two ordinary cusps (with given tangent cones) corresponds to $13$
conditions, thus we get a pencil of curves. We compute that this pencil
 is generated by the following quartics:
\[
(x^{2}+xy+y^{2}-xz-yz)^{2}=0,\ \ xy(x+y-z)^{2}=0.
\]
Notice that, at the points $p_2,p_3,$ the first generator is of multiplicity $2$ 
and the second generator is of multiplicity $3$,
 thus a generic element in the pencil has a cusp singularity at $p_2,p_3.$

Let us compute the quartic curves $Q$ satisfying condition (1) to (4) of Theorem \ref{thm:unicity of quartic}. The method is to
define a scheme by imposing the vanishing of certain polynomials $P_{i}=0,$
and the non-vanishing of another ones $D_{i}\neq0,$ which is achieved
by using an auxiliary parameter $n$ and imposing $1+nD_{i}=0.$

\begin{verbatim} 
K:=Rationals();
R<a,q1,q2,m,d1,d2,n>:=PolynomialRing(K,7); 
P<x,y,z>:=ProjectiveSpace(R,2); 
\end{verbatim}
The defining polynomial of $Q,$ depending on one parameter: 

\begin{verbatim}
F:=(x^2 + x*y + y^2 - x*z - y*z)^2 + a*x*y*(x + y - z)^2; 
\end{verbatim}
The points $p_{6},\,p_{7}$ are in a line $y=mx,$ hence they are
of the form 

\begin{verbatim} 
p6:=[q1,m*q1,1]; 
p7:=[q2,m*q2,1];
\end{verbatim} 
and we must have the vanishing of 

\begin{verbatim} 
P1:=Evaluate(F,[q1,m*q1,1]); 
P2:=Evaluate(F,[q2,m*q2,1]); 
\end{verbatim} 
The defining polynomials of lines through that points are:

\begin{verbatim} 
L1:=-y+d1*x+(m*q1-d1*q1)*z; 
L2:=-y+d2*x+(m*q2-d2*q2)*z; 
\end{verbatim} 
We need to impose that these lines are not tangent to $Q$ at $p_{6},\,p_{7},$
thus the following matrices must be of rank $2.$

\begin{verbatim} 
M1:=Matrix([JacobianSequence(F),JacobianSequence(L1)]); 
M1:=Evaluate(M1,[q1,m*q1,1]);
M2:=Matrix([JacobianSequence(F),JacobianSequence(L2)]);
M2:=Evaluate(M2,[q2,m*q2,1]); 
\end{verbatim} 
The matrix $M_{i}$ is of rank $2$ if one of its minors is non-zero.
Here we make a choice for these minors, but in order to cover all
cases the computations must be repeated for all other choices. 

\begin{verbatim} 
D1:=Minors(M1,2)[1]; 
D2:=Minors(M2,2)[1];
\end{verbatim} 
Now we intersect the quartic $Q$ with the lines $L_{1},\,L_{2}:$ 

\begin{verbatim} 
R1:=Evaluate(F,y,d1*x+(m*q1-d1*q1)*z); 
R2:=Evaluate(F,y,d2*x+(m*q2-d2*q2)*z); 
\end{verbatim} 
and we use Lemma \ref{lem:criterion} to impose that these lines are
tangent to $Q$ at flex points of $Q$: 

\begin{verbatim} 
c:=Coefficients(R1); 
P3:=c[1]*c[5]-1/4*c[2]*c[4]+1/12*c[3]^2; 
P4:=c[1]*c[4]^2+c[2]^2*c[5]-c[2]*c[3]*c[4]+8/27*c[3]^3;
c:=Coefficients(R2);
P5:=c[1]*c[5]-1/4*c[2]*c[4]+1/12*c[3]^2; 
P6:=c[1]*c[4]^2+c[2]^2*c[5]-c[2]*c[3]*c[4]+8/27*c[3]^3;
\end{verbatim} 
We note that the lines $L_{1},\,L_{2}$ cannot contain the points
$p_{2},\,p_{3}:$

\begin{verbatim}
D3:=Evaluate(L1,[0,1,1]); 
D4:=Evaluate(L1,[1,0,1]); 
D5:=Evaluate(L2,[0,1,1]); 
D6:=Evaluate(L2,[1,0,1]); 
\end{verbatim}
Also the line $L_{i}$ cannot contain the point $p_{1},$ $i=1,2:$

\begin{verbatim} 
D7:=(m-d1)*(m-d2); 
\end{verbatim} 
And it is clear that the following must be non-zero: 
\begin{verbatim}
D8:=a*q1*q2*(q1-q2);
 \end{verbatim}
Finally we define a scheme with all these conditions. 

\begin{verbatim} 
A:=AffineSpace(R); 
S:=Scheme(A,[P1,P2,P3,P4,P5,P6,1+n*D1*D2*D3*D4*D5*D6*D7*D8]); 
\end{verbatim} 
We compute (that takes a few hours):
\begin{verbatim}
PrimeComponents(S); 
\end{verbatim}  
and get the unique solution $a=-8.$
\end{proof}

From the equation of the quartic $Q=M_{\PP^2}$, one can compute a degree $24$ equation for the mirror 
$M_{48}$, which is: 

\footnotesize
\begin{verbatim}
(31072410*r+44060139)*x^24+(599304420*r-4660302600)*x^21*y+(-106415505000*r+18054913500)*x^18
*y^2+(796474485000*r+3638808225000)*x^15*y^3+(-27123660*r-18697014)*x^16*z+(34521715125000
*r-31210968093750)*x^12*y^4+(107726220*r+2948918400)*x^13*y*z+(-257483985484500*r-
516632817969000)*x^9*y^5+(42798843000*r-32351244300)*x^10*y^2*z+(-1747212737190000*r
+3228789525752500)*x^6*y^6+(-407331396000*r-935091495000)*x^7*y^3*z+(-655139025450000*r+
10855982580975000)*x^3*y^7+(7724970*r-2222037)*x^8*z^2+(-3383703150000*r+9052448883750)
*x^4*y^4*z+(1544666220033750*r+11942493993804375)*y^8+(-102498120*r-465161400)*x^5*y*z^2+
(-319463676000*r+12613760073000)*x*y^5*z+(-2705586000*r+7086771600)*x^2*y^2*z^2+(-712080*r
+1186268)*z^3=0
\end{verbatim}
\normalsize
where $r=\sqrt{-2}$.

\subsection{A configuration of four plane conics related to the orbifold ball quotient}
In this subsection we describe the configuration of conics which we announced in the introduction.

Let us consider a conic tangent to two lines of a triangle in $\PP^2$, and going
through two points of the remaining line. Performing a Cremona transformation
at the three vertices of the triangle one obtains a quartic curve in
$\PP^{2}$ with singularities $\frak{a}_{1}+2\frak{a}_{2}$. Conversely,
starting with such a quartic, its image by the Cremona transform at
the three singularities is a conic with three lines having
the above configuration. 

Thus we consider the Cremona transform $\varphi$ at the three singularities 
of the quartic $M_{\PP^2}$. Let  $D_{1},\dots,D_{4}$  be respectively the images of $M_{\PP^2}$, 
the line $L_0$ through the node and the two residual points of the flex lines,
and the two flex lines. Using Magma, we see that these are $4$ conics meeting in $10$ points, as follows:
\begin{center}
\begin{tabular}{|c|c|c|c|c|c|c|c|c|c|c|}
\hline 
 & $q_{1}$ & $q_{2}$ & $q_{3}$ & $q_{4}$ & $q_{5}$ & $q_{6}$ & $q_{7}$ & $q_{8}$ & $q_{9}$ & $q_{10}$\tabularnewline
\hline 
$D_{1}$ & 1+ & 1+ & 0 & 0 & 0 & 0 & 1 & 1 & 1 & 1\tabularnewline
\hline 
$D_{2}$ & 1 & 1 & 1 & 1 & 1 & 0 & 0 & 0 & 1 & 1\tabularnewline
\hline 
$D_{3}$ & 0 & 1+ & 1 & 1 & 1 & 1 & 1 & 0 & 0 & 0\tabularnewline
\hline 
$D_{4}$ & 1+ & 0 & 1 & 1 & 1 & 1 & 0 & 1 & 0 & 0\tabularnewline
\hline 
\end{tabular} 
\end{center}
\vspace{1mm}
Here two $+$ in the column of $q_{j}$ mean that the two curves meet with
multiplicity $3$ at point $q_{i}$. The other intersections are transverse.
We see that the various ball-quotient orbifolds that Deraux described in \cite{Deraux} may be obtained from a configuration of conics by performing birational transformations.

\section{One further quotient by an involution }

\subsection{The quotient morphism $\PP^1\times \PP^1\to \PP^2$, image of the mirror as the cuspidal cubic}
Consider the plane quartic curve $Q$ from Theorem \ref{thm:unicity of quartic}. Here we show the existence of a birational map
$$\rho:\mathbb P^2\dashrightarrow\mathbb P^1\times\mathbb P^1\subset\mathbb P^3$$
and an involution $\sigma$ on $\mathbb P^1\times\mathbb P^1$ that preserves $\rho(Q)$ and fixes the diagonal $D$ of
$\mathbb P^1\times\mathbb P^1$ pointwise.
Moreover, we have $\left( \mathbb P^1\times \mathbb P^1\right)/\sigma=\mathbb P^2,$ and
the images $C_u,\, C_o$ of $\rho(Q),\, D$ are curves of degrees $3,2,$ respectively.
The curve $C_u$ has a cusp singularity and intersects $C_o$ at three points, with intersection multiplicities $4,1,1.$ The map $\rho$ is the inverse of the birational transform $\PP^1\times \PP^1\dashrightarrow \PP^2$ described in sub-section~\ref{subsec:blabla2}, whose indeterminacy is at the singularity $\mathfrak{a}_3$ of $M_{\PP^1\times \PP^1}.$

\begin{verbatim}
K:=Rationals();
R<r>:=PolynomialRing(K);
K<r>:=ext<K|r^2+2>;
P2<x,y,z>:=ProjectiveSpace(K,2);
Q:=Curve(P2,(x^2+x*y+y^2-x*z-y*z)^2-8*x*y*(x+y-z)^2);
p6:=P2![2*r,-2*r,1];
p7:=P2![-2*r,2*r,1];
\end{verbatim}
We compute the linear system of conics through the cuspidal points $p_2,\,p_3$ and take
the corresponding map to $\mathbb P^3.$

\begin{verbatim}
L:=LinearSystem(LinearSystem(P2,2),[p6,p7]);
P3<a,b,c,d>:=ProjectiveSpace(K,3);
rho:=map<P2->P3|Sections(L)>;
\end{verbatim}
The image of $\mathbb P^2$ is a quadric surface $Q_2$ ($\cong\mathbb P^1\times\mathbb P^1$).

\begin{verbatim}
Q2:=rho(P2);Q2;
C:=rho(Q);C;
\end{verbatim}
There is an involution preserving both $Q_2$ and the curve $C:=\rho(Q).$

\begin{verbatim}
sigma:=map<P3->P3|[d,b,c,a]>;
C:=rho(Q);C;
sigma(Q2) eq Q2;
sigma(C) eq C;
\end{verbatim}
We compute the corresponding map to the quotient. The image of $C$ is a cubic curve, and the image of the diagonal is a conic.

\begin{verbatim}
psi:=map<P3->P2|[a+d,b,c]>;
Cu:=psi(C);
Co:=psi(Scheme(rho(P2),[a-d]));
Co:=Curve(P2,DefiningEquations(Co));
\end{verbatim}
The curve $C_u$ has a cusp singularity:

\begin{verbatim}
pts:=SingularPoints(Cu);
ResolutionGraph(Cu,pts[1]);
\end{verbatim}
The intersections of $C_o$ and $C_u:$

\begin{verbatim}
Degree(ReducedSubscheme(Co meet Cu)) eq 3;
pt:=Points(Co meet Cu)[1];
IntersectionNumber(Co,Cu,pt) eq 4;
\end{verbatim}

Let $C_1 ', C_2' $ be the fibers that intersect $M_{\PP^1 \times \PP^1}$ each at a unique point 
with multiplicity $3$.These fibers are exchanged by the involution $\sigma$ 
and are sent to a line $F_l$ which cuts the cubic curve $C_u$ at a unique point: this is a flex line. 
That line $F_l$ also cuts the conic $C_o$ at a unique point.

Conversely, let us start from the data of a conic $C_o$  and a cuspidal cubic $C_u$ 
intersecting as above, with the flex line (at the smooth flex point) of the cubic tangent to the conic. 
One can take the double cover of the plane branched over $C_o$, which is $\PP^1 \times \PP^1$. 
The pull-back of $C_u$ is then a curve satisfying the properties of Theorem \ref{thm:unicity of quartic}, 
thus the configuration $(C_o,C_u)$ we described is unique in $\PP^2$, up to projective automorphisms.

\subsection{An orbifold  ball-quotient structure from $(\PP^2,(C_o,C_u))$}

Let $C_{u}\hookrightarrow\PP^{2}$ be the unique plane cuspidal curve and let $c_1$ be its cuspidal point. 
Let $F_{l}$ be the flex line through the unique smooth flex point $c_2$ of $C_u$. 
By the previous subsection, one has the following result:
\begin{prop}
There exists a unique conic $C_{o}\hookrightarrow\PP^{2}$ such that
the following holds:\\
i) $F_{l}$  is tangent to $C_{o}$;\\
ii) $C_{o}$ cuts $C_{u}$ at points $c_{3},c_{4},c_{5}$ ($\neq c_{1},c_{2}$)
 with intersection multiplicities $4,1,1$, respectively.
\end{prop}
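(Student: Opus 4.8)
The plan is to prove existence and uniqueness of the conic $C_o$ by reducing everything to the configuration already established in Theorem \ref{thm:unicity of quartic} via the double cover construction. The cuspidal cubic $C_u$ with its flex line $F_l$ is rigid: up to projective automorphism there is a unique plane cuspidal cubic, and its smooth flex point together with the flex line are determined by $C_u$. So I would first fix $C_u$ in a normal form (for instance $y^2z=x^3$), locate its cusp $c_1$ and its unique smooth inflection point $c_2$, and write down the flex line $F_l$ explicitly. This fixes all the data appearing in conditions (i)--(ii) except the conic $C_o$ itself.

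Next I would produce $C_o$ by the branched-cover duality described in the previous subsection. Taking the double cover of $\PP^2$ branched along a smooth conic yields $\PP^1\times\PP^1$, and the involution $\sigma$ is the deck transformation. The key observation is that the quotient data $(\PP^2,(C_o,C_u))$ lifts to the $\sigma$-invariant configuration $(\PP^1\times\PP^1, \rho(Q))$ whose quotient-downstairs we have just computed, and conversely the pullback of $C_u$ under the double cover branched over $C_o$ is a $(3,3)$-curve whose further birational transform is the quartic $Q$ of Theorem \ref{thm:unicity of quartic}. Thus specifying that $C_o$ be smooth, tangent to $F_l$, and meeting $C_u$ with the prescribed intersection profile $4,1,1$ is equivalent, after pullback, to prescribing the node, the two flex points and the flex-line incidences of a plane quartic, and the latter configuration was shown to be unique up to projective equivalence. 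I would then invoke the corollary that $M_{\PP^2}$ is projectively equivalent to $Q$ to transport uniqueness back downstairs.

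For existence I would simply exhibit $C_o$: it is the image under $\psi$ of the diagonal $D$ of $\PP^1\times\PP^1$, which the Magma computation of the previous subsection already identifies as a conic meeting $C_u=\psi(\rho(Q))$ at three points with multiplicities $4,1,1$, and tangency of $F_l$ to $C_o$ follows from the remark that the two special fibers $C_1',C_2'$ (exchanged by $\sigma$, each tangent to order $3$) descend to the flex line $F_l$, which meets $C_o$ at a single point. The content to verify is that the intersection multiplicity $4$ occurs at a point distinct from $c_1,c_2$ and that the residual intersections are transverse and also avoid $c_1,c_2$; these are local checks at the three points $c_3,c_4,c_5$ that the computation already records.

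The main obstacle will be the uniqueness half, specifically ensuring that the correspondence between downstairs configurations $(C_o,C_u)$ and upstairs quartics is genuinely bijective rather than merely surjective. One must check that the double cover branched over a conic $C_o$ satisfying (i)--(ii) produces \emph{exactly} the quartic configuration of Theorem \ref{thm:unicity of quartic}, with the node, two cusps and both flex incidences all accounted for, so that no spurious families of conics are lost. Concretely this amounts to tracking how the intersection profile $4,1,1$ and the flex-line tangency split under the degree-$2$ branched cover into the singularity type $\mathfrak a_1+2\mathfrak a_2$ together with the two flexes and the incidence of the residual flex points with the line $L_0$; once this dictionary is verified, uniqueness of $C_o$ is inherited directly from the uniqueness of $Q$.
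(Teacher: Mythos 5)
Your proposal is correct and follows essentially the same route as the paper: existence comes from the explicit quotient construction (the diagonal $D$ descending to $C_o$, the fibers $C_1',C_2'$ descending to $F_l$, with the intersection profile checked by the Magma computation), and uniqueness comes from reversing the double cover branched over $C_o$, pulling back $C_u$ and reducing to the unicity statement of Theorem \ref{thm:unicity of quartic}. The ``dictionary'' you flag as the main obstacle is exactly the step the paper leaves implicit, so spelling it out would only strengthen the argument.
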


In this subsection we prove that there is a natural birational transformation $W\dashrightarrow \PP^2$
 such that together with the strict transform of the curves $C_o$ and $C_u$ one gets 
an orbifold ball quotient surface. 
For definitions and results on orbifold theory, we use \cite{CC,DM2} and \cite{Ulu}.

Let us blow-up over points $c_{1},c_{2},c_{3}$ and then contract
some divisors as follows (for a pictural description see figure \ref{fig:ObiW}):

We blow up over $c_{1}$ three times, the first blow-up resolves the
cusp of $C_{u}$ and the exceptional divisor intersects the strict
transform of $C_{u}$ tangentially, the second blow-up is at that
point of tangency and the third blow-up separates the strict transforms
of the first exceptional divisor and the curve $C_{u}$. One obtains
in that way a chain of $(-3)$, $(-1)$ and $(-2)$-curves. We then
contract the $(-2)$ and $(-3)$-curves obtaining in that way singularities
$A_{1}$ and $\frac{1}{3}(1,1)$. The image of the $(-1)$-curve by
that contraction map is denoted by $H$. As an orbifold we put multiplicity
$2$ on $H$. 

 We blow up over $c_{2}$ (the flex point) three times in order that the
strict transform of the curves $F_{l}$ and $C_{u}$ get separated
over $c_{2}$. We obtain in that way a chain of $(-1$), $(-2)$,
$(-2)$-curves. We then contract the two $(-2)$-curves and obtain
an $A_{2}$-singularity. The strict transform of the line $F_{l}$
is a $(-2)$-curve, which we also contract, obtaining in that way
an $A_{1}$-singularity. The contracted curve being tangent to $\tilde C_0$, 
the image $\bar C_0$ has a cusp $\mathfrak{a}_2$ at the singularity $A_1$.

We moreover blow up over $c_{3}$ four times, in order that the strict transform
of the curves $C_{o}$ and $C_{u}$ get separated over $c_{3}$. We
obtain in that way a chain of $(-1$), $(-2)$, $(-2)$, $(-2)$-curves.
We then contract the three $(-2)$-curves and obtain an $A_{3}$-singularity.
The image of the $(-1)$-curve by the contraction map is a curve denoted
by $F_{d}$, we give the weight $2$ to that curve.

\begin{figure}[h]
\caption{\label{fig:ObiW}The plane, the surfaces $Z$ and $W$}
\includegraphics[scale=0.09]{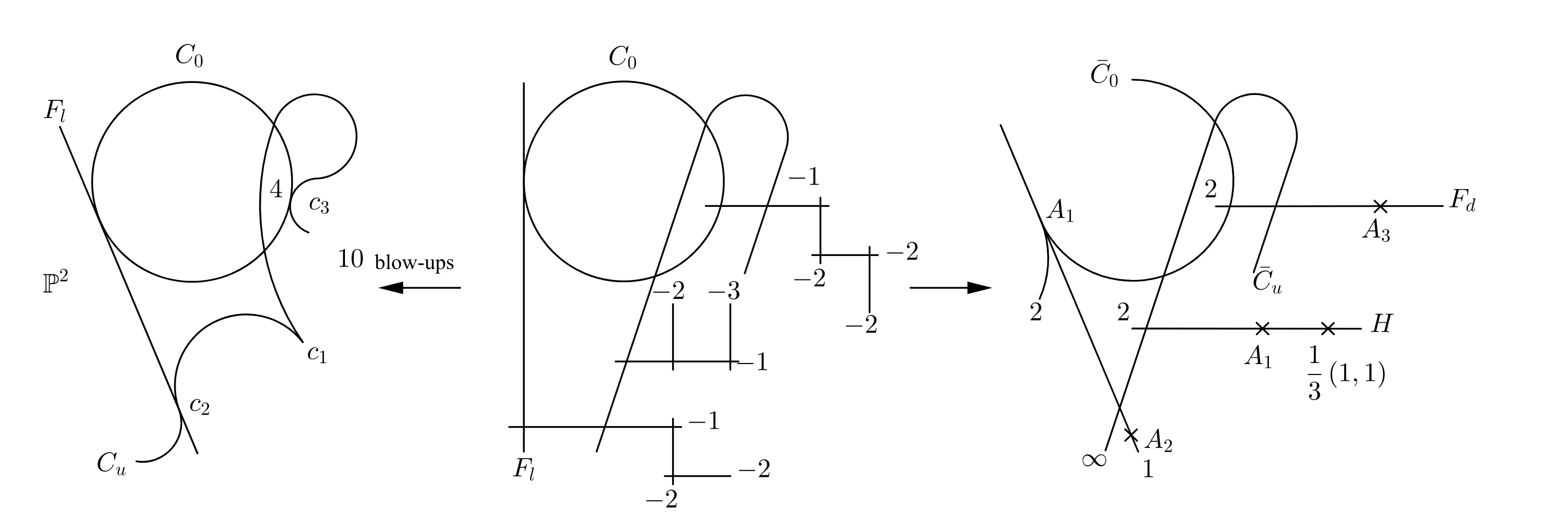}
\end{figure}

Let us denote by $W$ the resulting surface. For a curve $D$ on $\PP^2$, we denote 
by $\tilde D$ its strict transform  on $W$. Let $\mathcal{W}$ be
the orbifold with same subjacent topological space, with divisorial
part: 
\[
\Delta=\Bigl(1-\frac{1}{\infty}\Bigr)\bar C_{u}+\Bigl(1-\frac{1}{2}\Bigr)\left(\bar C_{o}+F_{d}+H\right).
\]
The singular points of $W$ are 
\[
A_{1}+A_{1}+A_{2}+A_{3}+\frac{1}{3}(1,1),
\]
and they have an isotropy $\beta$ of order $16,4,3,8,6$  respectively, for $\mathcal{W}$. 
The computation of the isotropy is immediate, except for the first point (that we shall denote by $r_1$), which is also a cusp 
on the curve $\bar C_0$ (which has weight $2$). 
Let  $SD_{16}$ be the  the  semidihedral group of order $16$, generated by the matrices $$ 
g_1=\begin{pmatrix} 
0 & -\zeta \\
-\zeta^3 & 0 
\end{pmatrix},\,  
g_2=\begin{pmatrix} 
0 & 1 \\
1 & 0 
\end{pmatrix},$$
 where $ \zeta $ is a primitive $8$th root of unity. The order $2$ elements $g_2,\,g_1^{-1}g_2g_1$ generate an order $8$ reflection group $D_4$.
 The quotient of $\CC^2$ by $SD_{16}$ has a $A_1$ singularity 
 and one computes that the image of the $4$ mirrors of $D_4$ is a curve
 with a cusp $\mathfrak{a}_2$ at the $A_1$ singularity of $\CC^2/SD_{16}$.
 The isotropy group of the point $r_1$ in the orbifold is therefore the semidihedral group $SD_{16}$ of order $16$. 
The following proposition is an application of the main result of \cite{KNS}:
\begin{prop}
The Chern numbers of the orbifold $\mathcal{W}=(W,\Delta)$ satisfy
\[
c_{1}^{2}(\mathcal{W})=3c_{2}(\mathcal{W})=\frac{9}{16},
\]
in particular $\mathcal{W}$ is an orbifold ball quotient. 
\end{prop}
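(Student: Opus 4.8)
The plan is to compute the orbifold Chern numbers directly from their defining formulas, using the explicit geometric data assembled throughout the section. The main result of \cite{KNS} expresses $c_1^2(\mathcal{W})$ and $c_2(\mathcal{W})$ as a sum of a ``global'' contribution coming from $(W,\Delta)$ viewed as a log-surface together with ``local'' corrections at each singular point of $W$ and along the divisorial part $\Delta$. Thus the computation splits into two independent pieces: the global intersection-theoretic part on $W$, and the sum of the local orbifold contributions at the five singular points $r_1,\dots$ with isotropy orders $16,4,3,8,6$.

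First I would fix an explicit basis of the N\'eron--Severi group of $W$. Since $W$ is obtained from $\PP^2$ by the sequence of blow-ups over $c_1,c_2,c_3$ (three, three and four times respectively) followed by the contractions described above, I would track the classes of the strict transforms $\bar C_u$, $\bar C_o$, $F_d$, $H$ in terms of the pullback of a line and the exceptional divisors, exactly as the intersection tables in Remark \ref{rem:identifBaseNS} were assembled. From these I would read off the self-intersections and mutual intersections of the components of $\Delta$, and compute $K_W$ by adjunction on each rational component (together with the canonical-divisor contributions of the $A_1,A_1,A_2,A_3,\frac{1}{3}(1,1)$ singularities, as was done for $A/G_{48}$ in the second section). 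This yields the log-canonical class $K_W+\Delta_{\mathrm{red}}$ and hence the global part of both Chern numbers once the weights $\tfrac12$ on $\bar C_o,F_d,H$ and the weight $1$ (i.e. $\infty$) on $\bar C_u$ are inserted.

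Next I would evaluate the local contributions. Four of the five singular points are cyclic quotient singularities ($A_1,A_2,A_3,\frac{1}{3}(1,1)$) meeting the components of $\Delta$ transversally or tangentially in a controlled way, so their local orbifold invariants are standard and can be read off from the isotropy orders $4,3,8,6$; these are the easy cases. The delicate point, and the one I expect to be the main obstacle, is the vertex $r_1$: there the underlying singularity is $A_1$, the curve $\bar C_u$ has weight $\infty$, and the curve $\bar C_o$ passes through $r_1$ with a cusp $\mathfrak{a}_2$ and weight $2$, so the local isotropy is not cyclic but the semidihedral group $SD_{16}$. I would justify, via the explicit $SD_{16}$-action generated by $g_1,g_2$ and the computation that the image of the four $D_4$-mirrors is a cuspidal curve matching $\bar C_o$, that the local orbifold near $r_1$ is precisely $\CC^2/SD_{16}$ with the prescribed divisorial structure; the correct local Chern contributions are then those of this ball-quotient orbifold germ, which one extracts from the $SD_{16}$-character data exactly as in \cite{KNS}.

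Finally I would sum the global part and the five local parts for $c_1^2$ and for $c_2$ separately. The arithmetic is routine once the pieces are in hand, and the content of the proposition is that both totals come out to $\tfrac{9}{16}$ and $\tfrac{3}{16}$ so that the proportionality $c_1^2(\mathcal{W})=3c_2(\mathcal{W})$ holds. By the main theorem of \cite{KNS} (an orbifold version of the Bogomolov--Miyaoka--Yau equality), equality in the proportionality inequality forces $\mathcal{W}$ to be uniformized by the ball, giving the orbifold ball-quotient structure. The hard part is therefore entirely concentrated in correctly identifying and evaluating the non-abelian $SD_{16}$ local contribution at $r_1$; the rest is bookkeeping in $\mathrm{NS}(W)$ and standard cyclic-quotient local invariants.
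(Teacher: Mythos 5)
Your proposal is correct and follows essentially the same route as the paper: compute $c_{1}^{2}(\mathcal{W})=(K_{W}+\Delta)^{2}$ by intersection theory on the normal surface $W$ (pulling back to the resolution and using the discrepancies of the $ADE$ and $\frac{1}{3}(1,1)$ points), compute $c_{2}(\mathcal{W})$ from the Euler number of $W$ with the standard corrections along $\Delta$ and at the isotropy points of orders $16,4,3,8,6$, and invoke the Kobayashi--Nakamura--Sakai criterion to conclude. You also correctly isolate the one genuinely delicate input, the semidihedral $SD_{16}$ isotropy at $r_{1}$, which in the actual computation enters only through its order $16$ in the term $1-\frac{1}{16}$ of the $c_{2}$ formula.
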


\begin{proof}
Let us compute the orbifold second Chern number of $\mathcal{W}$.
We have (see e.g. \cite{RR}):
\[
\begin{array}{c}
c_{2}(\mathcal{W})=e(W)-\left((1-\frac{1}{\infty})e(\bar C_{u}\setminus S)+(1-\frac{1}{2})e(\bar C_{o}\setminus S)+\right.\\
\left.+(1-\frac{1}{2})e(F_{d}\setminus S)+(1-\frac{1}{2})e(H\setminus S)\right)-\sum_{p\in S}\left(1-\frac{1}{\beta(p)}\right),
\end{array}
\]
where $S$ is the union of the singular points of $W$ with the singular points of the round-up divisor
$\left\lceil \Delta\right\rceil $, and where moreover $\beta(p)$
is the isotropy order of the point $p$, so that for example for $p$
on $\bar C_{u}$, $\beta(p)=\infty$ and the unique point $p$ in $F_{d}$
and $\bar C_{o}$ has $\beta(p)=4$. Since we have blown-up $\PP^{2}$ over
$10$ points and we have contracted $8$ rational curves, we get
\[
e(W)=3+10-8=5.
\]
We obtain
\[
\begin{array}{c}
c_{2}(\mathcal{W})=5-\left((2-4)+\dfrac{1}{2}(2-4)+\dfrac{1}{2}(2-3)+\dfrac{1}{2}(2-3)\right)\\
-\left(10-\dfrac{1}{16}-\dfrac{1}{4}-\dfrac{1}{3}-\dfrac{1}{8}-\dfrac{1}{6}-\dfrac{1}{4}-4\cdot\dfrac{1}{\infty}\right),
\end{array}
\]
thus $c_{2}(\mathcal{W})=\frac{3}{16}$. 

Let us compute $c_{1}^{2}(\mathcal{W})$.
One has 
\[
c_{1}^{2}(\mathcal{W})=(K_{W}+\Delta)^{2},
\]
so that 
\[
\begin{array}{c}
c_{1}^{2}(\mathcal{W})=K_{W}^{2}+2K_{W}\bar C_{u}+K_{W}(\bar C_{o}+F_{d}+H)
+\frac{1}{4}(\bar C_{o}^{2}+F_{d}^{2}+H^{2})+\bar C_{u}^{2}\\[\medskipamount]
+\bar C_{u}(\bar C_{o}+F_{d}+H)+\frac{1}{2}(\bar C_{o}F_{d}+\bar C_{o}H+F_{d}H).
\end{array}
\]
Let $p:Z\to W$ be the surface above $W$ which resolves $W$ and
is a blow-up of $\PP^{2}$. Since $Z$ is obtained by $10$ blow-ups
of $\PP^{2}$ one has $K_{Z}^{2}=9-10=-1$. Moreover, since all singularities
but one are $ADE$, one has $K_{Z}=p^{*}K_{W}-\frac{1}{3}D_{1}$ where
$D_{1}$ is the $(-3)$-curve on $Z$ which is contracted to the $\frac{1}{3}(1,1)$
singularity on $W$. Since $p^{*}K_{W}\cdot D_{1}=0,$ we obtain
\[
K_{W}^{2}=-\frac{2}{3}.
\]
The curve $\bar C_{u}$ is a smooth curve of genus $0$ on the smooth locus
of $W$. The blow-up at the $\mathfrak{a}_{2}$-singularity of the
cuspidal cubic decreases the self-intersection by $4$, the remaining
blow-ups decrease the self-intersection by $1$. Since one has $4+2+3=9$
such blow-ups, one gets 
\[
\bar C_{u}^{2}=3^{2}-4-9=-4,
\]
and therefore $K_{W}\bar C_{u}=2$. Let $\tilde{D}$ be the strict transform
on $Z$ of a curve $D$ on $W$ or $\PP^2$. We have
\[
\tilde{C_{o}}=p^{*} \bar C_{o}-aF_{l}.
\]
Since $\tilde{C_{o}}F_{l}=2,$ then $a$ is equal to $1$. Since moreover $\tilde{C_{o}}^{2}=0,$
we get $0=(\tilde{C_{o}})^{2}=\bar C_{o}^{2}-2,$ thus $\bar C_{o}^{2}=2$.
We have
\[
K_{W}\bar C_{o}=(\tilde{C_{o}}+F_{l})\left(K_{W}+\frac{1}{3}D_{1}\right)=-2.
\]
Let $F_{1},F_{2},F_{3}\subset Z$ be the chain of three $(-2)$-curves above
the $A_{3}$ singularity in $W$, so that $\tilde{F_{d}}F_{1}=1$. One
computes that 
\[
\tilde{F_{d}}=p^{*}F_{d}-\frac{1}{4}\left(3F_{1}+2F_{2}+F_{3}\right)
\]
(it is easy to check that $\tilde{F}_{d}F_{1}=1,\,\tilde{F}_{d}F_{2}=\tilde{F}_{d}F_{3}=0$).
Then 
\[
-1=\tilde{F}_{d}^{2}=F_{d}^{2}-\frac{3}{4}
\]
 gives $F_{d}^{2}=-\frac{1}{4}$. One has
\[
K_{W}F_{d}=\left(K_{Z}+\dfrac{1}{3}D_{1}\right)\left(\tilde{F_{d}}+\frac{1}{4}(3F_{1}+2F_{2}+F_{3})\right)=-1.
\]
Let $D_{1},D_{2}$ be respectively the $(-3)$ and $(-2)$ curves
intersecting $\tilde{H}$. Since $\tilde{H}D_{1}=\tilde{H}D_{2}=1$, one
has
\[
\tilde{H}=p^{*}H-\frac{1}{3}D_{1}-\frac{1}{2}D_{2},
\]
thus 
\[
-1=\tilde{H}^{2}=H^{2}-\frac{1}{3}-\frac{1}{2}
\]
and $H^{2}=-\frac{1}{6}$. Moreover
\[
K_{W}H=\left(K_{Z}+\dfrac{1}{3}D_{1}\right)\left(\tilde{H}+\dfrac{1}{3}D_{1}+\dfrac{1}{2}D_{2}\right)=-1+\dfrac{1}{3}+\dfrac{1}{3}-\dfrac{1}{3}=-\dfrac{2}{3}.
\]
We compute therefore 
$$
\begin{array}{c}
c_{1}^{2}\left(\mathcal{W}\right)=-\dfrac{2}{3}+2\cdot2+\left(-2-1-\dfrac{2}{3}\right)+\dfrac{1}{4}\left(2-\dfrac{1}{4}-\dfrac{1}{6}\right)-4\\
+\left(2+1+1\right)+\dfrac{1}{2}\left(1+0+0\right)=\dfrac{9}{16},
\end{array}
$$
thus $c_{1}^{2}(\mathcal{W})=3c_{2}(\mathcal{W})=\dfrac{9}{16}$.
\end{proof}

\begin{rem}
In \cite{Deraux}, Deraux obtains $4$ different orbifold ball-quotient structures 
on surfaces birational to $A/G_{48}$. Among these, only the fourth one, $W'$, is invariant 
by the involution $\sigma$, the obstruction being the divisor $E$ in  \cite{Deraux} which creates an asymetry, unless it has weight $1$. The orbifold $\mathcal{W}$ we just described can be seen as the quotient 
of $W'$ by the involution $\sigma$. 
\end{rem}

\vspace{0.2cm} 

\noindent Vincent Koziarz 
\\Univ. Bordeaux, IMB, CNRS, UMR 5251, F-33400 Talence, France
\\{\tt vincent.koziarz@math.u-bordeaux.fr}\\

\noindent Carlos Rito
\vspace{0.1cm}
\\{\it Permanent address:}
\\ Universidade de Tr\'as-os-Montes e Alto Douro, UTAD
\\ Quinta de Prados
\\ 5000-801 Vila Real, Portugal
\\ www.utad.pt, {\tt crito@utad.pt}
\vspace{0.1cm}
\\{\it Temporary address:}
\\ Departamento de Matem\' atica
\\ Faculdade de Ci\^encias da Universidade do Porto
\\ Rua do Campo Alegre 687
\\ 4169-007 Porto, Portugal
\\ www.fc.up.pt, {\tt crito@fc.up.pt}\\

\vspace{0.1cm} 
\noindent Xavier Roulleau 
\\Aix-Marseille Universit\'e, CNRS, Centrale Marseille, I2M UMR 7373,  
\\13453 Marseille, France
\\ {\tt Xavier.Roulleau@univ-amu.fr}

\end{document}